\newtheorem{theorem}{Theorem}[section]
\newtheorem{proposition}[theorem]{Proposition}
\newtheorem{lemma}[theorem]{Lemma}
\newtheorem{corollary}[theorem]{Corollary}
\theoremstyle{definition}
\newtheorem{definition}[theorem]{Definition}
\newtheorem{example}[theorem]{Example}
\theoremstyle{remark}
\newtheorem{remark}[theorem]{Remark}
\numberwithin{equation}{section} 
\def\BMO{\operatorname{BMO}}
\def\Re{\operatorname{Re}}
\def\inf{\operatornamewithlimits{inf\vphantom{p}}}
\def\loc{\operatorname{loc}}
\def\osc{\operatorname{osc}}
\def\Tr{\operatorname{Tr}}
\newcommand{\norm}[1]{\left\lVert#1\right\rVert}
\newcommand\restr[2]{\ensuremath{\left.#1\right|_{#2}}}
\begin{document}

\title[Generalized H\"{o}lder and Morrey-Campanato Dirichlet problems in $\mathbb{R}^n_{+}$]{The generalized H\"{o}lder and Morrey-Campanato Dirichlet problems for elliptic systems in the upper-half space}

\author{Juan Jos\'{e} Mar\'{i}n}
\address{Juan Jos\'{e} Mar\'{i}n
\\
Instituto de Ciencias Matem\'aticas CSIC-UAM-UC3M-UCM
\\
Consejo Superior de Investigaciones Cient{\'\i}ficas
\\
C/ Nicol\'as Cabrera, 13-15
\\
E-28049 Madrid, Spain} \email{juanjose.marin@icmat.es}

\author{Jos\'e Mar{\'\i}a Martell}
\address{Jos\'e Mar{\'\i}a Martell
\\
Instituto de Ciencias Matem\'aticas CSIC-UAM-UC3M-UCM
\\
Consejo Superior de Investigaciones Cient{\'\i}ficas
\\
C/ Nicol\'as Cabrera, 13-15
\\
E-28049 Madrid, Spain} \email{chema.martell@icmat.es}

\author{Marius Mitrea}
\address{Marius Mitrea
\\
Department of Mathematics
\\
Baylor University 
\\
Waco, TX 76706, USA} 
\email{Marius$\underline{\,\,\,}$\,Mitrea@baylor.edu}

\thanks{The first and second authors acknowledge financial support from the Spanish Ministry of Economy and Competitiveness, through the ``Severo Ochoa Programme for Centres of Excellence in R\&D'' (SEV-2015-0554). They also acknowledge that
the research leading to these results has received funding from the European Research
Council under the European Union's Seventh Framework Programme (FP7/2007-2013)/ ERC
agreement no. 615112 HAPDEGMT. The last author has been
supported in part by the Simons Foundation grant $\#\,$281566.}

\date{April 20, 2018. \textit{Revised:} \today.}

\subjclass[2010]{Primary: 35B65, 35C15, 35J47, 35J57, 35J67, 42B37. 
Secondary: 35E99, 42B35.}

\keywords{Generalized H\"older space, generalized Morrey-Campanato  space, Dirichlet problem in the upper-half space, second order elliptic system, Poisson kernel, Lam\'e system,  nontangential pointwise trace, Fatou type theorem}

\begin{abstract}
We prove well-posedness results for the Dirichlet problem in $\mathbb{R}^{n}_{+}$ for homogeneous, 
second order, constant complex coefficient elliptic systems with boundary data in generalized 
H\"older spaces $\mathscr{C}^{\omega}(\mathbb{R}^{n-1},\mathbb{C}^M)$ and in generalized Morrey-Campanato 
spaces $\mathscr{E}^{\omega,p}(\mathbb{R}^{n-1},\mathbb{C}^M)$ under certain assumptions on the growth 
function $\omega$. We also identify a class of growth functions $\omega$ for which 
$\mathscr{C}^{\omega}(\mathbb{R}^{n-1},\mathbb{C}^M)=\mathscr{E}^{\omega,p}(\mathbb{R}^{n-1},\mathbb{C}^M)$ 
and for which the aforementioned well-posedness results are equivalent, in the sense that they have 
the same unique solution, satisfying natural regularity properties and estimates.

\end{abstract}

\maketitle
\tableofcontents

\section{Introduction}

This paper is devoted to studying the Dirichlet problem for elliptic systems in 
the upper-half space with data in generalized H\"{o}lder spaces and generalized 
Morrey-Campa\-nato spaces. As a byproduct of the PDE-based techniques developed here, we are able
to establish the equivalence of these function spaces. To be more specific requires introducing some notation. 

Having fixed $n,M\in{\mathbb{N}}$ with $n\geq 2$ and $M\geq 1$, consider a homogeneous, 
constant (complex) coefficient, $M\times M$ second-order system in ${\mathbb{R}}^n$
\begin{equation}\label{ars}
L:=\left(a_{jk}^{\alpha\beta}\partial_j\partial_k\right)_{1\leq\alpha,\beta\leq M}.
\end{equation} 
Here and elsewhere, the summation convention over repeated indices is employed. 
We make the standing assumption that $L$ is strongly elliptic, in the sense that 
there exists $\kappa_0>0$ such that the following Legendre-Hadamard condition is satisfied: 
\begin{equation}\label{LegHad} 
\begin{array}{c}
\Re\left[a_{jk}^{\alpha\beta}\xi_j\xi_k\overline{\zeta_\alpha}\zeta_\beta\right]
\geq\kappa_0\left|\xi\right|^2\left|\zeta\right|^2\,\,\text{ for all}
\\[10pt]
\xi=\left(\xi_j\right)_{1\leq j\leq n}\in\mathbb{R}^n
\,\,\text{ and }\,\,\zeta=\left(\zeta_\alpha\right)_{1\leq\alpha\leq M}\in\mathbb{C}^M.
\end{array}
\end{equation}
Examples include scalar operators, such as the Laplacian $\Delta=\sum\limits_{j=1}^n\partial_j^2$ 
or, more generally, operators of the form ${\rm div}A\nabla$ with $A=(a_{rs})_{1\leq r,s\leq n}$ an 
$n\times n$ matrix with complex entries satisfying the ellipticity condition
\begin{equation}\label{YUjhv-753}
\inf_{\xi\in S^{n-1}}{\rm Re}\,\big[a_{rs}\xi_r\xi_s\bigr]>0,
\end{equation}
(where $S^{n-1}$ denotes the unit sphere in ${\mathbb{R}}^n$), as well as the complex
version of the Lam\'e system of elasticity in $\mathbb{R}^n$, 
\begin{equation}\label{TYd-YG-76g}
L:=\mu\Delta+(\lambda+\mu)\nabla{\rm div}.
\end{equation}
Above, the constants $\lambda,\mu\in{\mathbb{C}}$ (typically called Lam\'e moduli),
are assumed to satisfy
\begin{equation}\label{Yfhv-8yg}
{\rm Re}\,\mu>0\,\,\mbox{ and }\,\,{\rm Re}\,(2\mu+\lambda)>0,
\end{equation}
a condition equivalent to the demand that the Lam\'e system \eqref{TYd-YG-76g} 
satisfies the Legendre-Hadamard ellipticity condition \eqref{LegHad}.
While the Lam\'e system is symmetric, we stress that the results in this paper require no symmetry 
for the systems involved. 

With each system $L$ as in \eqref{ars}-\eqref{LegHad} one may associate a Poisson kernel, $P^L$, 
which is a ${\mathbb{C}}^{M\times M}$-valued function defined in ${\mathbb{R}}^{n-1}$ described 
in detail in Theorem~\ref{PoissonConvolution}. This Poisson kernel has played a pivotal role in 
the treatment of the Dirichlet problem with data in $L^p$, $\mathrm{BMO}$, $\mathrm{VMO}$ and 
H\"older spaces (see \cite{K-MMMM, BMO-MarMitMitMit16}). For now, we make the observation that the 
Poisson kernel gives rise to a nice approximation to the identity in ${\mathbb{R}}^{n-1}$ by setting 
$P^L_t(x')=t^{1-n}P^L(x'/t)$ for every $x'\in{\mathbb{R}}^{n-1}$ and $t>0$.

For every point $x\in\mathbb{R}^n$ write $x=(x',t)$, where $x'\in\mathbb{R}^{n-1}$ corresponds to 
the first $n-1$ coordinates of $x$, and $t\in\mathbb{R}$ is the last coordinate of $x$. As is customary, 
we shall let $\mathbb{R}^n_{+}:=\{x=(x',t)\in\mathbb{R}^n:\,x'\in\mathbb{R}^{n-1},\,t>0\}$ denote the 
upper-half space in $\mathbb{R}^n$, and typically identify its boundary with $(n-1)$-dimensional Euclidean 
space, via $\partial\mathbb{R}^n_{+}\ni(x',0)\equiv x'\in\mathbb{R}^{n-1}$. The cone with vertex at 
$x'\in\mathbb{R}^{n-1}$ and aperture $\kappa>0$ is defined as 
\begin{equation}\label{64rrf}
\Gamma_{\kappa}(x'):=\{y=(y',t)\in\mathbb{R}^n_{+}:\,|x'-y'|<\kappa t\}.
\end{equation} 
When $\kappa=1$ we agree to drop the dependence on aperture and simply write $\Gamma(x')$. 
Whenever meaningful, the nontangential pointwise trace of a vector-valued function $u$ defined 
in $\mathbb{R}^n_{+}$ is given by 
\begin{equation}\label{6tfF}
\left(\restr{u}{\partial\mathbb{R}^n_{+}}^{{}^{\rm nt.lim}}\right)(x'):= 
\lim_{\substack{\mathbb{R}^n_+\ni y\to (x',0)\\ y\in\Gamma_\kappa (x')}}u(y),
\quad x'\in\mathbb{R}^{n-1}. 
\end{equation}
The unrestricted pointwise trace of a vector-valued function $u$ defined in 
$\mathbb{R}^n_+$ at each $x'\in\partial\mathbb{R}^n_+\equiv\mathbb{R}^{n-1}$ is taken to be 
\begin{equation}\label{tfDSq56f}
\left(\restr{u}{\partial\mathbb{R}^n_{+}}^{{}^{\rm lim}}\right)(x')
:=\lim_{\mathbb{R}^n_+\ni y\to(x',0)}u(y),\quad x'\in\mathbb{R}^{n-1},
\end{equation}
whenever such a limit exists exists. 

\begin{definition}\label{6ttFSD}
Call a given mapping $\omega:(0,\infty)\to(0,\infty)$ a \texttt{growth function} if $\omega$ is 
non-decreasing and $\omega(t)\to 0$ as $t\to 0^{+}$.
\end{definition}

\begin{definition}\label{7tFFD}
Let $E\subset\mathbb{R}^{n}$ be an arbitrary set (implicitly assumed to have cardinality at least $2$) 
and let $\omega$ be a growth function. The \texttt{homogeneous $\omega$-H\"older space} on $E$ is defined as
\begin{equation}\label{7rsSS}
\dot{\mathscr{C}}^\omega(E,\mathbb{C}^M):=\big\{u:E\to\mathbb{C}^M:\,
[u]_{\dot{\mathscr{C}}^\omega(E,\mathbb{C}^M)}<\infty\big\},
\end{equation}
where $[\cdot]_{\dot{\mathscr{C}}^\omega(E,\mathbb{C}^M)}$ stands for the seminorm
\begin{equation}\label{7tEEE}
[u]_{\dot{\mathscr{C}}^\omega(E,\mathbb{C}^M)}:=\sup_{\substack{x,y\in E\\ x\neq y}}\frac{|u(x)-u(y)|}{\omega(|x-y|)}.
\end{equation}
\end{definition}

Let us note that the fact that $\omega(t)\to 0$ as $t\to 0^+$ implies that if 
$u\in\dot{\mathscr{C}}^\omega(E,\mathbb{C}^M)$ then $u$ is uniformly continuous. 
The choice $\omega(t):=t^\alpha$ for each $t>0$, with $\alpha\in(0,1)$, yields the 
classical scale of H\"older spaces.

Here and elsewhere in the paper, we agree to denote the $(n-1)$-dimensional Lebesgue measure of 
given Lebesgue measurable set $E\subseteq{\mathbb{R}}^{n-1}$ by $|E|$. Also, by a cube $Q$ in $\mathbb{R}^{n-1}$ 
we shall understand a cube with sides parallel to the coordinate axes. Its side-length will be denoted by $\ell(Q)$, 
and for each $\lambda>0$ we shall denote by $\lambda\,Q$ the cube concentric with $Q$ whose side-length is 
$\lambda\,\ell(Q)$. For every function $h\in L^1_{\rm loc}(\mathbb{R}^{n-1},{\mathbb{C}}^M)$ we write 
\begin{equation}\label{nota-aver}
h_Q:=\fint_{Q} h(x')\,dx':=\frac{1}{|Q|}\int_{Q}h(x')\,dx'\in{\mathbb{C}}^M,
\end{equation}
with the integration performed componentwise. 

\begin{definition}\label{65RFF}
Given a growth function $\omega$ along with some integrability exponent $p\in[1,\infty)$, 
the associated \texttt{generalized Morrey-Campanato space} in $\mathbb{R}^{n-1}$ is defined as
\begin{equation}\label{utfFFD}
\mathscr{E}^{\omega,p}(\mathbb{R}^{n-1},\mathbb{C}^M)
:=\big\{f\in L^1_{\loc}(\mathbb{R}^{n-1},\mathbb{C}^M):\,
\norm{f}_{\mathscr{E}^{\omega,p}(\mathbb{R}^{n-1},\mathbb{C}^M)}<\infty\big\},
\end{equation}
where $\norm{f}_{\mathscr{E}^{\omega,p}(\mathbb{R}^{n-1},\mathbb{C}^M)}$ stands for the seminorm
\begin{equation}\label{987tF}
\norm{f}_{\mathscr{E}^{\omega,p}(\mathbb{R}^{n-1},\mathbb{C}^M)}
:=\sup_{Q\subset\mathbb{R}^{n-1}}\frac{1}{\omega(\ell(Q))}\bigg(\fint_Q|f(x')-f_Q|^p\,dx'\bigg)^{1/p}.
\end{equation}
\end{definition}
The choice $\omega(t):=t^\alpha$ with $\alpha\in(0,1)$ corresponds to the classical Morrey-Campanato spaces, 
while the special case $\omega(t):=1$ yields the usual space of functions of bounded mean oscillations ($\BMO$). 
We also define, for every $u\in\mathscr{C}^1(\mathbb{R}^{n}_{+},\mathbb{C}^M)$ and $q\in(0,\infty)$, 
\begin{equation}\label{7yGGG}
\norm{u}_{**}^{(\omega,q)}:=\sup_{Q\subset\mathbb{R}^{n-1}}
\frac{1}{\omega(\ell(Q))}\bigg(\fint_Q\bigg(\int_0^{\ell(Q)}|(\nabla u)(x',t)|^2\,t\,dt\bigg)^{q/2}\,dx'\bigg)^{1/q}.
\end{equation}
As far as this seminorm is concerned, there are two reasonable candidates for the end-point $q=\infty$  
(see Proposition~\ref{prop:proper-sols} and Lemma~\ref{lem:appendix}). First, we may consider 
\begin{equation}\label{ygFF}
\norm{u}_{**}^{(\omega,\exp)}:=\sup_{Q\subset\mathbb{R}^{n-1}}
\frac{1}{\omega(\ell(Q))}\bigg\|  \Big(  \int_0^{\ell(Q)}|(\nabla u)(\cdot,t)|^2\,t\,dt
\Big)^{1/2} \bigg\|_{\exp L,Q}
\end{equation}
where $\|\cdot\|_{\exp L,Q}$ is the version of the norm in the Orlicz space $\exp L$ 
localized and normalized relative to $Q$, i.e., 
\begin{equation}\label{eq:Lux-norm}
\|f\|_{\exp L,Q}:=\inf\left\{t>0:\fint_{Q}\Big(e^{\tfrac{|f(x')|}{t}}-1\Big)\,dx'\le 1\right\}.
\end{equation}
Second, corresponding to the limiting case $q=\infty$ we may consider
\begin{equation}\label{yRRFF}
\norm{u}_{**}^{(\omega,\infty)}:=\sup_{(x',t)\in\mathbb{R}^{n}_{+}}\frac{t}{\omega(t)}|(\nabla u)(x',t)|.
\end{equation}

We are ready to describe our main result concerning the Dirichlet problems with 
data in generalized H\"older and generalized Morrey-Campanato spaces for homogeneous 
second-order strongly elliptic systems of differential operators with constant complex coefficients 
(cf. \eqref{ars} and \eqref{LegHad}). In Section~\ref{section:well-general} 
(cf. Theorems~\ref{mainthmBMO2b}-\ref{mainthmBMO2}), we weaken the condition \eqref{omega-cond:main} 
and still prove well-posedness for the two Dirichlet problems. The main difference 
is that in that case they are no longer equivalent as \eqref{maincorBMOeq} might fail 
(see Example~\ref{example}). 

\begin{theorem}\label{mainthmBMO}
Consider a strongly elliptic constant complex coefficient second-order $M\times M$ system $L$, as in 
\eqref{ars}-\eqref{LegHad}. Also, fix $p\in[1,\infty)$ along with $q\in(0,\infty]$, and let $\omega$ 
be a growth function satisfying, for some finite constant $C_0\geq 1$, 
\begin{equation}\label{omega-cond:main}
\int_0^{t}\omega(s)\frac{ds}{s}+t\,\int_t^{\infty}\frac{\omega(s)}{s}\,\frac{ds}{s}\leq C_0\,\omega(t)
\,\,\text{ for each }\,\,t\in(0,\infty).
\end{equation}

Then the following statements are true. 

\begin{list}{\textup{(\theenumi)}}{\usecounter{enumi}\leftmargin=1cm \labelwidth=1cm \itemsep=0.2cm \topsep=.2cm \renewcommand{\theenumi}{\alph{enumi}}}

\item\label{bvp-Hol-Dir:main} 
The generalized H\"older Dirichlet problem for the system $L$ in $\mathbb{R}^{n}_{+}$, i.e., 
\begin{equation}\label{BVPb}
\left\lbrace
\begin{array}{l}
u\in\mathscr{C}^{\infty}(\mathbb{R}^{n}_{+},\mathbb{C}^M),
\\[4pt]
Lu=0\,\,\text{ in }\,\,\mathbb{R}^n_{+},
\\[4pt] 
\left[u\right]_{\dot{\mathscr{C}}^{\omega}(\mathbb{R}^n_{+},\mathbb{C}^M)}<\infty,
\\[4pt]
\restr{u}{\partial\mathbb{R}^n_{+}}^{{}^{\rm lim}}=f\in\dot{\mathscr{C}}^{\omega}(\mathbb{R}^{n-1},\mathbb{C}^M)
\,\,\text{ on }\,\,\mathbb{R}^{n-1},
\end{array}
\right.
\end{equation}
is well-posed. More specifically, there is a unique solution which is given by 
\begin{equation}\label{eqn-Dir-BMO:u}
u(x',t)=(P_t^L*f)(x'),\qquad\forall\,(x',t)\in{\mathbb{R}}^n_{+},
\end{equation}
where $P^L$ denotes the Poisson kernel for $L$ in $\mathbb{R}^{n}_+$ from Theorem~\ref{PoissonConvolution}. 
In addition, $u$ belongs to the space $\dot{\mathscr{C}}^{\omega}(\overline{\mathbb{R}^n_{+}},\mathbb{C}^M)$, 
satisfies $\restr{u}{\partial\mathbb{R}^n_{+}}=f$, and there exists a finite constant  $C=C(n,L,\omega)\geq 1$ 
such that
\begin{equation}\label{mainthmBMOeq2}
C^{-1}[f]_{\dot{\mathscr{C}}^{\omega}(\mathbb{R}^{n-1},\mathbb{C}^M)} 
\leq[u]_{\dot{\mathscr{C}}^{\omega}(\mathbb{R}^n_{+},\mathbb{C}^M)} 
\leq C [f]_{\dot{\mathscr{C}}^{\omega}(\mathbb{R}^{n-1},\mathbb{C}^M)}.
\end{equation}  

\item\label{bvp-MC-Dir:main} 
The generalized Morrey-Campanato Dirichlet problem for $L$ in $\mathbb{R}^{n}_{+}$, formulated as
\begin{equation}\label{BVP}
\left\lbrace
\begin{array}{l}
u\in\mathscr{C}^{\infty}(\mathbb{R}^{n}_{+},\mathbb{C}^M),
\\[4pt]
Lu=0\,\,\text{ in }\,\,\mathbb{R}^n_{+},
\\[4pt] 
\norm{u}_{**}^{(\omega,q)}<\infty,
\\[4pt]
\restr{u}{\partial\mathbb{R}^n_{+}}^{{}^{\rm nt.lim}}
=f\in\mathscr{E}^{\omega,p}(\mathbb{R}^{n-1},\mathbb{C}^M)\,\,\text{ a.e. on }\,\,\mathbb{R}^{n-1},
\end{array}
\right.
\end{equation}
is well-posed. More precisely, there is a unique solution \eqref{BVP} which is given by \eqref{eqn-Dir-BMO:u}. 
In addition, $u$ belongs to $\dot{\mathscr{C}}^{\omega}(\overline{\mathbb{R}^n_{+}},\mathbb{C}^M)$, satisfies
$\restr{u}{\partial\mathbb{R}^n_{+}}=f$ a.e. on $\mathbb{R}^{n-1}$, and there exists a 
finite constant $C=C(n,L,\omega,p,q)\geq 1$ such that
\begin{equation}\label{mainthmBMOeq1}
C^{-1}\norm{f}_{\mathscr{E}^{\omega,p}(\mathbb{R}^{n-1},\mathbb{C}^M)} 
\leq\norm{u}_{**}^{(\omega,q)} 
\leq C\norm{f}_{\mathscr{E}^{\omega,p}(\mathbb{R}^{n-1},\mathbb{C}^M)}.
\end{equation} 
Furthermore, all these properties remain true if $\norm{\cdot}_{**}^{(\omega,q)}$ is replaced everywhere 
by $\norm{\cdot}_{**}^{(\omega,\exp)}$.

\item\label{equiv:main} 
The following equality between vector spaces holds 
\begin{equation}\label{maincorBMOeq}
\dot{\mathscr{C}}^{\omega}(\mathbb{R}^{n-1},\mathbb{C}^M)
=\mathscr{E}^{\omega,p}(\mathbb{R}^{n-1},\mathbb{C}^M)  
\end{equation}
with equivalent norms, where the right-to-left inclusion is understood in the sense 
that for each $f\in\mathscr{E}^{\omega,p}(\mathbb{R}^{n-1},\mathbb{C}^M)$ there exists a unique 
$\widetilde{f}\in\dot{\mathscr{C}}^{\omega}(\mathbb{R}^{n-1},\mathbb{C}^M)$ with the property that 
$f=\widetilde{f}$ a.e. in $\mathbb{R}^{n-1}$. 

As a result, the Dirichlet problems \eqref{BVPb} and \eqref{BVP} are equivalent. Specifically, 
for any pair of boundary data which may be identified in the sense of \eqref{maincorBMOeq}
these problems have the same unique solution (given by \eqref{eqn-Dir-BMO:u}). 
\end{list}
\end{theorem}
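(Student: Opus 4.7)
The plan is to treat parts~\ref{bvp-Hol-Dir:main} and~\ref{bvp-MC-Dir:main} as direct consequences of the more general well-posedness results (Theorems~\ref{mainthmBMO2b}--\ref{mainthmBMO2}) established in Section~\ref{section:well-general} under weaker hypotheses on $\omega$; since \eqref{omega-cond:main} is strictly stronger, these immediately yield well-posedness of both \eqref{BVPb} and \eqref{BVP}, the solution formula \eqref{eqn-Dir-BMO:u}, and the quantitative estimates \eqref{mainthmBMOeq2}--\eqref{mainthmBMOeq1}. The equivalence of the two Dirichlet problems asserted at the end of part~\ref{equiv:main} then reduces to the equality of function spaces \eqref{maincorBMOeq} together with the shared Poisson integral formula, so the substantive step is establishing \eqref{maincorBMOeq}.

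For \eqref{maincorBMOeq}, the inclusion $\dot{\mathscr{C}}^\omega(\mathbb{R}^{n-1},\mathbb{C}^M)\hookrightarrow \mathscr{E}^{\omega,p}(\mathbb{R}^{n-1},\mathbb{C}^M)$ is elementary: for any cube $Q\subset\mathbb{R}^{n-1}$ and any $x'\in Q$, one has $|f(x')-f_Q|\leq [f]_{\dot{\mathscr{C}}^\omega}\,\omega(\sqrt{n-1}\,\ell(Q))$, and the doubling property of $\omega$ (a consequence of \eqref{omega-cond:main}, obtained by testing its second summand on the annulus $[2t,4t]$) absorbs the dimensional factor. For the nontrivial inclusion I would take a PDE detour. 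Given $f\in\mathscr{E}^{\omega,p}(\mathbb{R}^{n-1},\mathbb{C}^M)$, part~\ref{bvp-MC-Dir:main} provides $u=P^L_t*f$ with $\norm{u}_{**}^{(\omega,q)}\leq C\norm{f}_{\mathscr{E}^{\omega,p}(\mathbb{R}^{n-1},\mathbb{C}^M)}$ and nontangential boundary trace equal to $f$ a.e.\ on $\mathbb{R}^{n-1}$. The key claim is that this forces
\[
[u]_{\dot{\mathscr{C}}^\omega(\overline{\mathbb{R}^n_+},\mathbb{C}^M)} \leq C\,\norm{u}_{**}^{(\omega,q)};
\]
granting this, $u$ extends uniformly continuously to $\overline{\mathbb{R}^n_+}$, and its boundary values yield the unique $\widetilde{f}\in\dot{\mathscr{C}}^\omega(\mathbb{R}^{n-1},\mathbb{C}^M)$ satisfying $\widetilde{f}=f$ a.e.

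The main obstacle is this Hölder estimate, which I would carry out in two stages. First, by interior regularity for $L$-null solutions (Caccioppoli and reverse-Hölder bounds on Whitney-type boxes inside $\mathbb{R}^n_+$), the $L^q$-averaged information in $\norm{u}_{**}^{(\omega,q)}$ upgrades to the pointwise gradient bound
\[
|(\nabla u)(x',t)|\leq C\,\norm{u}_{**}^{(\omega,q)}\,\frac{\omega(t)}{t},\qquad (x',t)\in\mathbb{R}^n_+,
\]
which is the content of the comparison between the various $\norm{\cdot}_{**}^{(\omega,\cdot)}$ seminorms contained in Lemma~\ref{lem:appendix}. Second, one integrates this pointwise bound along polygonal paths: for $(x',t),(y',s)\in\overline{\mathbb{R}^n_+}$ with $h:=|x'-y'|+|t-s|$, route vertically from $(x',t)$ up to height $h$, horizontally to $(y',h)$, and then down to $(y',s)$. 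The two vertical legs contribute integrals dominated by $\int_0^{h}\omega(\tau)\,d\tau/\tau$ while the horizontal leg contributes at most $C\,\omega(h)$, so the Dini-type first summand in \eqref{omega-cond:main} bounds the total by $C\,\omega(h)$, as needed. The second summand in \eqref{omega-cond:main} enters upstream, ensuring absolute convergence of the Poisson integral defining $u$ in the first place.
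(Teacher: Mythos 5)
Your proposal follows essentially the same route as the paper: parts (a) and (b) are quoted from Theorems~\ref{mainthmBMO2b} and~\ref{mainthmBMO2} (the paper additionally records that \eqref{omega-cond:main} forces $W\approx\omega$, which is what converts the $\mathscr{E}^{W,p}$-norm appearing on the left of \eqref{mainthmBMOeq12} into the $\mathscr{E}^{\omega,p}$-norm required in \eqref{mainthmBMOeq1}), and the right-to-left inclusion in \eqref{maincorBMOeq} is obtained exactly as you describe, by showing that $u=P^L_t\ast f$ lies in $\dot{\mathscr{C}}^{\omega}(\overline{\mathbb{R}^n_{+}},\mathbb{C}^M)$ and setting $\widetilde f:=\restr{u}{\partial\mathbb{R}^n_{+}}$. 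Two corrections to the details. First, the chain from $\norm{u}_{**}^{(\omega,q)}$ to the pointwise bound $|(\nabla u)(x',t)|\le C\,\omega(t)/t$ and then to $[u]_{\dot{\mathscr{C}}^{\omega}}$ is the content of Proposition~\ref{prop:proper-sols}, items \eqref{sup-q-car} and \eqref{CW-car-q} (resting on the interior estimates of Theorem~\ref{thmestimateder}), not of Lemma~\ref{lem:appendix}; the latter is the John--Nirenberg lemma and is used only for the $\exp L$ endpoint. Second, your polygonal path ``up to height $h$, across, down to $(y',s)$'' fails when both $t$ and $s$ exceed $h=|x'-y'|+|t-s|$: the leg from $(x',t)$ to $(x',h)$ then contributes $\int_h^t\omega(\tau)\,\tfrac{d\tau}{\tau}$, which is not dominated by $\int_0^h\omega(\tau)\,\tfrac{d\tau}{\tau}$ (take $\omega(\tau)=\tau^{\alpha}$ and $t\gg h$). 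The path used in \eqref{step4eq1}--\eqref{step4eq3} instead goes \emph{up} by $r=|x-y|$ from each endpoint, so each vertical leg has length $r$ and is bounded by $C_\omega W(r)$ via the quasi-monotonicity \eqref{wnonincreasing}, while the connecting segment stays at height $\geq r$, where the gradient bound contributes at most $C_\omega\,\omega(r)/r$ per unit length; this covers all configurations uniformly and is the correct form of your second stage.
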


A few comments regarding the previous result. In Lemma~\ref{wlemma} we shall 
prove that, for growth functions as in \eqref{omega-cond:main}, each 
$u\in\dot{\mathscr{C}}^{\omega}(\mathbb{R}^n_{+},\mathbb{C}^M)$ extends uniquely to 
a function $u\in\dot{\mathscr{C}}^{\omega}(\overline{\mathbb{R}^n_{+}},\mathbb{C}^M)$. 
Hence, the ordinary restriction $\restr{u}{\partial\mathbb{R}^n_{+}}$ is well-defined in the
context of item {\rm (a)} of Theorem~\ref{mainthmBMO}. 
In item {\rm (b)} the situation is slightly different. One can first show that $u$ extends to a 
continuous function up to, and including, the boundary. Hence, the non-tangential pointwise 
trace agrees with the restriction to the boundary everywhere. However, since functions in 
$\mathscr{E}^{\omega,p}(\mathbb{R}^{n-1},\mathbb{C}^M)$ are canonically identified whenever they 
agree outside of a set of zero Lebesgue measure, the boundary condition in \eqref{BVP} is most naturally 
formulated by asking that the non-tangential boundary trace agrees with the boundary datum almost everywhere.
The same type of issue arises when interpreting \eqref{maincorBMOeq}. Specifically, while the left-to-right inclusion 
has a clear meaning, the converse inclusion should be interpreted as saying that each equivalence class
in $\mathscr{E}^{\omega,p}(\mathbb{R}^{n-1},\mathbb{C}^M)$ (induced by the aforementioned identification) 
has a unique representative from $\dot{\mathscr{C}}^{\omega}(\mathbb{R}^{n-1},\mathbb{C}^M)$.   We would like to observe that \eqref{maincorBMOeq} extends the well-known result of N.G.~Meyers \cite{Mey} who considered the case $\omega(t)=t^\alpha$, $t>0$. Here we extend the class of growth functions for which \eqref{maincorBMOeq} holds and our  alternative approach is based on PDE. 
 
It is illustrative to provide examples of growth functions to which Theorem~\ref{mainthmBMO} applies. 
In this vein, we first observe that \eqref{omega-cond:main} is closely related to the dilation indices 
of Orlicz spaces studied in \cite{BenSha88, FioKrb97} in relation to interpolation in Orlicz spaces. 
Concretely, given a growth function $\omega$ set 
\begin{equation}\label{6gDDD.1} 
h_{\omega}(t):=\sup_{s>0}\frac{\omega(st)}{\omega(s)},\qquad\forall\,t>0, 
\end{equation} 
and define the lower and upper dilation indices, respectively, as
\begin{equation}\label{6gDDD.2}
i_{\omega}:=\sup_{0<t<1}\frac{\log h_{\omega}(t)}{\log t}\,\,\text{ and }\,\,
I_{\omega}:=\inf_{t>1}\frac{\log h_{\omega}(t)}{\log t}. 
\end{equation} 
One can see that if $0<i_\omega\leq I_\omega<1$ then \eqref{omega-cond:main} holds. 
Indeed, it is not hard to check that there exists a constant $C\in(0,\infty)$ with the property that
$h_\omega(t)\leq C\,t^{i_\omega/2}$ for every $t\in(0,1]$, and $h_\omega(t)\leq C\,t^{(I_\omega+1)/2}$ 
for every $t\in[1,\infty)$. These, in turn, readily yield \eqref{omega-cond:main}.

Now, given $\alpha\in(0,1)$, if $\omega(t):=t^\alpha$ for each $t>0$ then $i_\omega=I_\omega=\alpha$ and, hence, 
\eqref{omega-cond:main} holds. Note that in that case $\dot{\mathscr{C}}^{\omega}(\mathbb{R}^n_{+},\mathbb{C}^M)
=\dot{\mathscr{C}}^{\alpha }(\mathbb{R}^n_{+},\mathbb{C}^M)$ is the standard homogeneous H\"older space of order 
$\alpha$ and the particular version of Theorem~\ref{mainthmBMO} corresponding to this scenario 
has been established in \cite{BMO-MarMitMitMit16}. This being said, there many examples of interest that are treated 
here for the first time, such as $\omega(t) = t^\alpha\,(A+\log_{+}t)^\theta$ for $A:=\max \{ 1, -\theta/\alpha \}$ and each $t>0$, or 
$\omega(t)= t^\alpha\,(A+\log_{+}(1/t))^\theta$ for $A:=\max \{ 1, \theta/\alpha \}$ and each $t>0$, with $0<\alpha<1$, $\theta\in\mathbb{R}$,
and $\log_{+}(t) := \max \{ 0 , \log t \}$.

In these situations $i_\omega=I_\omega=\alpha$ which guarantees that \eqref{omega-cond:main} holds. 
Furthermore, if $\omega(t):=\max\{t^\alpha,t^\beta\}$, or $\omega(t):=\min\{t^\alpha,t^\beta\}$, 
for each $t>0$, with $0<\alpha,\beta<1$, then in both cases we have $i_\omega=\min\{\alpha,\beta\}$ 
and $I_\omega=\max\{\alpha,\beta\}$, hence condition \eqref{omega-cond:main} is verified once again. 

The following result, providing a characterization of the generalized H\"older and generalized Morrey-Campanato 
spaces in terms of the boundary traces of solutions, is a byproduct of the proof of the above theorem.

\begin{corollary}\label{maincorBMO}
Let $L$ be a strongly elliptic, constant {\rm (}complex{\rm )} coefficient, second-order 
$M\times M$ system in ${\mathbb{R}}^n$. Fix $p\in[1,\infty)$ along with $q\in(0,\infty)$, 
and let $\omega$ be a growth function for which \eqref{omega-cond:main} holds. Then for every function 
$u\in\mathscr{C}^{\infty}(\mathbb{R}^n_{+},\mathbb{C}^M)$ satisfying $Lu=0$ in $\mathbb{R}^{n}_{+}$ 
one has 
\begin{equation}\label{eq:qfrafr}
\norm{u}_{**}^{(\omega,q)}\approx\norm{u}_{**}^{(\omega,\exp)}\approx\norm{u}_{**}^{(\omega,\infty)}
\approx[u]_{\dot{\mathscr{C}}^{w}(\mathbb{R}^{n}_{+},\mathbb{C}^M)} 
\end{equation}
where the implicit proportionality constants depend only on $L$, $n$, $q$, and the constant 
$C_0$ in \eqref{omega-cond:main}. Moreover, 
\begin{align}\label{HLMO}
\dot{\mathscr{C}}^{\omega}(\mathbb{R}^{n-1},\mathbb{C}^M) &=\big\{\restr{u}{\partial\mathbb{R}^n_{+}}:\,
u\in\mathscr{C}^{\infty}(\mathbb{R}^n_{+},\mathbb{C}^M),\,\,Lu=0\text{ in }\mathbb{R}^{n}_{+},\,\,
\left[u\right]_{\dot{\mathscr{C}}^{\omega}(\mathbb{R}^n_{+},\mathbb{C}^M)}<\infty\big\}
\nonumber\\[7pt]
&=\big\{\restr{u}{\partial\mathbb{R}^n_{+}}:\,u\in\mathscr{C}^{\infty}(\mathbb{R}^n_{+},\mathbb{C}^M), 
\,\,Lu=0\text{ in }\mathbb{R}^{n}_{+},\,\,\norm{u}_{**}^{(\omega,q)}<\infty\big\}
\nonumber\\[7pt]
&=\big\{\restr{u}{\partial\mathbb{R}^n_{+}}:\,u\in\mathscr{C}^{\infty}(\mathbb{R}^n_{+},\mathbb{C}^M), 
\,\,Lu=0\text{ in }\mathbb{R}^{n}_{+},\,\,\norm{u}_{**}^{(\omega,\exp)}<\infty\big\}
\nonumber\\[7pt] 
&=\big\{\restr{u}{\partial\mathbb{R}^n_{+}}:\,u\in\mathscr{C}^{\infty}(\mathbb{R}^n_{+},\mathbb{C}^M), 
\,\,Lu=0\text{ in }\mathbb{R}^{n}_{+},\,\,\norm{u}_{**}^{(\omega,\infty)}<\infty\big\}.
\end{align} 
\end{corollary}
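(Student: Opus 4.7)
The plan is to extract Corollary~\ref{maincorBMO} from Theorem~\ref{mainthmBMO}, with the one genuinely new piece being the direct treatment of the pointwise end-point norm $\|u\|_{**}^{(\omega,\infty)}$.

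First, I would establish $\|u\|_{**}^{(\omega,q)} \approx \|u\|_{**}^{(\omega,\exp)} \approx [u]_{\dot{\mathscr{C}}^\omega(\mathbb{R}^n_+,\mathbb{C}^M)}$ by a single loop through the Dirichlet problem. Suppose first that $[u]_{\dot{\mathscr{C}}^\omega(\mathbb{R}^n_+,\mathbb{C}^M)} < \infty$; Lemma~\ref{wlemma} extends $u$ continuously to $\overline{\mathbb{R}^n_+}$ with boundary trace $f \in \dot{\mathscr{C}}^\omega(\mathbb{R}^{n-1},\mathbb{C}^M)$, so $u$ solves \eqref{BVPb}, and uniqueness in Theorem~\ref{mainthmBMO}(a) forces $u = P^L_t * f$. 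If instead $\|u\|_{**}^{(\omega,q)}$ or $\|u\|_{**}^{(\omega,\exp)}$ is finite, the corresponding well-posedness statement in Theorem~\ref{mainthmBMO}(b) produces a boundary trace $f \in \mathscr{E}^{\omega,p}(\mathbb{R}^{n-1},\mathbb{C}^M)$ and identifies $u = P^L_t * f$ in the same way. Once $u$ is so represented, chaining \eqref{mainthmBMOeq2}, the identification \eqref{maincorBMOeq}, and both the $q$ and $\exp$ versions of \eqref{mainthmBMOeq1} gives
\[
[u]_{\dot{\mathscr{C}}^\omega(\mathbb{R}^n_+,\mathbb{C}^M)} \approx [f]_{\dot{\mathscr{C}}^\omega(\mathbb{R}^{n-1},\mathbb{C}^M)} \approx \|f\|_{\mathscr{E}^{\omega,p}(\mathbb{R}^{n-1},\mathbb{C}^M)} \approx \|u\|_{**}^{(\omega,q)} \approx \|u\|_{**}^{(\omega,\exp)}.
\]

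Next, I would pin down $\|u\|_{**}^{(\omega,\infty)} \approx [u]_{\dot{\mathscr{C}}^\omega(\mathbb{R}^n_+,\mathbb{C}^M)}$ by a direct PDE argument. For the upper bound, on the ball $B((x',t),t/2) \subset \mathbb{R}^n_+$, standard interior gradient estimates for the constant-coefficient system $Lu = 0$ yield $t\,|\nabla u(x',t)| \lesssim \osc_{B((x',t),t/2)} u \leq \omega(t)\,[u]_{\dot{\mathscr{C}}^\omega(\mathbb{R}^n_+,\mathbb{C}^M)}$, using monotonicity of $\omega$. For the lower bound, given $x,y \in \mathbb{R}^n_+$ with $r := |x-y|$, I would join $x$ to $y$ by a polygonal path that (if needed) lifts each endpoint vertically to altitude $\sim r$ and then travels horizontally at that altitude; integrating the bound $|\nabla u| \leq \|u\|_{**}^{(\omega,\infty)}\,\omega(t)/t$ along this path and invoking both halves of \eqref{omega-cond:main} — the first, $\int_0^r \omega(s)\,ds/s \lesssim \omega(r)$, for the vertical legs, and the second, which ensures $\omega(t)/t$ is essentially decreasing so that $r\cdot\omega(t)/t \lesssim \omega(r)$ at altitudes $t \gtrsim r$, for the horizontal leg — yields $|u(x)-u(y)| \lesssim \omega(r)\,\|u\|_{**}^{(\omega,\infty)}$.

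Finally, \eqref{HLMO} falls out immediately. For any $f \in \dot{\mathscr{C}}^\omega(\mathbb{R}^{n-1},\mathbb{C}^M)$, Theorem~\ref{mainthmBMO}(a) furnishes a solution $u = P^L_t * f$ with $[u]_{\dot{\mathscr{C}}^\omega(\mathbb{R}^n_+,\mathbb{C}^M)} < \infty$, and \eqref{eq:qfrafr} then makes the other three seminorms finite as well, giving the forward inclusions. Conversely, any solution $u$ with one of the four seminorms finite has $[u]_{\dot{\mathscr{C}}^\omega(\mathbb{R}^n_+,\mathbb{C}^M)} < \infty$ by \eqref{eq:qfrafr}, hence extends continuously to $\overline{\mathbb{R}^n_+}$ with boundary trace in $\dot{\mathscr{C}}^\omega(\mathbb{R}^{n-1},\mathbb{C}^M)$, giving the reverse inclusions. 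The only real technical hurdle I anticipate is organizing the path-integration estimate in the $\|u\|_{**}^{(\omega,\infty)}$ lower bound so that both halves of \eqref{omega-cond:main} are used uniformly regardless of how close $x,y$ lie to the boundary; everything else is bookkeeping over Theorem~\ref{mainthmBMO}.
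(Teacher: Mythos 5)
Your overall architecture differs from the paper's in one substantive way. The paper obtains all of \eqref{eq:qfrafr} directly from the a priori estimates of Proposition~\ref{prop:proper-sols}\eqref{all-comp}, which compares the four seminorms for an \emph{arbitrary} null-solution without ever passing through a boundary value problem; the corollary's proof is then three lines. You instead route the first three equivalences through the well-posedness of \eqref{BVPb} and \eqref{BVP}, and prove $\norm{u}_{**}^{(\omega,\infty)}\approx[u]_{\dot{\mathscr{C}}^{\omega}(\mathbb{R}^{n}_{+},\mathbb{C}^M)}$ by a separate direct argument --- which, incidentally, reproduces Proposition~\ref{prop:proper-sols}\eqref{sup-Comega} and \eqref{CW-car-q} almost verbatim: the paper's \eqref{CW-car-q} is exactly your path-integration scheme, carried out for the auxiliary growth function $W$ and then converted to $\omega$ via $W\leq C_0\,\omega$.

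The one step in your loop that does not work as written is: ``if $\norm{u}_{**}^{(\omega,q)}$ or $\norm{u}_{**}^{(\omega,\exp)}$ is finite, the well-posedness statement in Theorem~\ref{mainthmBMO}\eqref{bvp-MC-Dir:main} produces a boundary trace.'' Well-posedness of \eqref{BVP} asserts that for each datum $f$ there is a unique solution; it cannot by itself manufacture the nontangential trace of an arbitrary null-solution with finite Carleson-type seminorm, because the existence of that trace is precisely the fourth condition of \eqref{BVP}, which must be verified \emph{before} uniqueness can be invoked. Closing this requires the a priori bound $\norm{u}_{**}^{(\omega,\infty)}\leq C\,C_\omega\norm{u}_{**}^{(\omega,q)}$ of Proposition~\ref{prop:proper-sols}\eqref{sup-q-car} (an interior-estimate inequality you never cite, and which does not follow from the elementary comparison \eqref{uqeq:1}, since that goes the wrong way); combined with your own second step it yields $[u]_{\dot{\mathscr{C}}^{\omega}(\mathbb{R}^{n}_{+},\mathbb{C}^M)}<\infty$, hence a continuous extension to $\overline{\mathbb{R}^n_{+}}$ and a trace in $\dot{\mathscr{C}}^{\omega}(\mathbb{R}^{n-1},\mathbb{C}^M)\subset\mathscr{E}^{\omega,p}(\mathbb{R}^{n-1},\mathbb{C}^M)$. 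Once that is inserted your loop closes, but note that at that point the comparability of all four seminorms has already been established by a priori estimates, so the detour through Theorem~\ref{mainthmBMO} becomes redundant --- which is essentially why the paper organizes the argument the other way around. Your treatment of \eqref{HLMO} coincides with the paper's.
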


The plan of the paper is as follows. In Section~\ref{section:growth} we present some properties 
of the growth functions and study some of the features of the generalized H\"{o}lder and 
Morrey-Campanato spaces which are relevant to this work. Section~\ref{section:props-elliptic} 
is reserved for collecting some known results for elliptic systems, and for giving the proof 
of Proposition~\ref{prop:proper-sols}, where some a priori estimates for the null-solutions 
of such systems are established. In turn, these estimates allow us to compare the seminorm 
$\norm{\cdot}_{**}^{(\omega,q)}$ (corresponding to various values of $q$) with 
$[\cdot]_{\dot{\mathscr{C}}^{w}(\mathbb{R}^{n}_{+},\mathbb{C}^M)}$. 
In Section~\ref{section:Existence} we prove the existence of solutions for the Dirichlet problems 
with boundary data in $\dot{\mathscr{C}}^{\omega}(\mathbb{R}^{n-1},\mathbb{C}^M)$ and 
$\mathscr{E}^{\omega,p}(\mathbb{R}^{n-1},\mathbb{C}^M)$. Section~\ref{section:Fatou} 
contains a Fatou-type result for null-solutions of a strongly elliptic system $L$  
belonging to the space $\dot{\mathscr{C}}^{\omega}(\mathbb{R}^{n-1},\mathbb{C}^M)$, 
which will be a key ingredient when establishing uniqueness for the boundary value problems
formulated in Theorem~\ref{mainthmBMO}. Combining the main results of the previous two sections yields 
two well-posedness results under different assumptions on the growth function: one for boundary data in 
$\dot{\mathscr{C}}^{\omega}(\mathbb{R}^{n-1},\mathbb{C}^M)$ and solutions in 
$\dot{\mathscr{C}}^{\omega}(\mathbb{R}^{n}_{+},\mathbb{C}^M)$, and another one for 
boundary data in $\mathscr{E}^{\omega,p}(\mathbb{R}^{n-1},\mathbb{C}^M)$ and solutions satisfying 
$\norm{u}_{**}^{(\omega,q)}<\infty$ for some $0<q\le\infty$, or even in the case where $q$ 
is replaced by $\exp$. In concert, these two results cover all claims of Theorem~\ref{mainthmBMO}. 
Finally, in Appendix~\ref{section:JN} we present a John-Nirenberg type inequality of real-variable 
nature, generalizing some results in \cite{Hofmann-Mayboroda, HofMarMay} by allowing more flexibility 
due to the involvement of growth functions. This is interesting and useful in its own right. 
In addition, we are able to show exponential decay for the measure of the associated level sets 
which, in turn, permits deriving estimates not only in arbitrary $L^q$ spaces but also in the space 
$\exp L$. Our approach for deriving such results is different from \cite{Hofmann-Mayboroda, HofMarMay},
and uses some ideas which go back to a proof of the classical John-Nirenberg exponential integrability 
for {\rm BMO} functions due to Calder\'{o}n. As a matter of fact, our abstract method yields easily 
Calder\'on's classical result.

\section{Growth Functions, Generalized H\"{o}lder and  Morrey-Campanato Spaces}\label{section:growth}

We begin by studying some basic properties of growth functions. As explained in the introduction, we ultimately 
wish to work with growth functions satisfying conditions weaker than \eqref{omega-cond:main}. 
Indeed, the two mains conditions that we will consider are
\begin{equation}\label{omega-cond:a}
\int_0^{1}\omega(s)\frac{ds}{s}<\infty,
\end{equation}
and 
\begin{equation}\label{omega-cond:b}
t\,\int_t^{\infty}\frac{\omega(s)}{s}\,\frac{ds}{s}\leq C_\omega\,\omega(t),\qquad\forall\,t\in(0,\infty),
\end{equation}
for some finite constant $C_\omega\geq 1$. In what follows, $C_\omega$ will always denote the constant 
in \eqref{omega-cond:b}. Clearly, if $\omega$ satisfies it satisfies \eqref{omega-cond:main} then both 
\eqref{omega-cond:a} and \eqref{omega-cond:b} hold but the reverse implication is not true in general 
(see Example~\ref{example} in this regard). 

Later on, we will need the auxiliary function $W$ defined as 
\begin{equation}\label{omega-cond:WDef}
W(t):=\int_0^{t}\omega(s)\frac{ds}{s}\,\,\text{ for each }\,\,t\in(0,\infty).
\end{equation}
Note that \eqref{omega-cond:a} gives that $W(t)<\infty$ for every $t>0$. Then \eqref{omega-cond:main} 
holds if and only if \eqref{omega-cond:b} holds and there exists $C\in(0,\infty)$ such that 
$W(t)\leq C\,\omega(t)$ for each $t\in(0,\infty)$. 

The following lemma gathers some useful properties on growth functions satisfying 
condition \eqref{omega-cond:b}.

\begin{lemma}\label{wlemma}
Given a growth function $\omega$ satisfying \eqref{omega-cond:b}, the following statements are true. 

\begin{list}{\textup{(\theenumi)}}{\usecounter{enumi}\leftmargin=1cm \labelwidth=1cm \itemsep=0.2cm \topsep=.2cm \renewcommand{\theenumi}{\alph{enumi}}}

\item\label{omega-t-incre} 
Whenever $0<t_1\leq t_2<\infty$, one has
\begin{equation}\label{wnonincreasing}
\frac{\omega(t_2)}{t_2}\leq C_\omega\frac{\omega(t_1)}{t_1}.
\end{equation}

\item\label{item:wdoubling} 
For every $t\in(0,\infty)$ one has 
\begin{equation}\label{wdoubling}
\omega(2t)\leq 2C_\omega\,\omega(t).
\end{equation} 

\item\label{wlimit}
One has $\lim_{t\to\infty}\omega(t)/t=0$. 

\item\label{holderclosure} 
For each set $E\subset\mathbb{R}^n$ one has
$\dot{\mathscr{C}}^\omega(E,\mathbb{C}^M)=\dot{\mathscr{C}}^\omega(\overline{E},\mathbb{C}^M)$, 
with equivalent norms. More specifically, the restriction map 
\begin{equation}\label{tgVVVa}
\dot{\mathscr{C}}^\omega(\overline{E},\mathbb{C}^M)\ni u\longmapsto u\big|_{E}
\in\dot{\mathscr{C}}^\omega(E,\mathbb{C}^M)
\end{equation}
is a linear isomorphism which is continuous in the precise sense that, under the canonically identification
of functions $u\in\dot{\mathscr{C}}^\omega(\overline{E},\mathbb{C}^M)$ with 
$u\big|_{E}\in\dot{\mathscr{C}}^\omega(E,\mathbb{C}^M)$, one has
\begin{equation}\label{holderclosureeq}
[u]_{\dot{\mathscr{C}}^\omega(E,\mathbb{C}^M)}
\leq[u]_{\dot{\mathscr{C}}^\omega(\overline{E},\mathbb{C}^M)} 
\leq 2C_\omega[u]_{\dot{\mathscr{C}}^\omega(E,\mathbb{C}^M)}
\end{equation}
for each $u\in\dot{\mathscr{C}}^\omega(E,\mathbb{C}^M)$.
\end{list}
\end{lemma}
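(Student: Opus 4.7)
The plan is to prove the four items in order: item (a) is the engine, (b) falls out immediately, (c) combines (a) with the finiteness of the tail integral in (\ref{omega-cond:b}), and (d) uses uniform continuity of $\omega$-H\"older functions together with the doubling estimate (b).

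For (a), fix $0<t_1\leq t_2$. Since $\omega$ is non-decreasing,
\begin{equation*}
\int_{t_1}^\infty\frac{\omega(s)}{s^2}\,ds\,\geq\,\int_{t_2}^\infty\frac{\omega(s)}{s^2}\,ds\,\geq\,\omega(t_2)\int_{t_2}^\infty\frac{ds}{s^2}\,=\,\frac{\omega(t_2)}{t_2}.
\end{equation*}
Multiplying through by $t_1$ and invoking (\ref{omega-cond:b}) at the scale $t_1$ produces $t_1\,\omega(t_2)/t_2\leq C_\omega\omega(t_1)$, which is exactly (\ref{wnonincreasing}). Item (b) is then the specialization $t_1=t$, $t_2=2t$.

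For (c), item (a) implies that $g(t):=\omega(t)/t$ satisfies $g(t_2)\leq C_\omega g(t_1)$ whenever $t_1\leq t_2$, so $g$ is bounded on $[1,\infty)$ and $\limsup_{t\to\infty}g(t)=:L\in[0,\infty)$ exists. If $L>0$ then for all sufficiently large $s$ one would have $\omega(s)/s\geq L/2$ (again using (a) to propagate a lower bound to the right), forcing $\int_t^\infty \omega(s)/s^2\,ds=\infty$ for large $t$ and contradicting the finiteness of the left-hand side of (\ref{omega-cond:b}). Hence $L=0$.

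For (d), the assumption $\omega(r)\to 0$ as $r\to 0^+$ shows that any $u\in\dot{\mathscr{C}}^\omega(E,\mathbb{C}^M)$ is uniformly continuous on $E$ and therefore extends uniquely to a continuous function $\widetilde u$ on $\overline E$; the left-hand inequality in (\ref{holderclosureeq}) is then trivial. For the right-hand inequality, fix $x\neq y$ in $\overline E$ and pick sequences $x_n,y_n\in E$ with $x_n\to x$, $y_n\to y$; for all $n$ large enough we have $|x_n-y_n|\leq 2|x-y|$, and $\omega$ being non-decreasing yields
\begin{equation*}
|\widetilde u(x_n)-\widetilde u(y_n)|\,\leq\,[u]_{\dot{\mathscr{C}}^\omega(E,\mathbb{C}^M)}\,\omega(|x_n-y_n|)\,\leq\,[u]_{\dot{\mathscr{C}}^\omega(E,\mathbb{C}^M)}\,\omega(2|x-y|).
\end{equation*}
Applying (b) to the last factor and then passing $n\to\infty$ gives $|\widetilde u(x)-\widetilde u(y)|\leq 2C_\omega[u]_{\dot{\mathscr{C}}^\omega(E,\mathbb{C}^M)}\,\omega(|x-y|)$, as desired.

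The one subtle point is that $\omega$ is not assumed continuous, so one cannot simply take a limit inside $\omega(|x_n-y_n|)$ in (d); the doubling inequality (b), whose derivation ultimately rests on the integral condition (\ref{omega-cond:b}), is precisely the tool that bypasses this obstruction and accounts for the factor $2C_\omega$ in (\ref{holderclosureeq}).
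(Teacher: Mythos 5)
Your proof is correct and follows essentially the same route as the paper's: items (a), (b) and (d) are argued identically (the chain of inequalities derived from \eqref{omega-cond:b} together with the monotonicity of $\omega$, the specialization $t_2=2t_1$, and the extension to $\overline{E}$ by uniform continuity combined with the doubling estimate to absorb the factor $\omega(2|x-y|)$). The only deviation is item (c), which the paper obtains directly by letting $t\to\infty$ in $\omega(t)/t\leq\int_t^{\infty}\omega(s)s^{-2}\,ds$ (the tail of a convergent integral), whereas you argue by contradiction; your argument is also valid, but note that \eqref{wnonincreasing} propagates a lower bound on $\omega(s)/s$ to \emph{smaller} arguments rather than larger ones, so the correct reading of your step is that a positive $\limsup=L$ along a sequence $t_k\to\infty$ forces $\omega(s)/s\geq L/(2C_\omega)$ for every $s$ lying below some $t_k$ (hence for all $s$), which is what makes $\int_t^{\infty}\omega(s)s^{-2}\,ds$ diverge and contradicts \eqref{omega-cond:b}.
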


\begin{proof} 
We start observing that for every $t>0$
\begin{equation}\label{eq:fafr}
\frac{\omega(t)}{t}\leq\int_t^{\infty}\frac{\omega(s)}{s}\,\frac{ds}{s}
\leq C_\omega\,\frac{\omega(t)}{t}.
\end{equation}
The first inequality uses that $\omega$ is non-decreasing and the second is just \eqref{omega-cond:b}. 
Then, given $t_1\leq t_2$, we may write 
\begin{equation}\label{6gDDD.24}
\frac{\omega(t_2)}{t_2}\leq\int_{t_2}^{\infty}\frac{\omega(s)}{s}\,\frac{ds}{s}
\leq\int_{t_1}^{\infty}\frac{\omega(s)}{s}\,\frac{ds}{s}
\leq C_{\omega}\frac{\omega(t_1)}{t_1},
\end{equation} 
proving \eqref{omega-t-incre}. The doubling property in \eqref{item:wdoubling} follows at once 
from \eqref{omega-t-incre} by taking $t_2:=2t_1$ in \eqref{wnonincreasing}. Next, the claim in \eqref{wlimit} 
is justified by passing to limit $t\to\infty$ in the first inequality in \eqref{eq:fafr} 
and using Lebesgue's Dominated Convergence Theorem.

Turning our attention to \eqref{holderclosure}, fix an arbitrary $u\in\mathscr{C}^{\omega}(E,\mathbb{C}^M)$. 
As noted earlier, this membership ensures that $u$ is uniformly continuous, hence $u$ extends uniquely 
to a continuous function $v$ on $\overline{E}$. To show that $v$ belongs to 
$\dot{\mathscr{C}}^\omega(\overline{E},\mathbb{C}^M)$ pick two arbitrary distinct points $y,z\in\overline{E}$
and choose two sequences $\{y_k\}_{k\in{\mathbb{N}}}$, $\{z_k\}_{k\in{\mathbb{N}}}$ of points in $E$ 
such that $y_k\to x$ and $z_k\to z$ as $k\to\infty$. By discarding finitely many terms, there is no 
loss of generality in assuming that $|y_k-z_k|<2|y-z|$ for each $k\in{\mathbb{N}}$. 
Relying on the fact that $\omega$ is non-decreasing and 
\eqref{wdoubling}, we may then write 
\begin{multline}
|v(y)-v(z)|=\lim_{k\to\infty}|u(y_k)-u(z_k)|
\leq[u]_{\dot{\mathscr{C}}^{\omega}(E,\mathbb{C}^M)}\limsup_{k\to\infty}\omega(|y_k-z_k|) 
\\[4pt] 
\leq[u]_{\dot{\mathscr{C}}^{\omega}(E,\mathbb{C}^M)}\,\omega(2|y-z|) 
\leq 2C_\omega [u]_{\dot{\mathscr{C}}^{\omega}(E,\mathbb{C}^M)}\,\omega(|y-z|).
\end{multline}
From this, all claims in \eqref{holderclosure} follow, completing the proof of the lemma. 
\end{proof}

In the following lemma we treat $W$ defined in \eqref{omega-cond:WDef} as a growth 
function depending on the original $\omega$. 

\begin{lemma}\label{wlemma3}
Let $\omega$ be a growth function satisfying \eqref{omega-cond:a} and \eqref{omega-cond:b}, 
and let $W(t)$ be defined as in \eqref{omega-cond:WDef}. Then $W:(0,\infty)\to(0,\infty)$ is a growth function 
satisfying \eqref{omega-cond:b} with 
\begin{equation}\label{wW.aaa}
C_W\leq 1+(C_\omega)^2. 
\end{equation}
Moreover,
\begin{equation}\label{wW} 
\omega(t)\leq C_\omega\,W(t)\,\,\text{ for each }\,\,t\in(0,\infty). 
\end{equation}
\end{lemma}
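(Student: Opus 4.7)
The plan is to verify three items in sequence: that $W$ is indeed a growth function, the pointwise comparison \eqref{wW}, and finally the integral condition \eqref{omega-cond:b} for $W$ with the stated constant.

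First I would check that $W$ is a growth function. Monotonicity is immediate since $\omega(s)/s > 0$, so $W$ is non-decreasing. For the vanishing at zero, hypothesis \eqref{omega-cond:a} ensures that the integrand $s \mapsto \omega(s)/s$ is integrable near $0$, so $W(t) \to 0$ as $t \to 0^+$ by absolute continuity of the Lebesgue integral.

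Next I would prove \eqref{wW}. The key observation is that Lemma~\ref{wlemma}\eqref{omega-t-incre} says $\omega(t)/t \le C_\omega\,\omega(s)/s$ whenever $s \le t$, i.e.\ the function $s \mapsto \omega(s)/s$ is almost decreasing. Hence for each fixed $t > 0$,
\begin{equation*}
W(t) = \int_0^t \omega(s)\,\frac{ds}{s} \;\ge\; \int_0^t \frac{\omega(t)}{C_\omega\,t}\,ds \;=\; \frac{\omega(t)}{C_\omega},
\end{equation*}
which is exactly \eqref{wW}.

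Finally, for the integral bound \eqref{omega-cond:b} on $W$, the plan is a splitting plus Fubini argument. Write
\begin{equation*}
t\int_t^\infty \frac{W(s)}{s}\,\frac{ds}{s} = t\int_t^\infty \frac{1}{s^2}\left(\int_0^t + \int_t^s\right)\frac{\omega(r)}{r}\,dr\,ds.
\end{equation*}
The piece coming from $\int_0^t$ contributes $tW(t)\int_t^\infty s^{-2}\,ds = W(t)$. For the piece coming from $\int_t^s$, Fubini's theorem interchanges the order of integration to produce $\int_t^\infty \frac{\omega(r)}{r}\int_r^\infty s^{-2}\,ds\,dr = \int_t^\infty \omega(r)\,r^{-2}\,dr$, and this term is then bounded using \eqref{omega-cond:b} for $\omega$ by $C_\omega\,\omega(t)/t$, multiplied by $t$ gives $C_\omega\,\omega(t)$. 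Invoking \eqref{wW} to pass from $\omega(t)$ back to $W(t)$ yields an upper bound of $C_\omega^2\,W(t)$ for this second piece. Adding the two contributions gives $(1+C_\omega^2)\,W(t)$, establishing both \eqref{omega-cond:b} for $W$ and the explicit constant \eqref{wW.aaa}. The only mild subtlety to watch is the justification of Fubini (everything in sight is nonnegative, so Tonelli applies without integrability worries) and the correct direction of the almost-monotonicity used in the proof of \eqref{wW}; neither should pose a real obstacle.
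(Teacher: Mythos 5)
Your proposal is correct and follows essentially the same route as the paper: the same almost-decreasing estimate $\omega(t)/t\leq C_\omega\,\omega(s)/s$ to get \eqref{wW}, and the same split-plus-Fubini (Tonelli) computation yielding $W(t)+C_\omega\,\omega(t)\leq(1+(C_\omega)^2)W(t)$. The only cosmetic difference is that you split the inner integral before interchanging the order of integration, whereas the paper applies Fubini to the full double integral at once; the resulting bounds are identical.
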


\begin{proof}
By design, $W$ is a non-decreasing function and, thanks to Lebesgue's Dominated Convergence Theorem 
and \eqref{omega-cond:a} we have $W(t)\to 0$ as $t\to 0^{+}$. Also, on account of \eqref{wnonincreasing}, 
for each $t\in(0,\infty)$ we may write 
\begin{equation}\label{tfDDq.at}
\omega(t)=\int_0^t\frac{\omega(t)}{t}\,ds
\leq C_\omega\int_0^t\frac{\omega(s)}{s}\,ds=C_\omega W(t),
\end{equation}
proving \eqref{wW}. In turn, Fubini's Theorem, \eqref{omega-cond:b}, and \eqref{wW} permit us to estimate 
\begin{align}\label{tfDDq}
t\int_t^\infty\frac{W(s)}{s}\frac{ds}{s} 
&=t\int_t^\infty\left(\int_0^s\omega(\lambda)\frac{d\lambda}{\lambda}\right)\frac{ds}{s^2} 
\nonumber\\[4pt] 
&=t\int_0^t\left(\int_t^{\infty}\frac{ds}{s^2}\right)\omega(\lambda)\frac{d\lambda}{\lambda} 
+t\int_t^\infty\left(\int_\lambda^{\infty} \frac{ds}{s^2}\right)\omega(\lambda)\frac{d\lambda}{\lambda}
\nonumber\\[4pt] 
&=t\int_0^t\frac1{t}\omega(\lambda)\frac{d\lambda}{\lambda} 
+t\int_t^{\infty}\frac{\omega(\lambda)}{\lambda}\frac{d\lambda}{\lambda}
\nonumber\\[4pt] 
&\leq W(t)+C_\omega\,\omega(t)
\nonumber\\[4pt] 
&\leq\left(1+(C_\omega)^2\right)W(t),
\end{align}
for each $t\in(0,\infty)$. This shows that $W$ satisfies \eqref{omega-cond:b}
with constant $C_W\leq 1+(C_\omega)^2$.
\end{proof}

Moving on, for each given function $f\in L^1_{\loc}(\mathbb{R}^{n-1},\mathbb{C}^M)$ define the $L^p$-based 
mean oscillation of $f$ at a scale $r\in(0,\infty)$ as 
\begin{equation}\label{tfDDq.rD}
\osc_p(f;r):=\sup_{\substack{Q\subset\mathbb{R}^{n-1}\\ \ell(Q)\leq r}}
\bigg(\fint_Q|f(x')-f_Q|^p\,dx'\bigg)^{1/p}.
\end{equation}

The following lemma gathers some results from \cite[Lemmas 2.1 and 2.2]{BMO-MarMitMitMit16}.

\begin{lemma}\label{M4lem}
Let $f\in L^1_{\loc}(\mathbb{R}^{n-1},\mathbb{C}^M)$.

\begin{list}{\textup{(\theenumi)}}{\usecounter{enumi}\leftmargin=1cm \labelwidth=1cm \itemsep=0.2cm \topsep=.2cm \renewcommand{\theenumi}{\alph{enumi}}}

\item For every $p,q\in[1,\infty)$ there exists some finite $C=C(p,q,n)>1$ such that 
\begin{equation}\label{estimatea}
C^{-1}\osc_p(f;r)\leq\osc_q (f;r)\leq C\osc_p (f;r),\quad\forall\,r> 0.
\end{equation}

\item For every $\varepsilon>0$, 
\begin{equation}\label{estimateb}
\int_{1}^{\infty}\osc_1 (f;s)\frac{ds}{s^{1+\varepsilon}}<\infty
\,\Longrightarrow\,f\in L^1\left(\mathbb{R}^{n-1},\frac{dx'}{1+|x'|^{n-1+\varepsilon}}\right)^M.
\end{equation}
\end{list}
\end{lemma}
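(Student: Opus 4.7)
For part (a), the plan is to handle the two directions separately. The easy direction, namely $\osc_q(f;r)\le C\,\osc_p(f;r)$ whenever $q\le p$, is just H\"older's inequality applied cube by cube. For the reverse inequality (and in particular for $p>q$) the argument goes through a localized John--Nirenberg inequality: on any fixed cube $Q$, the quantity $|f-f_Q|$ has distribution function decaying exponentially, with decay rate controlled by $\osc_1(f;\ell(Q))$. That classical argument (which we would import from, for instance, the Calder\'on-type proof referenced in the introduction and carried out in the Appendix) gives pointwise on each $Q$
\begin{equation*}
\Big(\fint_Q|f-f_Q|^p\Big)^{1/p}\le C_{p,n}\,\Big(\fint_Q|f-f_Q|\Big),
\end{equation*}
and then taking the supremum over cubes of side-length at most $r$ yields $\osc_p(f;r)\le C\,\osc_1(f;r)$. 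Chaining this with H\"older gives the claimed two-sided bound for any $p,q\in[1,\infty)$.

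For part (b), the natural plan is a dyadic telescoping argument. Let $Q_0$ be a fixed cube of side-length $1$ centered at the origin and set $Q_j:=2^jQ_0$ for $j\ge 0$. Since each $Q_j\subset Q_{j+1}$ with $|Q_{j+1}|=2^{n-1}|Q_j|$, the standard estimate
\begin{equation*}
|f_{Q_{j+1}}-f_{Q_j}|\le 2^{n-1}\fint_{Q_{j+1}}|f-f_{Q_{j+1}}|\,dx'\le 2^{n-1}\osc_1(f;2^{j+1})
\end{equation*}
telescopes to give $|f_{Q_j}|\le|f_{Q_0}|+C\sum_{k=0}^{j-1}\osc_1(f;2^{k+1})$. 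On the dyadic annulus $A_j:=Q_{j+1}\setminus Q_j$ (so $|A_j|\approx 2^{j(n-1)}$ and $1+|x'|^{n-1+\varepsilon}\approx 2^{j(n-1+\varepsilon)}$) one writes
\begin{equation*}
\int_{A_j}|f|\,dx'\le|Q_{j+1}|\,\osc_1(f;2^{j+1})+|A_j|\,|f_{Q_{j+1}}|
\end{equation*}
and thus $\int_{A_j}|f|/(1+|x'|^{n-1+\varepsilon})\,dx'\lesssim 2^{-j\varepsilon}\big(\osc_1(f;2^{j+1})+\sum_{k\le j}\osc_1(f;2^{k+1})\big)$.

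Summing over $j\ge 0$ and interchanging the order of summation in the double sum, one finds the total is controlled by $\sum_{k\ge 0}2^{-k\varepsilon}\osc_1(f;2^{k+1})$, and since $s\mapsto\osc_1(f;s)$ is non-decreasing this discrete sum is, up to a constant depending only on $\varepsilon$, comparable to $\int_1^\infty\osc_1(f;s)\,s^{-1-\varepsilon}\,ds$, which is finite by hypothesis. Adding the trivial contribution $\int_{Q_0}|f|/(1+|x'|^{n-1+\varepsilon})\,dx'\le\int_{Q_0}|f|\,dx'<\infty$ finishes the proof. The main technical obstacle is really the John--Nirenberg step in part (a); everything else is bookkeeping with dyadic annuli and the monotonicity of $\osc_1(f;\cdot)$.
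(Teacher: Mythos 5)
The paper does not actually prove this lemma: it is imported verbatim from \cite[Lemmas 2.1 and 2.2]{BMO-MarMitMitMit16}, so there is no in-paper argument to compare against. Judged on its own merits, your proof strategy is the standard and correct one for both parts: John--Nirenberg for the $L^p$-equivalence of oscillations in (a), and a telescoping estimate over dyadic annuli plus monotonicity of $s\mapsto\osc_1(f;s)$ in (b). Part (b) as written is complete and correct.

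In part (a), however, the displayed inequality
$\big(\fint_Q|f-f_Q|^p\big)^{1/p}\le C_{p,n}\,\fint_Q|f-f_Q|$
is false as a statement about a single fixed cube: take $n-1=1$, $Q=[0,1)$ and $f=N\,{\mathbf 1}_{[0,1/N^2)}$, for which $\fint_Q|f-f_Q|\approx 2/N$ while $\big(\fint_Q|f-f_Q|^2\big)^{1/2}\approx 1$. The John--Nirenberg argument controls the exponential decay of $|\{x'\in Q:|f(x')-f_Q|>\lambda\}|$ by the \emph{local} $\BMO$ norm $\sup_{Q'\subseteq Q}\fint_{Q'}|f-f_{Q'}|$, i.e.\ by oscillations over all subcubes of $Q$, not by the single average $\fint_Q|f-f_Q|$. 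Your own prose says exactly this (the decay rate is ``controlled by $\osc_1(f;\ell(Q))$''), so the fix is only to replace the right-hand side of the display by $C_{p,n}\,\osc_1(f;\ell(Q))$; since every subcube of $Q$ has side-length at most $\ell(Q)\le r$, taking the supremum over all $Q$ with $\ell(Q)\le r$ then yields $\osc_p(f;r)\le C\,\osc_1(f;r)$, and chaining with Jensen gives \eqref{estimatea}. With that one correction the argument is sound.
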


We augment Lemma~\ref{M4lem} with similar results involving generalized Morrey-Campanato spaces 
and generalized H\"{o}lder spaces.

\begin{lemma}\label{wlemma2}
Let $\omega$ be a growth function and fix $p\in[1,\infty)$. Then the following properties are valid. 

\begin{list}{\textup{(\theenumi)}}{\usecounter{enumi}\leftmargin=1cm \labelwidth=1cm \itemsep=0.2cm \topsep=.2cm \renewcommand{\theenumi}{\alph{enumi}}}

\item\label{osc-f-Ewp} 
If $f\in\mathscr{E}^{\omega,p}(\mathbb{R}^{n-1},\mathbb{C}^M)$, then
\begin{equation}\label{oscmorrey}
\osc_p(f;r)\leq\omega(r)\norm{f}_{\mathscr{E}^{\omega,p}(\mathbb{R}^{n-1},\mathbb{C}^M)}
\,\,\text{ for each }\,\,r\in(0,\infty).
\end{equation}

\item\label{inclusionitem} 
If $\omega$ satisfies \eqref{omega-cond:b}, then for each 
$f\in\mathscr{E}^{\omega,p}(\mathbb{R}^{n-1},\mathbb{C}^M)$ one has
\begin{equation}\label{easyinclusion} 
\norm{f}_{\mathscr{E}^{\omega,p}(\mathbb{R}^{n-1},\mathbb{C}^M)}
\leq\sqrt{n-1}\,C_\omega [f]_{\dot{\mathscr{C}}^{\omega}(\mathbb{R}^{n-1},\mathbb{C}^M)} 
\end{equation}
and 
\begin{equation}\label{inclusionE} 
\dot{\mathscr{C}}^{\omega}(\mathbb{R}^{n-1},\mathbb{C}^M)\subset 
\mathscr{E}^{\omega,p}(\mathbb{R}^{n-1},\mathbb{C}^M)\subset 
L^1\left(\mathbb{R}^{n-1},\frac{dx'}{1+|x'|^{n}}\right)^M. 
\end{equation}
\end{list}
\end{lemma}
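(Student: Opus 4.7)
The plan is to verify each claim by reducing it to the definitions, the monotonicity of $\omega$, the quasi-monotonicity of $\omega(t)/t$ from Lemma~\ref{wlemma}\eqref{omega-t-incre}, and the oscillation estimates recalled in Lemma~\ref{M4lem}.

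For part \eqref{osc-f-Ewp}, I would simply note that for any cube $Q\subset\mathbb{R}^{n-1}$ with $\ell(Q)\le r$, the defining bound for $\|f\|_{\mathscr{E}^{\omega,p}(\mathbb{R}^{n-1},\mathbb{C}^M)}$ together with the monotonicity of $\omega$ yield
\[
\Bigl(\fint_Q|f(x')-f_Q|^p\,dx'\Bigr)^{1/p}\le\omega(\ell(Q))\,\|f\|_{\mathscr{E}^{\omega,p}(\mathbb{R}^{n-1},\mathbb{C}^M)}\le\omega(r)\,\|f\|_{\mathscr{E}^{\omega,p}(\mathbb{R}^{n-1},\mathbb{C}^M)},
\]
and then take the supremum over such $Q$.

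For part \eqref{inclusionitem}, I would first prove \eqref{easyinclusion}. Given any cube $Q\subset\mathbb{R}^{n-1}$ and any $x'\in Q$, the fact that $|x'-y'|\le\sqrt{n-1}\,\ell(Q)$ for every $y'\in Q$ (the diameter of $Q$) and the monotonicity of $\omega$ give
\[
|f(x')-f_Q|\le\fint_Q|f(x')-f(y')|\,dy'\le[f]_{\dot{\mathscr{C}}^{\omega}(\mathbb{R}^{n-1},\mathbb{C}^M)}\,\omega\bigl(\sqrt{n-1}\,\ell(Q)\bigr).
\]
Since $n\ge 2$ implies $\sqrt{n-1}\,\ell(Q)\ge\ell(Q)$, Lemma~\ref{wlemma}\eqref{omega-t-incre} applied with $t_1:=\ell(Q)$ and $t_2:=\sqrt{n-1}\,\ell(Q)$ upgrades this to $\omega(\sqrt{n-1}\,\ell(Q))\le\sqrt{n-1}\,C_\omega\,\omega(\ell(Q))$. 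Taking $L^p$-averages over $Q$, dividing by $\omega(\ell(Q))$, and supremizing over cubes produces \eqref{easyinclusion}, from which the first inclusion in \eqref{inclusionE} is immediate. For the second inclusion in \eqref{inclusionE}, I would apply Lemma~\ref{M4lem}\eqref{estimateb} with $\varepsilon:=1$. Combining Lemma~\ref{M4lem}\eqref{estimatea} with the already-proved part \eqref{osc-f-Ewp} gives $\osc_1(f;s)\lesssim\omega(s)\,\|f\|_{\mathscr{E}^{\omega,p}(\mathbb{R}^{n-1},\mathbb{C}^M)}$, so
\[
\int_1^\infty\osc_1(f;s)\,\frac{ds}{s^{2}}\lesssim\|f\|_{\mathscr{E}^{\omega,p}(\mathbb{R}^{n-1},\mathbb{C}^M)}\int_1^\infty\frac{\omega(s)}{s}\,\frac{ds}{s}\le C_\omega\,\omega(1)\,\|f\|_{\mathscr{E}^{\omega,p}(\mathbb{R}^{n-1},\mathbb{C}^M)},
\]
where the last estimate comes from \eqref{omega-cond:b} evaluated at $t=1$. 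Lemma~\ref{M4lem}\eqref{estimateb} then delivers the desired membership in $L^1(\mathbb{R}^{n-1},dx'/(1+|x'|^n))^M$.

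The only nonroutine step is the passage from $\omega(\sqrt{n-1}\,\ell(Q))$ to a constant multiple of $\omega(\ell(Q))$ in the proof of \eqref{easyinclusion}; this is precisely the content of the quasi-decreasing property of $\omega(t)/t$ provided by Lemma~\ref{wlemma}\eqref{omega-t-incre}, and is the source of the factor $\sqrt{n-1}\,C_\omega$ in \eqref{easyinclusion}. Everything else is a direct unwinding of definitions.
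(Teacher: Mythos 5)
Your proposal is correct and follows essentially the same route as the paper: part (a) is the same direct unwinding of the definition, \eqref{easyinclusion} is obtained from the diameter bound $|x'-y'|\le\sqrt{n-1}\,\ell(Q)$ together with the quasi-decreasing property \eqref{wnonincreasing} (the paper phrases the averaging step via a double integral and Jensen rather than your pointwise bound on $|f(x')-f_Q|$, but the content and the constant $\sqrt{n-1}\,C_\omega$ are identical), and the second inclusion in \eqref{inclusionE} is proved exactly as in the paper by combining \eqref{oscmorrey}, the bound $\int_1^\infty\omega(s)\,s^{-2}\,ds\le C_\omega\,\omega(1)$ from \eqref{omega-cond:b}, and Lemma~\ref{M4lem}\eqref{estimateb} with $\varepsilon=1$.
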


\begin{proof}
Note that given any $f\in\mathscr{E}^{\omega,p}(\mathbb{R}^{n-1},\mathbb{C}^M)$ and $r>0$, 
based on \eqref{tfDDq.rD}, the fact that $\omega$ is non-decreasing, and \eqref{987tF} we may write 
\begin{align}\label{lemeq2}
\osc_p(f;r) &=\sup_{\substack{Q\subset\mathbb{R}^{n-1}\\ \ell(Q)\leq r}}
\omega(\ell(Q))\frac1{\omega(\ell(Q))}\bigg(\fint_Q|f(x')-f_Q|^p\,dx'\bigg)^{1/p}
\nonumber\\[6pt]
&\leq\omega(r)\norm{f}_{\mathscr{E}^{\omega,p}(\mathbb{R}^{n-1},\mathbb{C}^M)},
\end{align}
proving \eqref{osc-f-Ewp}. Consider next the claims in \eqref{inclusionitem}. 
Given any $f\in\dot{\mathscr{C}^{\omega}}(\mathbb{R}^{n-1},\mathbb{C}^M)$, a combination 
of \eqref{987tF}, \eqref{7tEEE}, and \eqref{wnonincreasing} yields 
\begin{align}\label{lemeq1}
\norm{f}_{\mathscr{E}^{\omega,p}(\mathbb{R}^{n-1},\mathbb{C}^M)}
&\leq\sup_{Q\subset\mathbb{R}^{n-1}}\left(\fint_Q\fint_Q\bigg(\frac{|f(x')-f(y')|}
{\omega(\ell(Q))}\bigg)^p\,dx'\,dy'\right)^{1/p} 
\nonumber\\[4pt]
&\leq\sup_{Q\subset\mathbb{R}^{n-1}}\frac{\omega(\sqrt{n-1}\ell(Q))}{\omega(\ell(Q))}\,
[f]_{\dot{\mathscr{C}^{\omega}}(\mathbb{R}^{n-1},\mathbb{C}^M)}
\nonumber\\[4pt]
&\leq\sqrt{n-1}C_\omega[f]_{\dot{\mathscr{C}^{\omega}}(\mathbb{R}^{n-1},\mathbb{C}^M)}.
\end{align}
This establishes \eqref{easyinclusion}, hence also the first inclusion in \eqref{inclusionE}. 
For the second inclusion in \eqref{inclusionE}, using Jensen's inequality, \eqref{oscmorrey}, 
and \eqref{omega-cond:b} we may write 
\begin{align}\label{tfDRV}
\int_1^{\infty}\osc_1(f;s)\frac{ds}{s^2} &\leq 
\norm{f}_{\mathscr{E}^{\omega,p}(\mathbb{R}^{n-1},\mathbb{C}^M)}\int_1^{\infty}\omega(s)\frac{ds}{s^2} 
\nonumber\\[6pt]
&\leq C_\omega\,\omega(1)\norm{f}_{\mathscr{E}^{\omega,p}(\mathbb{R}^{n-1},\mathbb{C}^M)}<\infty.
\end{align}
The desired inclusion now follows from this and \eqref{estimateb} with $\varepsilon:=1$.
\end{proof}

\section{Properties of Elliptic Systems and Their Solutions}\label{section:props-elliptic}

The following result is a particular case of more general interior estimates 
found in \cite[Theorem 11.9]{Mit13}.

\begin{theorem}\label{thmestimateder}
Let $L$ be a constant complex coefficient system as in \eqref{ars} satisfying \eqref{LegHad}. 
Then for every $p\in(0,\infty)$, $\lambda\in(0,1)$, and $m\in\mathbb{N}\cup\{0\}$ there exists 
a finite constant $C=C(L,p,m,\lambda,n)>0$ with the property that for every null-solution $u$ 
of $L$ in a ball $B(x,R)$, where $x\in\mathbb{R}^n$ and $R>0$, and every $r\in(0,R)$ one has
\begin{equation}\label{6gDDD.a4}
\sup_{z\in B(x,\lambda r)}|(\nabla^{m} u)(z)|\leq\frac{C}{r^m}\left(\fint_{B(x,r)}|u(x)|^p\,dx\right)^{1/p}.
\end{equation}
\end{theorem}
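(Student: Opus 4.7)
The plan is to prove this via the classical four-step program for constant coefficient elliptic systems: a Caccioppoli inequality, its iteration to higher derivatives, a Sobolev embedding, and self-improvement of the integrability exponent.

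I would begin with a Caccioppoli inequality. For concentric balls $B(x,s)\subset B(x,t)\subset B(x,R)$ and a cutoff $\eta\in\mathscr{C}^\infty_c(B(x,t))$ with $\eta\equiv 1$ on $B(x,s)$ and $|\nabla\eta|\lesssim(t-s)^{-1}$, testing the equation $Lu=0$ against $\eta^2\bar u$ component-wise and invoking \eqref{LegHad} in its integral G\aa rding form (which follows from Legendre-Hadamard via Plancherel applied to $\widehat{\eta u}$), together with Cauchy-Schwarz to absorb the terms containing $\nabla\eta$, gives
\begin{equation*}
\int_{B(x,s)}|\nabla u|^2\,dy\leq\frac{C}{(t-s)^2}\int_{B(x,t)}|u|^2\,dy.
\end{equation*}
Because $L$ has constant coefficients, every partial derivative $\partial^\alpha u$ remains a null-solution of $L$; iterating Caccioppoli on geometrically nested balls then yields, for each $k\in\mathbb{N}$,
\begin{equation*}
\int_{B(x,s)}|\nabla^{k}u|^2\,dy\leq\frac{C_k}{(t-s)^{2k}}\int_{B(x,t)}|u|^2\,dy.
\end{equation*}
Choosing $k-m>n/2$ and combining with the rescaled Sobolev embedding $W^{k-m,2}(B)\hookrightarrow L^\infty(B)$ produces the $L^2$-based version of the target estimate,
\begin{equation*}
\sup_{B(x,\lambda r)}|\nabla^m u|\leq\frac{C}{r^m}\Big(\fint_{B(x,r)}|u|^2\,dy\Big)^{1/2}. \qquad(\star)
\end{equation*}

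The case $0<p<2$ requires a self-improvement argument, which is the main technical obstacle. I would first address $m=0$: setting $g(\rho):=\sup_{B(x,\rho)}|u|$ for $\rho\in[\lambda r,r]$, applying $(\star)$ with $m=0$ between concentric balls of radii $\rho_1<\rho_2$ in that range, and exploiting the pointwise bound $|u|^2\leq g(\rho_2)^{2-p}|u|^p$ yields
\begin{equation*}
g(\rho_1)\leq\frac{C}{(\rho_2-\rho_1)^{n/2}}\,g(\rho_2)^{1-p/2}\,\|u\|_{L^p(B(x,r))}^{p/2}.
\end{equation*}
Young's inequality with exponent $2/p$ then converts this into the absorbable form $g(\rho_1)\leq\tfrac{1}{2}\,g(\rho_2)+C\,(\rho_2-\rho_1)^{-n/p}\,\|u\|_{L^p(B(x,r))}$, and a standard iteration lemma for non-negative, locally bounded functions on a compact interval delivers $g(\lambda r)\lesssim r^{-n/p}\,\|u\|_{L^p(B(x,r))}$.

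For the general case $m\geq 1$ and $p\in(0,\infty)$, I would combine the two results above: apply $(\star)$ between $B(x,\lambda r)$ and $B(x,(1+\lambda)r/2)$, then bound the resulting $L^2$-norm by means of $\|u\|_{L^2(B)}\leq\|u\|_{L^\infty(B)}^{1-p/2}\,\|u\|_{L^p(B)}^{p/2}$ paired with the $m=0$ estimate on $B(x,(1+\lambda)r/2)$ when $p<2$, or simply H\"older's inequality when $p\ge 2$. Arithmetic of the exponents produces the desired bound
\begin{equation*}
\sup_{B(x,\lambda r)}|\nabla^m u|\leq\frac{C}{r^m}\Big(\fint_{B(x,r)}|u|^p\,dy\Big)^{1/p}.
\end{equation*}
Except for the self-improvement step in the third paragraph, every ingredient is a direct consequence of constant-coefficient ellipticity together with the smoothness (indeed real-analyticity) of null-solutions of $L$.
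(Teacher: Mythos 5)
Your argument is correct, but a comparison with ``the paper's proof'' is moot here: the paper does not prove Theorem~\ref{thmestimateder} at all, it simply quotes it as a special case of \cite[Theorem~11.9]{Mit13}. What you supply is the standard self-contained route to such interior estimates, and all four steps are sound. In particular you correctly handle the one genuinely delicate point for Legendre--Hadamard (as opposed to pointwise strongly elliptic) systems: the energy inequality must come from the integral G\aa rding form applied to the compactly supported function $\eta u$ via Plancherel, with the commutator terms containing $\nabla\eta$ absorbed afterwards, rather than from a pointwise coercivity of the bilinear form, which \eqref{LegHad} does not give. The iteration of Caccioppoli (legitimate because $\partial^\alpha u$ is again a null-solution of the constant-coefficient operator $L$), the Sobolev embedding $W^{k-m,2}\hookrightarrow L^\infty$ for $k-m>n/2$, and the passage from $p=2$ to $p\in(0,2)$ by the interpolation $|u|^2\le g(\rho_2)^{2-p}|u|^p$ followed by Young's inequality and the standard absorption lemma for locally bounded functions on a compact interval, are all exactly as in the classical treatments (the local finiteness of $g$ needed for that lemma is available from the already-established $p=2$ case together with the smoothness of null-solutions). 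Your exponent bookkeeping in the final step ($r^{-n}\cdot r^{-n(2-p)/p}=r^{-2n/p}$, $\|u\|_{L^p}^{2-p}\|u\|_{L^p}^{p}=\|u\|_{L^p}^{2}$) checks out. In short: the paper buys the result by citation, and you buy it by the standard proof that underlies the cited reference; nothing is missing.
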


To proceed, introduce
\begin{align}\label{6gDDD.arf}
W^{1,2}_{\rm bdd}(\mathbb{R}^n_{+}) &:=\big\{u\in L^2_{\loc}(\mathbb{R}^n_{+}):\, 
u,\partial_j u\in L^2\big(\mathbb{R}^n_{+}\cap B(0,r)\big)
\nonumber\\[0pt]
&\hskip 0.80in
\,\text{ for each }\,\,j\in\{1,\dots,n\}\,\,\text{ and }\,\,r\in(0,\infty)\big\},
\end{align}
and define the Sobolev trace $\Tr$, whenever meaningful, as
\begin{equation}\label{defi-trace}
(\Tr\,u)(x'):=\lim_{r\to 0^{+}}\fint_{B((x',0),r)\cap\mathbb{R}^n_{+}}u(y)\,dy,
\qquad\,x'\in\mathbb{R}^{n-1}.
\end{equation}

The following result is taken from \cite[Corollary 2.4]{MazMitSha10}.

\begin{proposition}\label{uniqprop}
Let $L$ be a constant complex coefficient system as in \eqref{ars} satisfying \eqref{LegHad}, 
and suppose $u\in W^{1,2}_{\rm bdd}(\mathbb{R}^n_{+})$ satisfies $Lu=0$ in $\mathbb{R}^n_{+}$ and 
$\Tr\,u=0$ on $\mathbb{R}^{n-1}$. Then $u\in\mathscr{C}^{\infty}(\mathbb{R}^{n}_{+},\mathbb{C}^M)$ 
and there exists a finite constant $C>0$, independent of $u$, such that for each 
$x\in\overline{\mathbb{R}^n_{+}}$ and each $r>0$,
\begin{equation}\label{6gDDD.ar}
\sup_{\mathbb{R}^n_{+}\cap B(x,r)}|\nabla u|\leq\frac{C}{r}\sup_{\mathbb{R}^n_{+}\cap B(x,2r)}|u|.
\end{equation}
\end{proposition}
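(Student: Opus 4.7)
The claim that $u\in\mathscr{C}^{\infty}(\mathbb{R}^{n}_{+},\mathbb{C}^M)$ follows from standard interior elliptic regularity for constant-coefficient systems. Indeed, for any $y\in\mathbb{R}^n_{+}$, the ball $B(y,d(y)/2)$ lies in $\mathbb{R}^n_{+}$ with $d(y):=\dist(y,\partial\mathbb{R}^n_{+})$, and iteratively applying Theorem~\ref{thmestimateder} to $u$ and its derivatives yields smoothness in $\mathbb{R}^n_{+}$. For the gradient estimate \eqref{6gDDD.ar}, by scaling and translation one may assume $r=1$ and reduce to two cases depending on the distance from $x$ to $\partial\mathbb{R}^n_{+}$. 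If $\dist(x,\partial\mathbb{R}^n_{+})\geq 1/2$, then for each $z\in B(x,1)\cap\mathbb{R}^n_{+}$ the ball $B(z,d(z)/2)$ lies in $\mathbb{R}^n_{+}$ and Theorem~\ref{thmestimateder} gives a pointwise gradient bound by $(1/d(z))\sup_{B(z,d(z))}|u|$, which a standard covering argument turns into \eqref{6gDDD.ar}. The substantive situation is the remaining one, in which I would further assume $x\in\partial\mathbb{R}^n_{+}$.

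The key technical step is a boundary Caccioppoli-type inequality: for a smooth cutoff $\eta$ supported in $B(x,2r)$ with $\eta\equiv 1$ on $B(x,r)$ and $|\nabla\eta|\leq C/r$, I would establish
\begin{equation*}
\int_{B(x,r)\cap\mathbb{R}^n_{+}}|\nabla u|^2\,dy\leq\frac{C}{r^2}\int_{B(x,2r)\cap\mathbb{R}^n_{+}}|u|^2\,dy.
\end{equation*}
To derive this, one tests the weak form of $Lu=0$ against $\eta^2 u$, which is admissible because $\Tr u=0$ forces the trace of $\eta^2 u$ to vanish, so its zero-extension $\tilde u$ to the lower half-space lies in $W^{1,2}(\mathbb{R}^n)$. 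The coercivity then follows from the Legendre-Hadamard condition \eqref{LegHad} applied to $\tilde u$ globally on $\mathbb{R}^n$ (via a Plancherel/G\aa rding argument on the symbol of $L$), which is exactly where the vanishing-trace hypothesis is used crucially.

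To upgrade this $L^2$ bound on $\nabla u$ to the pointwise estimate \eqref{6gDDD.ar}, I would iterate on tangential derivatives: since $\Tr u=0$, for each $j\in\{1,\dots,n-1\}$ the tangential derivative $\partial_j u$ also solves $L(\partial_j u)=0$ in $\mathbb{R}^n_{+}$ with $\Tr(\partial_j u)=0$, so the boundary Caccioppoli inequality applies again on slightly smaller balls. After sufficiently many iterations (depending on $n$), Sobolev embedding in $\mathbb{R}^n_{+}\cap B(x,r)$ converts $L^2$ bounds on high-order tangential derivatives of $u$ into $L^{\infty}$ bounds on all tangential derivatives of $u$ of order $\leq 1$. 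For the normal derivative, the Legendre-Hadamard condition with $\xi=e_n$ ensures that the matrix $(a_{nn}^{\alpha\beta})_{\alpha,\beta}$ is invertible, so the equation $Lu=0$ can be algebraically solved for $\partial_n^2 u$ in terms of derivatives involving at most one normal differentiation; this allows the tangential estimates to be transferred to normal ones by bootstrapping.

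I expect the main obstacle to be the careful verification of the boundary Caccioppoli, namely checking that $\eta^2 u$ is genuinely admissible as a test function and that \eqref{LegHad} yields the correct coercivity when transported to the zero-extension on all of $\mathbb{R}^n$. The secondary difficulty is the bookkeeping in the $L^2$-to-$L^{\infty}$ iteration, in particular the careful choice of nested balls and the correct dependence of constants on $n$ and $r$. Once these are in place, a routine covering argument glues the interior and boundary estimates together to produce \eqref{6gDDD.ar}.
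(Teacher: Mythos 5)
The paper does not actually prove this proposition: it is quoted verbatim from \cite[Corollary 2.4]{MazMitSha10}, so there is no in-text argument to compare against. Your proposal is the standard self-contained route to such a boundary estimate for constant-coefficient systems with zero Dirichlet data on a flat boundary, and in outline it is sound: zero-extension across $\partial\mathbb{R}^n_{+}$ is legitimate precisely because $\Tr u=0$; the Legendre--Hadamard condition gives, via Plancherel, the exact coercivity $\Re\int a_{jk}^{\alpha\beta}\partial_j v_\beta\overline{\partial_k v_\alpha}\geq\kappa_0\|\nabla v\|_{L^2}^2$ for compactly supported $v\in W^{1,2}(\mathbb{R}^n)$; tangential difference quotients preserve the equation and the vanishing trace; and the invertibility of $(a_{nn}^{\alpha\beta})_{\alpha,\beta}$ (from \eqref{LegHad} with $\xi=e_n$) recovers the normal derivatives. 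Three points deserve care. First, under Legendre--Hadamard alone the integrand $\eta^2 a_{jk}^{\alpha\beta}\partial_j u_\beta\overline{\partial_k u_\alpha}$ has no pointwise positivity, so the coercivity must be applied to the zero-extension of $\eta u$ (not of $\eta^2 u$, as your wording suggests): one writes $B[\eta u,\eta u]$ in terms of $B[u,\eta^2 u]=0$ plus commutator terms in $\nabla\eta$, and only then invokes the Fourier-side lower bound. Second, before applying the Caccioppoli inequality to $\partial_j u$ you must know $\partial_j u\in W^{1,2}$ near the boundary with zero trace; this is obtained by running the Caccioppoli argument on tangential difference quotients and passing to the limit, a step your sketch skips. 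Third, the intermediate case $0<\dist(x,\partial\mathbb{R}^n_{+})<r/2$ cannot simply be reduced to $x\in\partial\mathbb{R}^n_{+}$ without enlarging the outer ball beyond $B(x,2r)$; recovering the stated factor $2$ requires either a Caccioppoli inequality between $B(x,r)$ and $B(x,(1+\epsilon)r)$ or a covering/iteration argument. None of these is a fatal gap, but each is where the actual work lies.
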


The following theorem is contained in \cite[Theorem 2.3 and Proposition 3.1]{BMO-MarMitMitMit16}.

\begin{theorem}\label{PoissonConvolution}
Suppose $L$ is a constant complex coefficient system as in \eqref{ars}, satisfying \eqref{LegHad}. 
Then the following statements are true. 

\begin{list}{\textup{(\theenumi)}}{\usecounter{enumi}\leftmargin=1cm \labelwidth=1cm \itemsep=0.2cm \topsep=.2cm \renewcommand{\theenumi}{\alph{enumi}}}

\item There exists a matrix-valued function 
$P^L=(P^L_{\alpha\beta})_{1\leq\alpha,\beta\leq M}:\mathbb{R}^{n-1}\to\mathbb{C}^{M\times M}$, 
called the Poisson kernel for $L$ in $\mathbb{R}^{n}_{+}$, such that 
$P^L\in{\mathscr{C}}^\infty(\mathbb{R}^{n-1})$, there exists some finite constant $C>0$ such that
\begin{equation}\label{PoissonDecay}
|P^L(x')|\leq\frac{C}{(1+|x'|^2)^{n/2}},\qquad\forall\,x'\in\mathbb{R}^{n-1},
\end{equation} 
and 
\begin{equation}\label{PoissonConvolutionId}
\int_{\mathbb{R}^{n-1}}P^L(x')\,dx'=I_{M\times M},
\end{equation} 
where $I_{M\times M}$ stands for the $M\times M$ identity matrix. 
Moreover, if for every $x'\in\mathbb{R}^{n-1}$ and $t>0$ one defines 
\begin{equation}\label{7hfDD}
K^L(x',t):=P_t^L(x'):= t^{1-n}P^L(x'/t),
\end{equation}
then $K^L\in{\mathscr{C}}^\infty\big(\overline{{\mathbb{R}}^n_{+}}\setminus B(0,\varepsilon)\big)$
for every $\varepsilon>0$ and the function $K^L=\big(K^L_{\alpha\beta}\big)_{1\leq\alpha,\beta\leq M}$
satisfies 
\begin{equation}\label{uahgab-UBVCX}
LK^L_{\cdot\beta}=0\,\,\text{ in }\,\,\mathbb{R}^{n}_{+}
\,\,\text{ for each }\,\,\beta\in\{1,\dots,M\},
\end{equation}
where $K^L_{\cdot\beta}:=\big(K^L_{\alpha\beta}\big)_{1\leq\alpha\leq M}$ is the $\beta$-th column in $K^L$.

\item\label{convo-poiss-sols} 
For each function $f=(f_\beta)_{1\le\beta\le M}\in L^1\big(\mathbb{R}^{n-1},\frac{dx'}{1+|x'|^n}\big)^M$
define, with $P^L$ as above, 
\begin{equation}\label{ConvDef} 
u(x',t):=(P^L_t\ast f)(x'),\qquad\forall\,(x',t)\in\mathbb{R}^{n}_{+}. 
\end{equation} 
Then $u$ is meaningfully defined, via an absolutely convergent integral, and satisfies
\begin{equation}\label{exist:u2****}
u\in\mathscr{C}^\infty(\mathbb{R}^n_{+},{\mathbb{C}}^M),\quad\,\, 
Lu=0\,\,\text{ in }\,\,\mathbb{R}^{n}_{+},\quad\,\,
\restr{u}{\partial\mathbb{R}^n_{+}}^{{}^{\rm nt.lim}}=f\text{ a.e. on }\,\mathbb{R}^{n-1}.
\end{equation} 
Furthermore, there exists a finite constant $C>0$ such that 
\begin{equation}\label{ConvDer}
|(\nabla u)(x',t)|\leq\frac{C}{t}\int_1^{\infty}\osc_1(f;st)\frac{ds}{s^{2}},
\qquad\forall\,(x',t)\in\mathbb{R}^{n}_{+},
\end{equation}
and, for each cube $Q\subset\mathbb{R}^{n-1}$, 
\begin{equation}\label{PoissonConvolutioneq}
\left(\int_0^{\ell(Q)}\fint_Q|(\nabla u)(x',t)|^2\,t\,dx'\,dt\right)^{1/2}
\leq C\int_1^{\infty}\osc_1(f; s\ell(Q))\frac{ds}{s^2}.
\end{equation}
\end{list}
\end{theorem}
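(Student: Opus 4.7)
The plan is to construct $P^L$ via the partial Fourier transform in the tangential variables and then to read off both the analytic properties of the kernel and the convolution estimates from the resulting symbol. First I would split $L = A\partial_n^2 + B(\partial_{x'})\partial_n + C(\partial_{x'})$ with $A=(a_{nn}^{\alpha\beta})_{\alpha,\beta}$, and apply the Fourier transform in $x'$ to $Lu=0$ to obtain the ODE $A\partial_t^2\hat{u} + B(i\xi')\partial_t\hat{u} + C(i\xi')\hat{u}=0$ for $t>0$. The Legendre-Hadamard condition \eqref{LegHad} forces $A$ to be invertible and ensures that for each $\xi'\neq 0$ the matrix pencil $A\lambda^2 + B(i\xi')\lambda + C(i\xi')$ has exactly $M$ roots (with multiplicity) in the open left half-plane. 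Restricting to the corresponding decaying subspace on $(0,\infty)$ and matching $\hat{u}(\xi',0)=\hat{f}(\xi')$ produces a Fourier multiplier $\hat{u}(\xi',t)=e^{-tM(\xi')}\hat{f}(\xi')$ where the symbol $M(\xi')$ is positively homogeneous of degree one. Setting $K^L(x',t):=\mathcal{F}^{-1}_{\xi'}\big[e^{-tM(\xi')}\big](x')$, the homogeneity forces $K^L$ to be homogeneous of degree $1-n$ in $(x',t)$, which is precisely \eqref{7hfDD} with $P^L(x'):=K^L(x',1)$. Smoothness of $K^L$ away from the origin is obtained by repeated integration by parts against the symbol, the decay \eqref{PoissonDecay} follows from $n$-fold integration by parts in the Fourier representation, the normalization \eqref{PoissonConvolutionId} comes from evaluating $\widehat{P^L}(0)=e^{-M(0)}=I$, and \eqref{uahgab-UBVCX} holds by construction.

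For part (b), the decay \eqref{PoissonDecay} immediately implies that the convolution in \eqref{ConvDef} converges absolutely for every $f$ in $L^1\big(\mathbb{R}^{n-1},(1+|x'|^n)^{-1}dx'\big)^M$. Smoothness of $u$ and the identity $Lu=0$ follow by differentiating under the integral sign, using uniform bounds on derivatives of $K^L$ on compacta of $\overline{\mathbb{R}^n_{+}}\setminus\{0\}$ together with \eqref{uahgab-UBVCX}. The nontangential trace claim in \eqref{exist:u2****} follows from \eqref{PoissonConvolutionId} and the fact that, thanks to \eqref{PoissonDecay}, the family $\{P_t^L\}_{t>0}$ is dominated by a radial, decreasing $L^1$ majorant; the standard Hardy-Littlewood maximal function argument then gives convergence at every Lebesgue point of $f$, and hence at a.e. $x'\in\mathbb{R}^{n-1}$.

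The quantitative heart of the theorem, and the main obstacle, is proving the two gradient estimates \eqref{ConvDer} and \eqref{PoissonConvolutioneq}. The idea is to exploit the cancellation $\int_{\mathbb{R}^{n-1}}(\nabla K^L)(x'-y',t)\,dy'=0$ (obtained by differentiating \eqref{PoissonConvolutionId} in the tangential variable, and by the homogeneity of $K^L$ for the $t$-derivative) to rewrite $(\nabla u)(x',t)=\int(\nabla K^L)(x'-y',t)[f(y')-c]\,dy'$ for any constant vector $c$. I would then decompose $\mathbb{R}^{n-1}$ into dyadic annuli $A_k:=\{y':2^{k-1}t\leq |x'-y'|<2^k t\}$ about $x'$ (with $A_0:=B(x',t)$), choose on each $A_k$ the constant $c=f_{Q_k}$ for the cube $Q_k$ of side-length $2^k t$ concentric with $A_k$, and use the pointwise bound $|\nabla K^L(z',t)|\lesssim t(|z'|^2+t^2)^{-n/2}$ furnished by the homogeneity and smoothness of $P^L$. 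Each annular contribution is then controlled by a constant multiple of $\osc_1(f;2^k t)$ with a geometrically decaying prefactor, and summing in $k$ (equivalent to the integral in \eqref{ConvDer} via a standard comparison) yields \eqref{ConvDer}. The square-function estimate \eqref{PoissonConvolutioneq} then follows by squaring \eqref{ConvDer}, integrating with weight $t\,dt$ over $(0,\ell(Q))$ and averaging over $Q$, applying Minkowski's inequality to pull the integral in $s$ outside, and changing variables $\tau:=st$ so that the monotonicity of $\osc_1(f;\cdot)$ in the scale parameter converts the resulting double integral into the single integral on the right-hand side of \eqref{PoissonConvolutioneq}.
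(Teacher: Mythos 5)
The paper itself offers no proof of this theorem: it is imported verbatim from \cite{BMO-MarMitMitMit16} (Theorem~2.3 and Proposition~3.1 there), so there is no internal argument to compare yours against. Judged on its own merits, your outline of part (a) and of the first half of part (b) follows the standard Agmon--Douglis--Nirenberg construction and is essentially sound, with two caveats. First, the decay \eqref{PoissonDecay} is strictly stronger than what homogeneity of degree $1-n$ gives (that alone only yields $|P^L(x')|\lesssim(1+|x'|)^{1-n}$); the extra factor comes from the vanishing of $K^L(\cdot,0)$ away from the origin, and your ``$n$-fold integration by parts'' lands exactly on the borderline non-integrable singularity $|\nabla^n_{\xi'}M(\xi')|\sim|\xi'|^{1-n}$ of the (non-smooth at $\xi'=0$) symbol, so it produces a logarithmic loss unless supplemented by a dyadic decomposition or the explicit residue formula. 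Second, in the annular argument for \eqref{ConvDer} the cancellation $\int(\nabla K^L)(x'-y',t)\,dy'=0$ lets you subtract only one global constant; the annulus-dependent constants $f_{Q_k}$ must be reinstated via the telescoping bound $|f_{Q_k}-f_{Q_0}|\lesssim\sum_{j\le k}\osc_1(f;2^jt)$, and the kernel bound you need is $|\nabla K^L(z',t)|\lesssim(|z'|+t)^{-n}$ (homogeneous of degree $-n$, not $1-n$ as written). These points are all repairable.

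The genuine gap is the final step. The Carleson-type estimate \eqref{PoissonConvolutioneq} does \emph{not} follow from squaring \eqref{ConvDer} and integrating against $t\,dt$: that route produces $\int_0^{\ell(Q)}\Phi(t)^2\,\frac{dt}{t}$ with $\Phi(t):=\int_1^\infty\osc_1(f;st)\,s^{-2}ds$, which diverges whenever $\osc_1(f;r)$ does not tend to $0$ as $r\to0^{+}$ --- e.g.\ for a generic $f\in\mathrm{BMO}$, where $\Phi\approx\|f\|_{\mathrm{BMO}}$ is bounded below while $\int_0^{\ell(Q)}\frac{dt}{t}=\infty$ --- even though the right-hand side of \eqref{PoissonConvolutioneq} remains finite. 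Minkowski's inequality cannot rescue this, since the obstruction is the infinite measure $\frac{dt}{t}$ near $t=0$, not the order of integration. Proving \eqref{PoissonConvolutioneq} requires a genuine $L^2$ argument: write $f=(f-f_{2Q})\mathbf{1}_{2Q}+(f-f_{2Q})\mathbf{1}_{\mathbb{R}^{n-1}\setminus2Q}+f_{2Q}$, observe that the constant contributes nothing to $\nabla u$, control the local piece by the square-function (Littlewood--Paley) bound $\int_0^\infty\int_{\mathbb{R}^{n-1}}|t(\nabla P^L_t\ast g)|^2\,\frac{dx'\,dt}{t}\lesssim\|g\|_{L^2}^2$ combined with $\|(f-f_{2Q})\mathbf{1}_{2Q}\|_{L^2}\lesssim|Q|^{1/2}\osc_1(f;2\ell(Q))$, and treat the far piece by your pointwise annular estimates. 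Without this quadratic estimate the key inequality of part (b) --- the one the whole Morrey--Campanato theory of the paper rests on --- is unproved.
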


Our next proposition contains a number of a priori estimates comparing 
$\norm{u}_{**}^{(\omega,q)}$, corresponding to different values of $q$, for solutions 
of $Lu=0$ in ${\mathbb{R}}^n_{+}$. To set the stage, we first state some simple estimates 
which are true for any function $u\in\mathscr{C}^{1}(\mathbb{R}^n_{+},\mathbb{C}^M)$:
\begin{equation}\label{uqeq:1}
\norm{u}_{**}^{(\omega,p)}\leq\norm{u}_{**}^{(\omega,q)}
\leq C\norm{u}_{**}^{(\omega,\exp)},\qquad 0<p\le q<\infty,
\end{equation}
where $C=C(q)\geq 1$. Indeed, the first estimate follows at once from Jensen's inequality. 
The second estimate is a consequence of the fact that $t^{\max \{1 , q \} }\leq C(e^{t}-1)$ (with $C>0$ depending on $\max \{ 1,q \}$) for each $t\in(0,\infty)$ 
and the definition of $\|\cdot\|_{\exp L,Q}$ (cf. \eqref{ygFF}).

\begin{proposition}\label{prop:proper-sols}
Let $L$ be a constant complex coefficient system as in \eqref{ars} satisfying the strong ellipticity 
condition \eqref{LegHad}, and let $u\in\mathscr{C}^{\infty}(\mathbb{R}^{n}_{+},\mathbb{C}^M)$ be such 
that $Lu=0$ in $\mathbb{R}^n_+$. Then the following statements hold.

\begin{list}{\textup{(\theenumi)}}{\usecounter{enumi}\leftmargin=1cm \labelwidth=1cm \itemsep=0.2cm \topsep=.2cm \renewcommand{\theenumi}{\alph{enumi}}}

\item\label{int-Whitney} 
For every $q\in(0,\infty)$ there there exists a finite constant $C=C(L,n,q)\geq 1$ such that for 
each $(x',t)\in\mathbb{R}^{n}_{+}$ one has
\begin{equation}\label{eq:inter-est}
t\,|(\nabla u)(x',t)|\leq C
\bigg(\fint_{|x'-y'|<\frac{t}{2}}\bigg(\int_{t/2}^{3t/2}|(\nabla u)(y',s)|^2\,s\,ds\bigg)^{q/2}\,dy'\bigg)^{1/q}.
\end{equation}

\item\label{con-le-carl} 
There exists a finite constant $C=C(L,n)\geq 1$ such that for each cube $Q\subset\mathbb{R}^{n-1}$
and each $x'\in\mathbb{R}^{n-1}$ one has
\begin{align}\label{eq:conical-carl-q:1}
\bigg(\int_0^{\ell(Q)}|(\nabla u)(x',t)|^2\,t\,dt\bigg)^{1/2}
\leq C\,\bigg(\int_{0}^{2\ell(Q)}\int_{|x'-y'|<s}|(\nabla u)(y',s)\,s|^2\,dy'\frac{ds}{s^n}\bigg)^{1/2}.
\end{align}
Furthermore, whenever $2\leq q<\infty$ there exists a finite constant $C=C(L,n,q)\geq 1$ such that 
for each cube $Q\subset\mathbb{R}^{n-1}$ and each $x'\in\mathbb{R}^{n-1}$ one has
\begin{multline}\label{eq:conical-carl-q:2}
\bigg(\fint_{Q}\bigg(\int_{0}^{\ell(Q)}\int_{|x'-y'|<s}
|(\nabla u)(y',s)\,s|^2\,dy'\frac{ds}{s^n}\bigg)^{q/2}\,dx'\bigg)^{1/q}
\\[4pt]
\leq C\,\bigg(\fint_{3Q}\bigg(\int_0^{3\ell(Q)}|(\nabla u)(x',t)|^2\,t\,dt\bigg)^{q/2}\,dx'\bigg)^{1/q}.
\end{multline}

\item\label{sup-Comega} 
There exists a finite constant $C=C(L,n)\geq 1$ such that for each growth function $\omega$ one has 
\begin{equation}\label{lemsmalleq}
\norm{u}_{**}^{(\omega,\infty)}\leq C[u]_{\dot{\mathscr{C}^{\omega}}(\mathbb{R}^{n}_{+},\mathbb{C}^M)}.
\end{equation}

\item\label{sup-q-car} 
For every $q\in(0,\infty)$ there exists a finite constant $C=C(L,n,q)\geq 1$ such that 
for each growth function $\omega$ satisfying \eqref{omega-cond:b} one has 
\begin{equation}\label{uqeq-infty}
\norm{u}_{**}^{(\omega,\infty)}\leq C\,C_\omega\norm{u}_{**}^{(\omega,q)}.
\end{equation}

\item\label{comp-car-p-q-2} 
There exists a finite constant $C=C(L,n)\geq 1$ such that for each growth function 
$\omega$ satisfying \eqref{omega-cond:b} one has 
\begin{equation}\label{uqeq}
\norm{u}_{**}^{(\omega,\exp)}\leq C(C_\omega)^2\norm{u}_{**}^{(\omega,2)}.
\end{equation}

\item\label{CW-car-q} 
Let $\omega$ be a growth function satisfying \eqref{omega-cond:a} as well as \eqref{omega-cond:b}, 
and define $W(t)$ as in \eqref{omega-cond:WDef}. Then
\begin{equation}\label{mainthmBMOeq4}
[u]_{\dot{\mathscr{C}}^{W}(\mathbb{R}^{n}_{+},\mathbb{C}^M)} 
\leq C_\omega (2+C_\omega)\norm{u}_{**}^{(\omega,\infty)},
\end{equation}
and, if the latter quantity is finite, $u\in\dot{\mathscr{C}}^{W}(\overline{\mathbb{R}^n_{+}},\mathbb{C}^M)$ 
in the sense of Lemma~\ref{wlemma}\eqref{holderclosure}.

\item\label{car-q-sup} 
Let $\omega$ be a growth function satisfying 
\begin{equation}\label{omega-cond:second}
\int_0^{t}\omega(s)\frac{ds}{s}\leq C_\omega'\,\omega(t),\qquad\forall\,t\in(0,\infty),
\end{equation}
for some finite constant $C_\omega'>1$. Then
\begin{equation}\label{uqeq-infty:converse}
\norm{u}_{**}^{(\omega,\exp)}\le (C_\omega')^{1/2}\norm{u}_{**}^{(\omega,\infty)}.
\end{equation}

\item\label{all-comp} 
Let $\omega$ be a growth function satisfying \eqref{omega-cond:main}. Then for every $q\in(0,\infty)$ 
\begin{equation}\label{relation-all-norms:1}
\norm{u}_{**}^{(\omega,q)}\approx\norm{u}_{**}^{(\omega,\exp)}\approx\norm{u}_{**}^{(\omega,\infty)}
\approx [u]_{\dot{\mathscr{C}}^{w}(\mathbb{R}^{n}_{+},\mathbb{C}^M)} 
\end{equation}
where the implicit constants depend only on $L$, $n$, $q$, and the constant $C_0$ 
in \eqref{omega-cond:main}. In particular, if $\norm{u}_{**}^{(\omega,q)}<\infty$ 
for some $q\in(0,\infty]$, or $\norm{u}_{**}^{(\omega,\exp)}<\infty$, then 
$u\in\dot{\mathscr{C}}^{\omega}(\overline{\mathbb{R}^n_{+}},\mathbb{C}^M)$ 
in the sense of Lemma~\ref{wlemma}\eqref{holderclosure}.
\end{list}
\end{proposition}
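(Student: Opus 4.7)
The plan is to establish the eight parts essentially in the order listed, since each later part invokes either the interior estimates recorded in \eqref{int-Whitney}--\eqref{sup-Comega} or a seminorm comparison proved earlier. The fact used throughout is that every partial derivative of a null-solution of $L$ is again a null-solution, so Theorem~\ref{thmestimateder} applies both to $u$ and to $\nabla u$ on Whitney balls $B((x',t),ct)\subset\mathbb{R}^{n}_{+}$. For part~\eqref{int-Whitney} I apply Theorem~\ref{thmestimateder} with $m=0$ to the null-solution $\nabla u$ on $B((x',t),t/4)$, obtaining the pointwise $L^{q}$ mean bound $|(\nabla u)(x',t)|^{q}\lesssim\fint_{B((x',t),t/4)}|(\nabla u)(y,s)|^{q}\,dy\,ds$, and I reorganize the right-hand side by inserting the weight $s$ under the inner integral using $s\sim t$ on that ball. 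Part~\eqref{con-le-carl} follows by the same philosophy: applying \eqref{int-Whitney} with $q=2$ pointwise in $t$ and integrating against $t\,dt$ on $(0,\ell(Q))$ produces the first inequality via Fubini, since the Whitney ball at height $t$ sits inside the cone $\{(y',s):|x'-y'|<s\}$ for $s\sim t$; the second inequality requires a lateral enlargement from $Q$ to $3Q$ to accommodate cones based at $x'\in Q$ reaching $|y'-x'|<s$ with $s\sim\ell(Q)$, and the extension from $q=2$ to $q\geq 2$ is handled by Minkowski's integral inequality in the tent-space formulation. Part~\eqref{sup-Comega} is immediate: Theorem~\ref{thmestimateder} with $m=1$ applied to the null-solution $u-u(x',t)$ on $B((x',t),t/2)$ dominates its supremum there by $[u]_{\dot{\mathscr{C}}^{\omega}}\,\omega(t)$ by monotonicity of $\omega$, delivering \eqref{lemsmalleq}.

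For part~\eqref{sup-q-car}, I fix $(x',t)$, centre a cube $Q$ of side length $2t$ at $x'$ so that the Whitney region in \eqref{eq:inter-est} sits inside $Q\times(0,\ell(Q))$, dominate the inner $L^{q}$ average by $\omega(\ell(Q))\,\norm{u}_{**}^{(\omega,q)}$, and use doubling (Lemma~\ref{wlemma}\eqref{item:wdoubling}) to replace $\omega(2t)$ by $2C_{\omega}\omega(t)$. For part~\eqref{car-q-sup}, the hypothesis $\norm{u}_{**}^{(\omega,\infty)}<\infty$ yields the pointwise bound $|(\nabla u)(x',t)|\leq(\omega(t)/t)\,\norm{u}_{**}^{(\omega,\infty)}$; squaring and integrating against $t\,dt$ on $(0,\ell(Q))$, then exploiting monotonicity of $\omega$ together with \eqref{omega-cond:second}, produces $\int_{0}^{\ell(Q)}|(\nabla u)(x',t)|^{2}\,t\,dt\leq C_{\omega}'\,\omega(\ell(Q))^{2}\,(\norm{u}_{**}^{(\omega,\infty)})^{2}$ uniformly in $x'\in Q$, and since the Luxemburg $\exp L$-norm of a function pointwise bounded by a constant is controlled by that constant, \eqref{uqeq-infty:converse} follows. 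I expect part~\eqref{comp-car-p-q-2} to be the main obstacle: starting from the local square function $F_{Q}(x'):=\bigl(\int_{0}^{\ell(Q)}|\nabla u(x',t)|^{2}\,t\,dt\bigr)^{1/2}$, part~\eqref{con-le-carl} dominates $F_{Q}(x')^{2}$ by a cone/tent integral of $|\nabla u|^{2}\,s$ that, by the definition of $\norm{u}_{**}^{(\omega,2)}$ together with standard Carleson-measure arithmetic, obeys an $L^{2}$ Morrey-Campanato-type bound of growth $\omega$ on every sub-cube of $Q$; the generalized John--Nirenberg inequality established in Appendix~\ref{section:JN} is tailor-made to upgrade such an $L^{2}$ Morrey-Campanato bound into exponential decay of the level sets of $F_{Q}$, which is precisely the quantitative content needed to control $\norm{F_{Q}}_{\exp L,Q}$ by $\omega(\ell(Q))\,\norm{u}_{**}^{(\omega,2)}$ up to the factor $(C_{\omega})^{2}$.

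For part~\eqref{CW-car-q}, I integrate the pointwise bound $|\nabla u(z',r)|\leq(\omega(r)/r)\,\norm{u}_{**}^{(\omega,\infty)}$ along a suitable piecewise-linear path between any two given points $(x',t),(y',s)\in\mathbb{R}^{n}_{+}$. Writing $d:=|(x',t)-(y',s)|$ and assuming without loss of generality $t\leq s$: when $d\leq t$ the straight segment from $(x',t)$ to $(y',s)$ stays inside $\{t\leq r\leq 2t\}$, and Lemma~\ref{wlemma}\eqref{omega-t-incre} combined with \eqref{wW} converts $d\,\omega(t)/t$ into a multiple of $W(d)$. When $d>t$, I use a three-leg path rising vertically from $(x',t)$ to height $T:=\max(d,s)$, proceeding horizontally to $(y',T)$, then descending vertically to $(y',s)$: each vertical leg contributes $\int(\omega(r)/r)\,dr$, which is a difference of values of $W$ bounded by a multiple of $W(d)$ via doubling of $W$ (Lemma~\ref{wlemma3}), while the horizontal leg contributes $|x'-y'|\,\omega(T)/T\leq\omega(T)\leq C_{\omega}W(T)$ by \eqref{wW}. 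Summing these contributions produces \eqref{mainthmBMOeq4}; the trailing statement about $u$ extending to $\dot{\mathscr{C}}^{W}(\overline{\mathbb{R}^{n}_{+}},\mathbb{C}^{M})$ then follows from Lemma~\ref{wlemma}\eqref{holderclosure} applied with the growth function $W$, whose hypotheses are validated by Lemma~\ref{wlemma3}.

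Finally, for part~\eqref{all-comp}, observe that \eqref{omega-cond:main} implies both \eqref{omega-cond:b} and \eqref{omega-cond:second}, so all of \eqref{sup-Comega}--\eqref{car-q-sup} apply; moreover \eqref{omega-cond:main} gives $W(t)\leq C_{0}\,\omega(t)$ while \eqref{wW} gives $\omega(t)\leq C_{\omega}\,W(t)$, whence $W\approx\omega$ and $\dot{\mathscr{C}}^{W}(\mathbb{R}^{n}_{+},\mathbb{C}^{M})=\dot{\mathscr{C}}^{\omega}(\mathbb{R}^{n}_{+},\mathbb{C}^{M})$ with equivalent seminorms. Combining \eqref{sup-Comega} with \eqref{mainthmBMOeq4} under this identification yields $[u]_{\dot{\mathscr{C}}^{\omega}}\approx\norm{u}_{**}^{(\omega,\infty)}$; combining \eqref{sup-q-car}, \eqref{uqeq:1}, and \eqref{uqeq-infty:converse} yields $\norm{u}_{**}^{(\omega,q)}\approx\norm{u}_{**}^{(\omega,\exp)}\approx\norm{u}_{**}^{(\omega,\infty)}$; assembling these two chains produces \eqref{relation-all-norms:1}. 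The final assertion of \eqref{all-comp} follows from the equivalences just proved together with Lemma~\ref{wlemma}\eqref{holderclosure} applied with $E:=\mathbb{R}^{n}_{+}$.
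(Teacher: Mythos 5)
Your plan follows the paper's proof quite closely in structure and in the choice of tools: interior estimates for the null-solution $\nabla u$ on Whitney balls for (a)--(d), Fubini for the first inequality in (b), the John--Nirenberg lemma of Appendix~\ref{section:JN} for (e), a piecewise path integration for (f), a pointwise bound plus \eqref{omega-cond:second} for (g), and the assembly of the earlier items for (h). Two steps, however, would fail as you describe them.

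First, in part (a) you apply Theorem~\ref{thmestimateder} with exponent $q$ itself and then propose to ``reorganize'' $\fint_{B}|\nabla u|^{q}$ into the target expression, whose inner integral in $s$ is an $L^{2}$ average. For $q\le 2$, Jensen's inequality in the $s$-variable gives $\fint_{s}|\nabla u|^{q}\,ds\le\big(\fint_{s}|\nabla u|^{2}\,ds\big)^{q/2}$ and your step goes through; but for $q>2$ this inequality reverses, so the reorganization is impossible. The repair is what the paper does: invoke the interior estimate with exponent $p:=\min\{q,2\}$, apply Jensen in $s$ (legitimate since $p\le 2$) to produce the inner $L^{2}$ average, and only then apply Jensen in $y'$ (legitimate since $p\le q$) to reach the outer $L^{q}$ average.

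Second, in part (b) you assert that the passage from the vertical to the conical square function in $L^{q/2}$ for $q\ge 2$, i.e.\ \eqref{eq:conical-carl-q:2}, ``is handled by Minkowski's integral inequality.'' It is not. Applying Minkowski to $x'\mapsto\int\!\!\int\mathbf{1}_{\{|x'-y'|<s\}}|G(y',s)|^{2}\,dy'\,\tfrac{ds}{s^{n}}$ in $L^{q/2}(Q,dx'/|Q|)$ replaces the $L^{q/2}(dy')$ norm of the vertical square function appearing on the right-hand side of \eqref{eq:conical-carl-q:2} by an $L^{1}(dy')$ average weighted by $s^{(n-1)2/q-n}$; this weight is strictly more singular than $s^{-1}$ as $s\to 0$ when $q>2$, and the $y'$-integrability does not match the target. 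The correct argument, which is the one in the paper, is the Fefferman--Stein duality: pair the squared conical functional against $h\ge 0$ with $\|h\|_{L^{(q/2)'}(Q,dx'/|Q|)}=1$, observe that $s^{1-n}\int_{|y'-x'|<s}h(x')\,dx'\le C(Mh)(y')$, and conclude by the boundedness of the Hardy--Littlewood maximal operator on $L^{(q/2)'}(\mathbb{R}^{n-1})$. With these two repairs your outline coincides with the paper's proof; the remaining variations (e.g.\ the three-leg path rising to height $\max(d,s)$ in part (f), which, after noting $s<2d$ in the relevant case, yields \eqref{mainthmBMOeq4} with a possibly different but still admissible power of $C_\omega$) are immaterial.
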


\begin{proof}
We start by proving \eqref{int-Whitney}. Fix $(x',t)\in\mathbb{R}^{n}_{+}$ and let $Q_{x',t}$
be the cube in $\mathbb{R}^{n-1}$ centered at $x'$ with side-length $t$. Then from 
Theorem~\ref{thmestimateder} (presently used with $m:=0$ and $p:=\min\{q,2\}$) and Jensen's inequality 
we obtain 
\begin{align}\label{step2}
|(\nabla u)(x',t)| &\leq C\left(\fint_{|(y',s)-(x',t)|<\frac{t}{2}}|(\nabla u)(y',s)|^p\,dy'\,ds\right)^{1/p}
\nonumber\\[4pt] 
&\leq C\left(\fint_{|x'-y'|<\frac{t}{2}}\bigg(\fint_{(t/2,3t/2)}|(\nabla u)(y',s)|^2\,ds\bigg)^{p/2}\,dy'\right)^{1/p}
\nonumber\\[4pt] 
&\leq C\bigg(\fint_{|x'-y'|<\frac{t}{2}}\bigg(\fint_{(t/2,3t/2)}|(\nabla u)(y',s)|^2\,ds\bigg)^{q/2}\,dy'\bigg)^{1/q}
\nonumber\\[4pt] 
&=Ct^{-1}\bigg(\fint_{|x'-y'|<\frac{t}{2}}\bigg(\int_{t/2}^{3t/2}|(\nabla u)(y',s)|^2\,s\,ds\bigg)^{q/2} dy'\bigg)^{1/q},
\end{align}
proving \eqref{eq:inter-est}. Turning our attention to \eqref{con-le-carl}, fix 
a cube $Q\subset\mathbb{R}^{n-1}$ along with a point $x'\in\mathbb{R}^{n-1}$. 
First, integrating \eqref{eq:inter-est} written for $q:=2$ yields 
\begin{align}\label{6gDDD.atrf}
\int_{0}^{\ell(Q)}|(\nabla u)(x',t)|^2\,t\,dt 
&\leq C\int_{0}^{\ell(Q)}\frac1{t^{n+1}}\int_{t/2}^{3t/2}\int_{|x'-y'|<s}|(\nabla u)(y',s)|^2\,s\,dy'\,ds\,t\,dt
\nonumber\\[4pt] 
&\leq C\int_{0}^{2\ell(Q)}\int_{|x'-y'|<s}|(\nabla u)(y',s)|^2\int_{2s/3}^{2s}\,t^{-n}\,dt\,dy'\,s\,ds
\nonumber\\[4pt] 
&=C\int_{0}^{2\ell(Q)}\int_{|x'-y'|<s}|(\nabla u)(y',s)\,s|^2\,dy'\,\frac{ds}{s^n},
\end{align}
and this readily leads to the estimate in \eqref{eq:conical-carl-q:1}. To justify \eqref{eq:conical-carl-q:2}, 
observe that for each nonnegative function $h\in L^1_{\rm loc}({\mathbb{R}}^{n-1})$ we have
\begin{align}\label{eq:Con-vert}
&\fint_{Q}\bigg(\int_{0}^{\ell(Q)}\int_{|x'-y'|<s}|(\nabla u)(y',s)\,s|^2\,dy'\frac{ds}{s^n}\bigg)h(x')\,dx'
\nonumber\\[4pt]
&\qquad\leq 3^n\fint_{3Q}\int_{0}^{\ell(Q)}\bigg(\frac1{s^{n-1}}\int_{|y'-x'|<s}h(x')\,dx'\bigg) 
|(\nabla u)(y',s)|^2\,s\,ds\,dy'
\nonumber\\[4pt]
&\qquad\leq C_n\fint_{3Q}\bigg(\int_{0}^{3\ell(Q)}|(\nabla u)(y',s)|^2\,s\,ds\bigg)(Mh)(x')\,dx',
\end{align}
where $M$ is the Hardy-Littlewood maximal operator in ${\mathbb{R}}^{n-1}$. 
Note that if $q=2$ then \eqref{eq:Con-vert} gives at once \eqref{eq:conical-carl-q:2}
by taking $h=1$ in $Q$ and using that $Mh\leq 1$. On the other hand, if $q>2$, we  
impose the normalization condition $\|h\|_{L^{(q/2)'}(Q,dx'/|Q|)}=1$ and 
then rely on \eqref{eq:Con-vert} and H\"older's inequality to write
\begin{align}\label{eq:Con-vert-q>2}
&\fint_{Q}\bigg(\int_{0}^{\ell(Q)}\int_{|x'-y'|<s}|(\nabla u)(y',s)\,s|^2\,dy'\frac{ds}{s^n}\bigg)h(x')\,dx'
\nonumber\\[4pt]
&\qquad\leq C_n\bigg(\fint_{3Q}\bigg(\int_{0}^{3\ell(Q)}|(\nabla u)(y',s)|^2\,s\,ds\bigg)^{q/2}\,dx'\bigg)^{2/q}
\,\|Mh\|_{L^{(q/2)'}(Q,dx'/|Q|)}
\nonumber\\[4pt]
&\qquad\leq C\bigg(\fint_{3Q}\bigg(\int_{0}^{3\ell(Q)}|(\nabla u)(y',s)|^2\,s\,ds\bigg)^{q/2}\,dx'\bigg)^{2/q},
\end{align}
bearing in mind that $M$ is bounded in $L^{(q/2)'}(\mathbb{R}^{n-1})$, given that $q>2$. 
Taking now the supremum over all such functions $h$ yields \eqref{eq:conical-carl-q:2}
on account of Riesz' duality theorem. 

As regards \eqref{sup-Comega}, fix $(x',t)\in\mathbb{R}^n_{+}$ and use 
Theorem~\ref{thmestimateder} together with the fact that $\omega$ is a non-decreasing function
to write 
\begin{align}\label{BMOuniqeq1}
|(\nabla u)(x',t)| &=\big|\nabla(u(\cdot)-u(x',t))(x',t)\big| 
\nonumber\\[4pt]
&\leq\frac{C}{t}\fint_{|(y',s)-(x',t)|<t/2}|u(y',s)-u(x',t)|\,dy'\,ds
\nonumber\\[4pt]
&\leq C[u]_{\dot{\mathscr{C}^{\omega}}(\mathbb{R}^{n}_{+},\mathbb{C}^M)}\frac{\omega(t)}{t}.
\end{align} 
In view of \eqref{yRRFF}, this readily establishes \eqref{lemsmalleq}.
 
The claim in \eqref{sup-q-car} is proved by combining \eqref{eq:inter-est} and \eqref{wdoubling}, which permit us 
to estimate (recall that $Q_{x',t}$ denotes the cube in $\mathbb{R}^{n-1}$ centered at $x'$ with side-length $t$)
\begin{align}\label{step4dedawe}
\norm{u}_{**}^{(\omega,\infty)}
&\leq C\,\sup_{(x',t)\in\mathbb{R}^{n}_{+}}\frac{1}{\omega(t)}
\bigg(\fint_{(3/2) Q_{x',t}}\bigg(\int_{0}^{3t/2}|(\nabla u)(y',s)|^2\,s\,ds\bigg)^{q/2}\,dy'\bigg)^{1/q}
\nonumber\\[4pt] 
&\leq C\,C_\omega\norm{u}_{**}^{(\omega,q)}.
\end{align}

Going further, consider the claim in \eqref{comp-car-p-q-2}. For starters, observe that the 
convexity of the function $t\mapsto e^{t}-1$ readily implies that $2^{n-1}(e^{t}-1)\leq e^{2^{n-1}t}-1$ 
for every $t>0$ which, in view of \eqref{eq:Lux-norm}, allows us to write 
\begin{equation}\label{eq:exp-Q-2Q}
\|f\|_{\exp L,Q}\leq 2^{n-1}\,\|f\|_{\exp L,2Q}
\end{equation}
for each cube $Q$ in ${\mathbb{R}}^{n-1}$ and each Lebesgue measurable function $f$ on $Q$.

Turning to the proof of \eqref{uqeq} in earnest, by homogeneity we may assume that 
$\norm{u}^{(\omega,2)}_{**}=1$ to begin with. We are going to use Lemma~\ref{lem:appendix}. 
As a prelude, define 
\begin{equation}\label{6agRD4r7}
F(y',s):=|(\nabla u)(y',s)\,s|,\qquad\forall\,(y',s)\in\mathbb{R}^n_{+},
\end{equation}
and, for each cube $Q$ in ${\mathbb{R}}^{n-1}$ and each threshold $N\in(0,\infty)$, consider the set 
\begin{equation}\label{6gDDD.a4r9jh}
E_{N,Q}:=\bigg\{x'\in Q:\,\frac{1}{\omega(\ell(Q))}\bigg(\int_0^{\ell(Q)}
\int_{|x'-y'|<\kappa s}|F(y',s)|^2\,dy'\frac{ds}{s^n}\bigg)^{1/2}>N\bigg\}.
\end{equation} 
where $\kappa:=1+2\sqrt{n-1}$. Denoting $Q^{*}:=(2\kappa+1)Q=(3+4\sqrt{n-1})Q$, then
using Chebytcheff's inequality, and \eqref{wnonincreasing}, for each cube $Q$ in ${\mathbb{R}}^{n-1}$ 
and each $N>0$ we may write 
\begin{align}\label{6gDDD.a433}
|E_{N,Q}| &\leq\frac{1}{N^2}\frac{1}{\omega(\ell(Q))^2}
\int_Q\int_0^{\ell(Q)}\int_{|x'-y'|<\kappa s}|F(y',s)|^2\,dy'\frac{ds}{s^n}\,dx'
\nonumber \\[4pt]
&\leq\frac{1}{N^2}\frac{1}{\omega(\ell(Q))^2}
\int_{Q^{*}}\int_0^{\ell(Q)}\Big(\int_{|y'-x'|<\kappa s}\,dx'\Big)\,|F(y',s)|^2\frac{ds}{s^n}\,dy'
\nonumber\\[4pt]
&\leq C\frac{1}{N^2}\frac{1}{\omega(\ell(Q))^2}\int_{Q^{*}}\int_0^{\ell(Q^{*})} 
|(\nabla u)(y',s)\,s|^2\frac{ds}{s}\,dy'
\nonumber\\[4pt] 
&\leq C\frac{|Q^{*}|}{N^2}\frac{\omega(\ell(Q^{*}))^2}{\omega(\ell(Q))^2}
\big(\norm{u}^{(\omega,2)}_{**}\big)^2
=C\frac{|Q^{*}|}{N^2}\Big[\frac{\omega(\ell(Q^{*}))}{\omega(\ell(Q))}\Big]^2
\nonumber\\[4pt] 
&\leq C_0(C_{\omega})^2\frac{1}{N^2}|Q|,
\end{align} 
for some finite constant $C_0>0$. Therefore, taking $N:=\sqrt{2C_0}C_{\omega}>0$, we conclude that
\begin{equation}\label{eq:lemmaapplies}
|E_{N,Q}|\leq\frac{1}{2}|Q|.
\end{equation}  
This allows us to invoke Lemma~\ref{lem:appendix} with $\varphi:=\omega$, which together with 
\eqref{eq:conical-carl-q:1}, \eqref{wdoubling}, and \eqref{eq:exp-Q-2Q},  gives
\begin{align}\label{iatfa} 
\norm{u}^{(\omega,\exp)}_{**} 
&\leq C\sup_{Q\subset\mathbb{R}^{n-1}}\frac{1}{\omega(\ell(Q))} 
\bigg\|\bigg(\int_{0}^{\ell(2Q)}\int_{|\,\cdot\,-y'|<s}|F(y',s)|^2\,dy'\frac{ds}{s^n}\bigg)^{1/2}\bigg\|_{\exp L,Q}
\nonumber\\[4pt] 
&\leq C(C_{\omega})^{2}.
\end{align}
This completes the proof of \eqref{comp-car-p-q-2}.

Turning our attention to \eqref{CW-car-q}, fix $x=(x',t)$ and $y=(y',s)$ in $\mathbb{R}^{n}_{+}$, 
and abbreviate $r:=|x-y|$. Then,
\begin{align}\label{step4eq1}
\frac{|u(x)-u(y)|}{W(|x-y|)} &\leq\frac{1}{W(r)}|u(x',t)-u(x',t+r)|+\frac1{W(r)}|u(x',t+r)-u(y',s+r)|
\nonumber\\[4pt] 
&\quad+\frac1{W(r)}|u(y',s+r)-u(y',s)|
\nonumber\\[4pt] 
&=:I+II+III.
\end{align}
To bound $I$, we use the Fundamental Theorem of Calculus, \eqref{wnonincreasing}, and 
\eqref{omega-cond:WDef} and obtain 
\begin{multline}\label{step4eq2}
I=\frac1{W(r)}\left|\int_0^r(\partial_n u)(x',t+\xi)\,d\xi\right| 
\leq\norm{u}_{**}^{(\omega,\infty)}\frac{1}{W(r)}\int_0^r\frac{\omega(t+\xi)}{t+\xi}\,d\xi 
\\[4pt]
\leq C_\omega\norm{u}_{**}^{(\omega,\infty)}\frac{1}{W(r)}\int_0^r\frac{\omega(\xi)}{\xi}\,d\xi 
=C_\omega\norm{u}_{**}^{(\omega,\infty)}.
\end{multline}
Note that $III$ is bounded analogously replacing $x'$ by $y'$ and $t$ by $s$. For $II$, 
we use again the Fundamental Theorem of Calculus, together with \eqref{wnonincreasing} and \eqref{wW},
to write 
\begin{align}\label{step4eq3}
II &=\frac{1}{W(r)}\left|\int_0^1\frac{d}{d\theta}[u(\theta(x',t+r)+(1-\theta)(y',s+r))]\,d\theta\right|
\nonumber\\[4pt] 
&=\frac{1}{W(r)}\left|\int_0^1(x'-y',t-s)\cdot(\nabla u)\big(\theta(x',t+r)+(1-\theta)(y',s+r)\big)\,d\theta\right|
\nonumber\\[4pt] 
&\leq\norm{u}_{**}^{(\omega,\infty)}\frac{r}{W(r)}\int_0^1
\frac{\omega\big((1-\theta)s+\theta t+r\big)}{(1-\theta)s+\theta t+r}\,d\theta
\nonumber\\[4pt] 
&\leq C_\omega\norm{u}_{**}^{(\omega,\infty)}\frac{r}{W(r)}\int_0^1\frac{\omega(r)}{r}\,d\theta
\nonumber\\[4pt] 
&\leq(C_\omega)^2\norm{u}_{**}^{(\omega,\infty)}.
\end{align}
As $x$ and $y$ were chosen arbitrarily, \eqref{step4eq1}, \eqref{step4eq2}, 
and \eqref{step4eq3} collectively justify \eqref{mainthmBMOeq4}.

To justify \eqref{car-q-sup}, observe that since $\omega$ is non-decreasing and satisfies 
\eqref{omega-cond:second} we may write 
\begin{align}\label{eq:76ergtr}
\bigg(\fint_Q\bigg(\int_0^{\ell(Q)}|(\nabla u)(x',t)|^2\,t\,dt\bigg)^{q/2}\,dx'\bigg)^{1/q}
&\leq\bigg(\int_0^{\ell(Q)}\omega(t)^2\frac{dt}{t}\bigg)^{1/2}\norm{u}_{**}^{(\omega,\infty)}
\nonumber\\[4pt]
&\leq(C_\omega')^{1/2}\omega(\ell(Q))\norm{u}_{**}^{(\omega,\infty)},
\end{align}
which readily leads to the desired inequality. 

As regards \eqref{all-comp}, the idea is to combine \eqref{uqeq:1}, \eqref{car-q-sup}, 
and \eqref{sup-q-car} for the first three equivalences. In concert, \eqref{sup-Comega}, 
the fact that \eqref{omega-cond:main} gives $W\leq C_0\,\omega$, and \eqref{CW-car-q}
also give the last equivalence in \eqref{all-comp}. The proof of Proposition~\ref{prop:proper-sols}
is therefore complete.
\end{proof}

\section{Existence Results}\label{section:Existence}

In this section we develop the main tools used to establish the existence of solutions for the boundary 
value problems formulated in the statement of Theorem~\ref{mainthmBMO}. We start with the 
generalized H\"older Dirichlet problem.

\begin{proposition}\label{propfholder}
Let $L$ be a constant complex coefficient system as in \eqref{ars} satisfying the strong ellipticity 
condition formulated in \eqref{LegHad}, and let $\omega$ be a growth function satisfying \eqref{omega-cond:b}. 
Given $f\in\dot{\mathscr{C}}^{\omega}(\mathbb{R}^{n-1},\mathbb{C}^M)$, define 
$u(x',t):=(P_t^L\ast f)(x')$ for every $(x',t)\in\mathbb{R}^{n}_{+}$. 
Then $u$ is meaningfully defined via an absolutely convergent integral and satisfies
\begin{equation}\label{exist:u2****-Holder-omega}
u\in\mathscr{C}^\infty(\mathbb{R}^n_{+},{\mathbb{C}}^M),\quad\,\,
Lu=0\,\,\mbox{ in }\,\,\mathbb{R}^{n}_{+},\quad\,\,
\restr{u}{\partial\mathbb{R}^n_{+}}^{{}^{\rm nt.lim}}=f\text{ a.e. on }\,\,\mathbb{R}^{n-1}.
\end{equation} 
Moreover, there exists a finite constant $C=C(L,n)>0$ such that 
\begin{equation}\label{fholdereq}
[u]_{\dot{\mathscr{C}}^{\omega}(\mathbb{R}^{n}_{+},\mathbb{C}^M)}
\leq C\,C_\omega(1+C_\omega)[f]_{\dot{\mathscr{C}}^{\omega}(\mathbb{R}^{n-1},\mathbb{C}^M)},
\end{equation} 
and $u\in\dot{\mathscr{C}}^{\omega}(\overline{\mathbb{R}^n_{+}},\mathbb{C}^M)$ 
with $\restr{u}{\partial\mathbb{R}^n_{+}}=f$. 
\end{proposition}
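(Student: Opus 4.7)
The plan is to apply Theorem~\ref{PoissonConvolution}\eqref{convo-poiss-sols} for the basic properties of the Poisson extension, and then to derive the sharp H\"older seminorm bound \eqref{fholdereq} via direct estimates on the Poisson convolution that exploit the cancellation from \eqref{PoissonConvolutionId}.

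Lemma~\ref{wlemma2}\eqref{inclusionitem} gives the embedding $\dot{\mathscr{C}}^\omega(\mathbb{R}^{n-1},\mathbb{C}^M)\subset L^1(\mathbb{R}^{n-1},dx'/(1+|x'|^n))^M$, so Theorem~\ref{PoissonConvolution}\eqref{convo-poiss-sols} applies and yields \eqref{exist:u2****-Holder-omega} at once. Next, using \eqref{PoissonConvolutionId} to insert $f(x')$ as a constant, I would write
\begin{equation*}
u(x',t)-f(x') = \int_{\mathbb{R}^{n-1}} P_t^L(x'-z')\bigl[f(z')-f(x')\bigr]\,dz',
\end{equation*}
bound $|f(z')-f(x')|\leq \omega(|z'-x'|)[f]_{\dot{\mathscr{C}}^\omega}$, and use the decay $|P_t^L(w')|\leq Ct/(t^2+|w'|^2)^{n/2}$ that follows from \eqref{PoissonDecay}. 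Passing to polar coordinates and splitting the radial integral at $r=t$, the inner piece is controlled by $\omega(t)$ using monotonicity, whereas the outer piece reduces to $Ct\int_t^\infty \omega(r)\,r^{-2}\,dr$, bounded by $C\,C_\omega\,\omega(t)$ via \eqref{omega-cond:b}. This produces the quantitative boundary-decay bound
\begin{equation*}
|u(x',t)-f(x')| \leq C\,C_\omega\,\omega(t)\,[f]_{\dot{\mathscr{C}}^\omega(\mathbb{R}^{n-1},\mathbb{C}^M)},
\end{equation*}
which already implies that $u$ extends continuously to $\overline{\mathbb{R}^n_+}$ with $\restr{u}{\partial\mathbb{R}^n_+}=f$.

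Combining this bound with the interior regularity estimate from Theorem~\ref{thmestimateder} applied to $\nabla u=\nabla(u-f(x'))$ on the Whitney ball $B((x',t),t/2)$, together with the trivial control $|f(z')-f(x')|\leq \omega(|z'-x'|)[f]_{\dot{\mathscr{C}}^\omega}$ valid within that ball, one obtains the pointwise gradient estimate
\begin{equation*}
|(\nabla u)(x',t)| \leq C\,C_\omega\,\frac{\omega(t)}{t}\,[f]_{\dot{\mathscr{C}}^\omega(\mathbb{R}^{n-1},\mathbb{C}^M)},\qquad (x',t)\in\mathbb{R}^n_+.
\end{equation*}
For the interior H\"older seminorm \eqref{fholdereq} I would then fix $X=(x',t)$ and $Y=(y',s)$ in $\mathbb{R}^n_+$ with $t\geq s$, set $r:=|X-Y|$, and decompose $u(X)-u(Y)=[u(x',t)-u(x',s)]+[u(x',s)-u(y',s)]$. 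The horizontal difference is handled by translation invariance: writing $u(x',s)-u(y',s)=\int P_s^L(w')[f(x'-w')-f(y'-w')]\,dw'$ and using the uniform-in-$s$ bound $\int|P_s^L|\leq C$ produces a bound by $C\,\omega(r)[f]_{\dot{\mathscr{C}}^\omega}$. The vertical difference splits into two regimes: when $s\geq r$, I integrate the gradient bound from $s$ to $t$ and invoke \eqref{wnonincreasing} with $t_1=r\leq \xi$ to write $\omega(\xi)/\xi\leq C_\omega\,\omega(r)/r$, arriving at a bound by $C\,C_\omega(1+C_\omega)\,\omega(r)[f]_{\dot{\mathscr{C}}^\omega}$; when $s<r$, I instead write $u(x',t)-u(x',s)=[u(x',t)-f(x')]-[u(x',s)-f(x')]$, apply the boundary-decay estimate, and close using $s,t\leq 2r$ together with the doubling property \eqref{wdoubling}. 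Combining these bounds yields \eqref{fholdereq}, after which the membership $u\in\dot{\mathscr{C}}^\omega(\overline{\mathbb{R}^n_+},\mathbb{C}^M)$ with $\restr{u}{\partial\mathbb{R}^n_+}=f$ follows from the continuous extension just established together with Lemma~\ref{wlemma}\eqref{holderclosure}.

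The main obstacle is that a bound on $\norm{u}_{**}^{(\omega,\infty)}$ alone, combined with Proposition~\ref{prop:proper-sols}\eqref{CW-car-q}, would only produce a $W$-H\"older seminorm bound, strictly weaker than $\omega$-H\"older in the absence of \eqref{omega-cond:main}. The sharper behaviour is recovered precisely through the quantitative boundary-decay estimate, whose derivation essentially uses the cancellation $\int P_t^L=I_{M\times M}$ to supply the correct $\omega$-scale in the vertical direction for small heights; this is what makes the argument work under the weaker hypothesis \eqref{omega-cond:b} alone.
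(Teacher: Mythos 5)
Your proposal is correct in substance but takes a genuinely different route from the paper. The paper's proof of \eqref{fholdereq} is a single direct computation: it rewrites $(P_t^L*f)(x')=\int_{\mathbb{R}^{n-1}}P^L(z')f(x'-tz')\,dz'$, bounds $|f(x'-tz')-f(y'-sz')|\leq[f]_{\dot{\mathscr{C}}^{\omega}(\mathbb{R}^{n-1},\mathbb{C}^M)}\,\omega\big((1+|z'|)r\big)$ with $r=|(x',t)-(y',s)|$, and splits the resulting radial integral at $\lambda=1$, the tail being absorbed via \eqref{omega-cond:b}; no boundary-decay estimate, no gradient bound, and no case analysis are needed. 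You instead prove a quantitative boundary-decay estimate using the cancellation \eqref{PoissonConvolutionId}, upgrade it to a Whitney-scale gradient bound through Theorem~\ref{thmestimateder}, and assemble the seminorm by a horizontal/vertical decomposition with two regimes. All of these steps are sound: the decay $|P^L_t(w')|\le C\,t\,(t^2+|w'|^2)^{-n/2}$ does follow from \eqref{PoissonDecay} and \eqref{7hfDD}, your tail integrals are exactly where \eqref{omega-cond:b} enters, and your route has the side benefit of yielding the unrestricted boundary limit and the bound $\norm{u}_{**}^{(\omega,\infty)}\lesssim C_\omega^2[f]_{\dot{\mathscr{C}}^{\omega}(\mathbb{R}^{n-1},\mathbb{C}^M)}$ along the way. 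The one quantitative discrepancy worth noting: on the ball $B((x',t),t/2)$ the heights reach $3t/2$, so converting the boundary-decay bound into the gradient bound costs an extra doubling factor $\omega(3t/2)\le 2C_\omega\omega(t)$, making the gradient estimate $C(C_\omega)^2\omega(t)/t$ rather than the $C\,C_\omega\,\omega(t)/t$ you state; after integrating vertically and invoking \eqref{wnonincreasing} in the regime $s\ge r$ you end up with $C(C_\omega)^3$ in place of the stated $C\,C_\omega(1+C_\omega)$. Since \eqref{fholdereq} specifies the precise dependence on $C_\omega$, you should either track and state this weaker constant or switch to the paper's one-shot computation, which produces the sharper constant directly.
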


\begin{proof}
Let $f\in\dot{\mathscr{C}}^{\omega}(\mathbb{R}^{n-1},\mathbb{C}^M)$ and define 
$u(x',t):=(P_t^L\ast f)(x')$ for every $(x',t)\in\mathbb{R}^{n}_{+}$. By \eqref{inclusionE} and 
Theorem~\ref{PoissonConvolution}\eqref{convo-poiss-sols}, $u$ satisfies all 
properties listed in \eqref{exist:u2****-Holder-omega}. To prove the estimate in 
\eqref{fholdereq}, we first notice that for any $(x',t)\in\mathbb{R}_+^{n}$, 
we can write
\begin{align}\label{iatuyhG}
(P_t^L\ast f)(x') &=\int_{\mathbb{R}^{n-1}} P_t^L(x'-y')f(y')\,dy' 
=\int_{\mathbb{R}^{n-1}}t^{1-n}P^L\left(\frac{x'-y'}{t}\right)f(y')\,dy'
\nonumber\\[4pt] 
&=\int_{\mathbb{R}^{n-1}}P^L(z')f(x'-tz')\,dz'.
\end{align} 
Fix now $x=(x',t)$ and $y=(y',s)$ arbitrary in $\mathbb{R}^{n}_{+}$, and set $r:=|x-y|$. 
By \eqref{PoissonDecay} and the fact that $\omega$ is non-decreasing we obtain
\begin{align}\label{fholdereq-1}
|u(x',t)-u(y',s)|&=|(P^L_t\ast f)(x')-(P^L_s\ast f)(y')|
\nonumber\\[4pt] 
&\leq C\int_{\mathbb{R}^{n-1}}\frac1{(1+|z'|^2)^{n/2}}\, |f(x'-tz')-f(y'-sz')|\, dz'
\nonumber\\[4pt] 
&\leq C[f]_{\dot{\mathscr{C}}^{\omega}(\mathbb{R}^{n-1},\mathbb{C}^M)}
\int_{\mathbb{R}^{n-1}}\frac1{(1+|z'|^2)^{n/2}}\,\omega((1+|z'|)r)\,dz'
\nonumber\\[4pt] 
&\leq C [f]_{\dot{\mathscr{C}}^{\omega}(\mathbb{R}^{n-1},\mathbb{C}^M)}
\int_0^{\infty}\frac{1}{(1+\lambda^2)^{n/2}}\,\omega\big((1+\lambda)r\big)\,
\lambda^{n-1}\frac{d\lambda}{\lambda} 
\nonumber\\[4pt] 
&\leq C[f]_{\dot{\mathscr{C}}^{\omega}(\mathbb{R}^{n-1},\mathbb{C}^M)} 
\bigg(\int_0^1\omega(2 r)\,\lambda^{n-1}\frac{d\lambda}{\lambda}
+\int_1^{\infty}\frac{\omega(2\lambda r)}{\lambda}\frac{d\lambda}{\lambda}\bigg)
\nonumber\\[4pt] 
&=C[f]_{\dot{\mathscr{C}}^{\omega}(\mathbb{R}^{n-1},\mathbb{C}^M)} 
\bigg(\omega(2r)+2r\int_{2r}^{\infty}\frac{\omega(\lambda)}{\lambda}\frac{d\lambda}{\lambda}\bigg)
\nonumber\\[4pt] 
&\leq C\,C_\omega(1+C_\omega)\omega(r)[f]_{\dot{\mathscr{C}}^{\omega}(\mathbb{R}^{n-1},\mathbb{C}^M)}, 
\end{align} 
where in the last inequality we have used \eqref{omega-cond:b} and \eqref{wdoubling}. Hence, 
\eqref{fholdereq} holds. In particular, $u\in\dot{\mathscr{C}}^{\omega}(\overline{\mathbb{R}^n_{+}},\mathbb{C}^M)$ 
by Lemma~\ref{wlemma}\eqref{holderclosure}. This and the fact that 
$\restr{u}{\partial\mathbb{R}^n_{+}}^{{}^{\rm nt.lim}}=f$ a.e. in $\mathbb{R}^{n-1}$ with 
$f\in\dot{\mathscr{C}}^{\omega}(\mathbb{R}^{n-1},\mathbb{C}^M)$ then prove that, indeed,  
$\restr{u}{\partial\mathbb{R}^n_{+}}=f$.
\end{proof}

The result below is the main tool in the proof of existence of solutions for the generalized Morrey-Campanato 
Dirichlet problem.

\begin{proposition}\label{propstep1}
Let $L$ be a constant complex coefficient system as in \eqref{ars} satisfying the strong ellipticity 
condition stated in \eqref{LegHad}, and let $\omega$ be a growth function satisfying \eqref{omega-cond:b}.
Given $1\leq p<\infty$, let $f\in\mathscr{E}^{\omega,p}(\mathbb{R}^{n-1},\mathbb{C}^M)$ and define 
$u(x',t):=(P_t^L\ast f)(x')$ for every $(x',t)\in\mathbb{R}^{n}_{+}$. Then $u$ is meaningfully defined 
via an absolutely convergent integral and satisfies
\begin{equation}\label{exist:u2****-Morrey-campanato}
u\in\mathscr{C}^\infty(\mathbb{R}^n_{+},{\mathbb{C}}^M),\quad\,\, 
Lu=0\,\,\mbox{ in }\,\,\mathbb{R}^{n}_{+},\quad\,\,
\restr{u}{\partial\mathbb{R}^n_{+}}^{{}^{\rm nt.lim}}=f\text{ a.e. on }\,\,\mathbb{R}^{n-1}.
\end{equation} 
Moreover, for every $q\in(0,\infty]$ there exists a finite constant $C=C(L,n,p,q)>0$ such that  
\begin{equation}\label{step1}
\norm{u}_{**}^{(\omega,q)}\leq C(C_\omega)^{4}\norm{f}_{\mathscr{E}^{\omega,p}(\mathbb{R}^{n-1},\mathbb{C}^M)}.
\end{equation}
Furthermore, the same is true if $\norm{\cdot}_{**}^{(\omega,q)}$ is replaced
by $\norm{\cdot}_{**}^{(\omega,\exp)}$.
\end{proposition}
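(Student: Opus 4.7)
The plan is to proceed in three steps: first verify the existence and basic properties of $u$ via the Poisson extension machinery already in place, then establish the $q=2$ square-function bound as the sole genuine PDE input, and finally bootstrap to all remaining values of $q$ and to the exponential endpoint through the abstract comparison inequalities collected in Proposition~\ref{prop:proper-sols}. For the existence portion, Lemma~\ref{wlemma2}\eqref{inclusionitem} guarantees $f\in L^1\big(\mathbb{R}^{n-1},dx'/(1+|x'|^n)\big)^M$, so Theorem~\ref{PoissonConvolution}\eqref{convo-poiss-sols} yields both \eqref{exist:u2****-Morrey-campanato} and, crucially, the area-integral estimate \eqref{PoissonConvolutioneq}.

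For the $q=2$ case, fix a cube $Q\subset\mathbb{R}^{n-1}$. The idea is to bound $\osc_1(f;s\ell(Q))$ in \eqref{PoissonConvolutioneq} first by $\osc_p(f;s\ell(Q))$ via \eqref{estimatea}, then by $\omega(s\ell(Q))\,\norm{f}_{\mathscr{E}^{\omega,p}(\mathbb{R}^{n-1},\mathbb{C}^M)}$ via \eqref{oscmorrey}. After the substitution $r=s\ell(Q)$, the tail integral becomes
\begin{equation*}
\ell(Q)\int_{\ell(Q)}^\infty \frac{\omega(r)}{r}\,\frac{dr}{r},
\end{equation*}
which is at most $C_\omega\,\omega(\ell(Q))$ by hypothesis \eqref{omega-cond:b}. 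Dividing by $\omega(\ell(Q))$ and taking the supremum over $Q$ delivers the clean bound $\norm{u}_{**}^{(\omega,2)}\leq C\,C_\omega\,\norm{f}_{\mathscr{E}^{\omega,p}(\mathbb{R}^{n-1},\mathbb{C}^M)}$.

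The last step transfers this estimate to the full range of parameters. For $q\in(0,2]$, the Jensen-type inequality encoded in the first half of \eqref{uqeq:1} gives $\norm{u}_{**}^{(\omega,q)}\leq\norm{u}_{**}^{(\omega,2)}$; for the exponential endpoint, \eqref{uqeq} from Proposition~\ref{prop:proper-sols}\eqref{comp-car-p-q-2} yields $\norm{u}_{**}^{(\omega,\exp)}\leq C(C_\omega)^2\norm{u}_{**}^{(\omega,2)}$, and for $q\in(2,\infty)$ the second half of \eqref{uqeq:1} propagates this further down. For $q=\infty$, estimate \eqref{uqeq-infty} from Proposition~\ref{prop:proper-sols}\eqref{sup-q-car} applied with $q=2$ supplies $\norm{u}_{**}^{(\omega,\infty)}\leq C\,C_\omega\,\norm{u}_{**}^{(\omega,2)}$. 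All the resulting bounds are dominated by $C(C_\omega)^3\,\norm{f}_{\mathscr{E}^{\omega,p}(\mathbb{R}^{n-1},\mathbb{C}^M)}$, which is well within the claimed $(C_\omega)^4$.

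The principal obstacle is the $q=2$ step itself: it is the only place where the PDE structure enters directly, and it is where the Morrey--Campanato scale tail has to be married to \eqref{omega-cond:b} with the correct normalization in order to avoid logarithmic losses. Everything afterwards is a soft consequence of the abstract comparison machinery already established in Proposition~\ref{prop:proper-sols}, so the bookkeeping is merely a matter of tracking the correct power of $C_\omega$ through each transfer.
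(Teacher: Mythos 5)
Your proposal is correct and follows essentially the same route as the paper: the paper's proof is exactly the chain \eqref{PoissonConvolutioneq} $\to$ \eqref{estimatea} $\to$ \eqref{oscmorrey} $\to$ \eqref{omega-cond:b} to control the $q=2$ quantity, followed by the transfer estimates \eqref{uqeq:1}, \eqref{uqeq}, and \eqref{uqeq-infty} from Proposition~\ref{prop:proper-sols} to reach all $q\in(0,\infty]$ and the $\exp L$ endpoint, with the same existence argument via \eqref{inclusionE} and Theorem~\ref{PoissonConvolution}\eqref{convo-poiss-sols}. The only difference is organizational (you isolate the $q=2$ bound first and then disperse, whereas the paper writes one descending chain of inequalities), and your accounting of the powers of $C_\omega$ is consistent with the claimed $(C_\omega)^4$.
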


\begin{proof}
Given $f\in\mathscr{E}^{\omega,p}(\mathbb{R}^{n-1},\mathbb{C}^M)$, if $u(x',t):=(P_t^L\ast f)(x')$ 
for every $(x',t)\in\mathbb{R}^{n}_{+}$, from \eqref{inclusionE} and 
Theorem~\ref{PoissonConvolution}\eqref{convo-poiss-sols} we see that 
$u$ satisfies all properties listed in \eqref{exist:u2****-Morrey-campanato}.

Next, having fixed an arbitrary exponent $q\in(0,\infty)$, based on Proposition~\ref{prop:proper-sols}\eqref{sup-q-car}, 
\eqref{uqeq:1}, Proposition~\ref{prop:proper-sols}\eqref{comp-car-p-q-2}, \eqref{PoissonConvolutioneq}, 
\eqref{estimatea}, \eqref{oscmorrey} and \eqref{omega-cond:b} we may write 
\begin{align}\label{pa6ff}
\norm{u}_{**}^{(\omega,\infty)} &\leq C\,C_\omega\norm{u}_{**}^{(\omega,q)}  
\leq C\,C_\omega\norm{u}_{**}^{(\omega,\exp)}  
\nonumber\\[4pt]
&\leq C(C_\omega)^3\norm{u}_{**}^{(\omega,2)}  
\leq C(C_\omega)^3\sup_{t>0} \frac1{\omega(t)}\int_{1}^{\infty}\osc_1(f,st)\frac{ds}{s^2} 
\nonumber\\[4pt]
&=C(C_\omega)^3\norm{f}_{\mathscr{E}^{\omega,p}(\mathbb{R}^{n-1},\mathbb{C}^M)}
\sup_{t>0}\frac{t}{\omega(t)}\int_t^{\infty}\omega(s)\frac{ds}{s^2} 
\nonumber\\[4pt]
&\leq C(C_\omega)^{4}\norm{f}_{\mathscr{E}^{\omega,p}(\mathbb{R}^{n-1},\mathbb{C}^M)},
\end{align} 
which proves \eqref{step1} and the corresponding estimate for $\norm{u}_{**}^{(\omega,\exp)}$.
\end{proof}

\section{A Fatou-Type Result and Uniqueness of Solutions}\label{section:Fatou}

We shall now prove a Fatou-type result which is going to be the main ingredient in establishing 
the uniqueness of solutions for the boundary value problems we are presently considering.
More precisely, the following result establishes that any solution in 
$\dot{\mathscr{C}}^{\omega}(\mathbb{R}^n_{+},\mathbb{C}^M)$
can be obtained as a convolution of its trace with the associated Poisson kernel.

\begin{proposition}\label{fatou}
Let $L$ be a constant complex coefficient system as in \eqref{ars} satisfying the strong ellipticity 
condition stated in \eqref{LegHad}, and let $\omega$ be a growth function satisfying \eqref{omega-cond:b}. 
If $u\in\mathscr{C}^{\infty}(\mathbb{R}^n_{+},\mathbb{C}^M)\cap
\dot{\mathscr{C}}^{\omega}(\mathbb{R}^n_{+},\mathbb{C}^M)$ is a function satisfying $Lu=0$ in $\mathbb{R}^n_{+}$, 
then $\restr{u}{\partial\mathbb{R}^n_{+}}\in\dot{\mathscr{C}}^{\omega}(\mathbb{R}^{n-1},\mathbb{C}^M)$ and
\begin{equation}\label{claimeq}
u(x',t)=\left(P_t^L\ast(\restr{u}{\partial\mathbb{R}^n_{+}})\right)(x'),\qquad\forall\,(x',t)\in\mathbb{R}^n_{+}.
\end{equation}
\end{proposition}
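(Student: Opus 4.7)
The strategy is to identify $f := \restr{u}{\partial\mathbb{R}^n_+}$ and to prove that $u$ coincides with $v(x',t) := (P_t^L \ast f)(x')$. By Lemma~\ref{wlemma}\eqref{holderclosure}, $u$ extends uniquely to a function in $\dot{\mathscr{C}}^\omega(\overline{\mathbb{R}^n_+},\mathbb{C}^M)$, so $f$ is well-defined and lies in $\dot{\mathscr{C}}^\omega(\mathbb{R}^{n-1},\mathbb{C}^M)$; this already settles the first assertion of the proposition. Lemma~\ref{wlemma2}\eqref{inclusionitem} then places $f$ in the weighted $L^1$-class appearing in Theorem~\ref{PoissonConvolution}, so $v$ is a meaningful absolutely convergent integral; by Proposition~\ref{propfholder}, $v \in \mathscr{C}^\infty(\mathbb{R}^n_+,\mathbb{C}^M) \cap \dot{\mathscr{C}}^\omega(\overline{\mathbb{R}^n_+},\mathbb{C}^M)$, $Lv = 0$ in $\mathbb{R}^n_+$, and $\restr{v}{\partial\mathbb{R}^n_+} = f$. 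Setting $w := u - v$, it remains to show $w \equiv 0$; note that $w$ is a smooth $L$-null solution in $\dot{\mathscr{C}}^\omega(\overline{\mathbb{R}^n_+},\mathbb{C}^M)$ with $\restr{w}{\partial\mathbb{R}^n_+} = 0$, hence $|w(x',t)| \le [w]_{\dot{\mathscr{C}}^\omega(\overline{\mathbb{R}^n_+},\mathbb{C}^M)}\,\omega(t)$ throughout $\mathbb{R}^n_+$.

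To establish $w \equiv 0$ I plan to invoke Proposition~\ref{uniqprop}. Granting its hypotheses for a moment, it supplies, for each $x = (x', x_n) \in \overline{\mathbb{R}^n_+}$ and $r > 0$, the gradient bound $\sup_{B(x,r) \cap \mathbb{R}^n_+} |\nabla w| \le (C/r)\sup_{B(x,2r) \cap \mathbb{R}^n_+} |w|$. Since every $(y', s) \in B(x, 2r) \cap \mathbb{R}^n_+$ satisfies $s \le x_n + 2r$, this reads
\[
\sup_{B(x,r) \cap \mathbb{R}^n_+} |\nabla w| \;\le\; \frac{C}{r}\,\omega(x_n + 2r).
\]
Letting $r \to \infty$ and invoking Lemma~\ref{wlemma}\eqref{wlimit} (which provides $\omega(s)/s \to 0$ as $s \to \infty$), I factor $\omega(x_n+2r)/r = [\omega(x_n+2r)/(x_n+2r)]\cdot[(x_n+2r)/r]$ and observe that the first factor tends to $0$ while the second tends to $2$. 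Consequently $|\nabla w(x)| = 0$ for every $x \in \mathbb{R}^n_+$, so $w$ is constant on $\mathbb{R}^n_+$, and the boundary condition $\restr{w}{\partial\mathbb{R}^n_+} = 0$ forces $w \equiv 0$, establishing \eqref{claimeq}.

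The main obstacle I anticipate is verifying the hypotheses of Proposition~\ref{uniqprop} for $w$. The trace condition $\Tr w = 0$ is immediate from the continuity of $w$ up to $\partial\mathbb{R}^n_+$ together with $\restr{w}{\partial\mathbb{R}^n_+} = 0$, in view of \eqref{defi-trace}. The more delicate requirement is $w \in W^{1,2}_{\rm bdd}(\mathbb{R}^n_+)$, because the pointwise gradient bound $|\nabla w(x',t)| \le C\omega(t)/t$ furnished by Proposition~\ref{prop:proper-sols}\eqref{sup-Comega} is not in general locally square-integrable near $t = 0$ (e.g.\ for $\omega(t) = t^\alpha$ with $\alpha \le 1/2$). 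To circumvent this I would appeal to a boundary Caccioppoli-type inequality: integrating the identity $\eta^2\overline{w}_\alpha\,a_{jk}^{\alpha\beta}\,\partial_j\partial_k w_\beta = 0$ by parts on $\mathbb{R}^n_+ \cap \{t > \delta\}$ against a smooth cutoff $\eta$ compactly supported in $B(x_0, r)$, extracting a positive $L^2$-gradient term via the Legendre--Hadamard condition \eqref{LegHad}, and letting $\delta \to 0^+$ (the boundary contribution on $\{t = \delta\}$ vanishes uniformly since $w$ is continuous up to the boundary with $w = 0$ there), one obtains $\int_{B(x_0, r/2) \cap \mathbb{R}^n_+} |\nabla w|^2 \le (C/r^2)\int_{B(x_0, r) \cap \mathbb{R}^n_+} |w|^2 \le C\,\omega(r)^2\,r^{n-2}$, which is finite for every $r > 0$ and delivers the desired membership.
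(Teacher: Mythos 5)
Your reduction to Proposition~\ref{uniqprop} follows the same overall strategy as the paper, and you have correctly isolated the crux: verifying that the difference of the two candidate solutions lies in $W^{1,2}_{\rm bdd}(\mathbb{R}^n_{+})$. However, the boundary Caccioppoli argument you propose for this step does not work, for two independent reasons. First, the Legendre--Hadamard condition \eqref{LegHad} is only a rank-one positivity condition: it does not make the integrand $a_{jk}^{\alpha\beta}\partial_j\overline{w}_\alpha\,\partial_k w_\beta$ pointwise coercive, and the integrated (G\aa rding--Plancherel) coercivity it does provide applies to test functions in $H^1(\mathbb{R}^n)$ with compact support --- i.e.\ to $\eta w$ extended by zero across $\partial\mathbb{R}^n_{+}$, whose membership in $H^1$ is precisely what you are trying to establish. (For scalar equations, or for systems satisfying the stronger Legendre condition, pointwise coercivity is available; but the paper's hypotheses include, e.g., the complex Lam\'e system, for which it is not.) Second, the boundary term produced by the integration by parts on $\{t=\delta\}$ has size comparable to $\sup_{t=\delta}|w|\cdot\sup_{t=\delta}|\nabla w|\cdot r^{n-1}\lesssim \omega(\delta)\cdot\tfrac{\omega(\delta)}{\delta}\cdot r^{n-1}$ by the interior estimate \eqref{lemsmalleq}, and this does \emph{not} tend to $0$ as $\delta\to 0^{+}$ when, say, $\omega(t)=t^{\alpha}$ with $\alpha\le 1/2$ --- which is exactly the regime in which the pointwise bound $|\nabla w|\lesssim\omega(t)/t$ fails to be locally square integrable and a Caccioppoli inequality would actually be needed.

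The paper sidesteps this difficulty by never applying Proposition~\ref{uniqprop} to $u-v$ directly. It works instead with the vertical translates $u_\varepsilon:=u(\cdot+\varepsilon e_n)$ and with $w_\varepsilon:=P^L_t\ast f_\varepsilon$, where $f_\varepsilon:=u(\cdot,\varepsilon)$. The translate $u_\varepsilon$ has a globally bounded gradient (of size $\omega(\varepsilon)/\varepsilon$), hence lies in $W^{1,2}_{\rm bdd}(\mathbb{R}^n_{+})$ trivially; and $f_\varepsilon$ is Lipschitz at small scales (it belongs to $\dot{\mathscr{C}}^{\Psi}$ with $\Psi(t)=\min\{t,\omega(t)/\omega(1)\}$), which via \eqref{ConvDer} yields a locally square-integrable gradient bound for $w_\varepsilon$. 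Uniqueness then gives $u(x',t+\varepsilon)=(P^L_t\ast f_\varepsilon)(x')$, and one passes to the limit $\varepsilon\to 0^{+}$ using $\sup_{y'}\big|f_\varepsilon(y')-(\restr{u}{\partial\mathbb{R}^n_{+}})(y')\big|\le[u]_{\dot{\mathscr{C}}^{\omega}(\overline{\mathbb{R}^n_{+}},\mathbb{C}^M)}\,\omega(\varepsilon)\to 0$. To repair your proof you must either genuinely establish the boundary Caccioppoli inequality for Legendre--Hadamard systems in this generality (which your sketch does not do), or adopt the translation device.
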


\begin{proof}
Let $u\in\mathscr{C}^{\infty}(\mathbb{R}^n_{+},\mathbb{C}^M)\cap
\dot{\mathscr{C}}^{\omega}(\mathbb{R}^n_{+},\mathbb{C}^M)$ satisfy $Lu=0$ in $\mathbb{R}^n_{+}$. 
By Lemma~\ref{wlemma}\eqref{holderclosure}, it follows that $u$ can be continuously extended to 
a function (which we call again $u$) $u\in\dot{\mathscr{C}}^{\omega}(\overline{\mathbb{R}^n_{+}},\mathbb{C}^M)$. 
In particular, the trace $\restr{u}{\partial\mathbb{R}^n_{+}}$ is well-defined and belongs to the space  
$\dot{\mathscr{C}}^{\omega}(\mathbb{R}^{n-1},\mathbb{C}^M)$.
To proceed, fix an arbitrary $\varepsilon>0$ and define $u_\varepsilon=u(\cdot+\varepsilon e_n)$ 
in $\mathbb{R}^n_{+}$, where $e_n=(0,\dots,0,1)\in\mathbb{R}^n$. Then, by design, 
$u_\varepsilon\in\mathscr{C}^{\infty}(\overline{\mathbb{R}^n_{+}},\mathbb{C}^M)\cap
\dot{\mathscr{C}}^{\omega}(\overline{\mathbb{R}^n_{+}},\mathbb{C}^M)$, $Lu_{\varepsilon}=0$ in $\mathbb{R}^n_{+}$, 
and $[u_\varepsilon]_{\dot{\mathscr{C}^{\omega}}(\overline{\mathbb{R}^{n}_{+}},\mathbb{C}^M)}
\leq [u]_{\dot{\mathscr{C}^{\omega}}(\mathbb{R}^{n}_{+},\mathbb{C}^M)}$. Moreover, using 
Proposition~\ref{prop:proper-sols}\eqref{sup-Comega} and \eqref{wnonincreasing} we obtain
\begin{align}\label{ayttrf}
\sup_{(x',t)\in\mathbb{R}^n_{+}}|(\nabla u_\varepsilon)(x',t)| 
&=\sup_{(x',t)\in\mathbb{R}^n_{+}}|(\nabla u)(x',t+\varepsilon)| 
\nonumber\\[4pt]
&\leq C[u]_{\dot{\mathscr{C}^{\omega}}(\mathbb{R}^{n}_{+},\mathbb{C}^M)}
\sup_{(x',t)\in\mathbb{R}^n_{+}} \frac{\omega(t+\varepsilon)}{t+\varepsilon} 
\nonumber\\[4pt]
&\leq C\,C_\omega [u]_{\dot{\mathscr{C}^{\omega}}(\mathbb{R}^{n}_{+},\mathbb{C}^M)}
\frac{\omega(\varepsilon)}{\varepsilon}.
\end{align} 
This implies that $\nabla u_\varepsilon$ is bounded in $\mathbb{R}^n_{+}$, hence 
$u_\varepsilon\in W^{1,2}_{\rm bdd}(\mathbb{R}^n_{+},\mathbb{C}^M)$. 

Define next $f_\varepsilon(x'):=u(x',\varepsilon)\in\dot{\mathscr{C}}^{\omega}(\mathbb{R}^{n-1},\mathbb{C}^M)$ 
and $w_\varepsilon(x',t):=(P_t^L\ast f_\varepsilon)(x')$ for each $(x',t)\in\mathbb{R}^n_{+}$. 
Then, Proposition~\ref{propfholder} implies that that 
$w_\varepsilon\in\mathscr{C}^{\infty}(\mathbb{R}^n_{+},\mathbb{C}^M)\cap
\dot{\mathscr{C}}^{\omega}(\overline{\mathbb{R}^n_{+}},\mathbb{C}^M)$, $L w_\varepsilon=0$ 
in $\mathbb{R}^n_{+}$ and $\restr{w_\varepsilon}{\partial\mathbb{R}^n_{+}}= f_\varepsilon$. 
Moreover, for every pair of points $x',y'\in\mathbb{R}^{n-1}$ we have, on the one hand, 
\begin{equation}\label{ayfff}
|f_\varepsilon(x')-f_\varepsilon(y')|=|u(x',\varepsilon)-u(y',\varepsilon)|
\leq [u]_{\dot{\mathscr{C}^{\omega}}(\mathbb{R}^{n}_{+},\mathbb{C}^M)}\,\omega(|x'-y'|),
\end{equation}
and, on the other hand, using the Mean Value Theorem and  Proposition~\ref{prop:proper-sols}\eqref{sup-Comega},
\begin{align}\label{6ggff}
|f_\varepsilon (x')-f_\varepsilon(y')| &=|u(x',\varepsilon)-u(y',\varepsilon)| 
\nonumber\\[4pt]
&\leq|x'-y'|\sup_{z'\in[x',y']}|(\nabla u)(z',\varepsilon)|
\nonumber\\[4pt]
&\leq C\,|x'-y'|\,[u]_{\dot{\mathscr{C}^{\omega}}(\mathbb{R}^{n}_{+},\mathbb{C}^M)}
\frac{\omega(\varepsilon)}{\varepsilon}.
\end{align}  
Therefore, we conclude that $f_\varepsilon\in\dot{\mathscr{C}}^{\Psi}(\mathbb{R}^{n-1},\mathbb{C}^M)$, 
with norm depending (unfavorably) on the parameter $\varepsilon$, where the growth function $\Psi$ is given by 
\begin{equation}\label{psi}
\Psi(t):=\min\left\lbrace t,\frac{\omega(t)}{\omega(1)}\right\rbrace=\left\lbrace
\begin{array}{ll}
t &\text{ if }t\leq 1,
\\[4pt]
\omega(t)/\omega(1) &\text{ if }t>1.
\end{array}
\right.
\end{equation}
For every $R>1$ and $x=(x',t)$, let us now invoke \eqref{ConvDer}, 
\eqref{estimatea} and \eqref{oscmorrey}, with $\Psi$ in place of $\omega$, to write 
\begin{align}\label{q34r34fr}
&\int_{B(0,R)\cap\mathbb{R}^n_{+}}|(\nabla w_\varepsilon)(x)|^2\,dx
\leq\int_{B(0,R)\cap\mathbb{R}^n_{+}}\left(\frac{C}{t}\int_1^{\infty}\osc_1(f_\varepsilon;st)
\frac{ds}{s^2}\right)^2\,dx
\nonumber\\[4pt] 
&\qquad\leq C\norm{f_\varepsilon}_{\mathscr{E}^{\Psi,p}(\mathbb{R}^{n-1},\mathbb{C}^M)}
\int_{B(0,R)\cap\mathbb{R}^n_{+}}\left(\int_1^{\infty}\frac{\Psi(st)}{st}\frac{ds}{s}\right)^2\,dx
\nonumber\\[4pt]
&\qquad\leq C\norm{f_\varepsilon}_{\mathscr{E}^{\Psi,p}(\mathbb{R}^{n-1},\mathbb{C}^M)}
R^{n-1}\int_0^R\left(\int_t^\infty\frac{\Psi(s)}{s}\frac{ds}{s}\right)^2\,dt,
\end{align} 
and then use \eqref{omega-cond:b} to observe that
\begin{align}\label{ayredf5t}
\int_0^R\left(\int_t^\infty\frac{\Psi(s)}{s}\frac{ds}{s}\right)^2\,dt
&\leq\int_0^1\left(\int_t^1\frac{ds}{s}+\frac1{\omega(1)}\int_1^\infty\frac{\omega(s)}{s}\frac{ds}{s}\right)^2\,dt
\nonumber\\[4pt] 
&\qquad\quad+\int_1^R\left(\frac1{\omega(1)}\int_1^\infty\frac{\omega(s)}{s}\frac{ds}{s}\right)^2\,dt
\nonumber\\[4pt] 
&\leq\int_0^1\big(\log(1/t)+C_\omega\big)^2\,dt+(R-1)(C_\omega)^2<\infty,
\end{align}
Collectively, \eqref{q34r34fr} and \eqref{ayredf5t} show that 
$w_\varepsilon\in W^{1,2}_{\rm bdd}(\mathbb{R}^n_{+},\mathbb{C}^M)$.

We now consider $v_\varepsilon:=u_\varepsilon-w_\varepsilon\in\mathscr{C}^{\infty}(\mathbb{R}^n_{+},\mathbb{C}^M)
\cap\dot{\mathscr{C}}^{\omega}(\overline{\mathbb{R}^n_{+}},\mathbb{C}^M)\cap  
W^{1,2}_{\rm bdd}(\mathbb{R}^n_{+},\mathbb{C}^M)$, which satisfies $Lv_\varepsilon=0$ in $\mathbb{R}^n_{+}$ and  
$\restr{v_\varepsilon}{\partial\mathbb{R}^n_{+}}=0$. Hence, $\Tr\,v_\varepsilon=0$ on 
$\mathbb{R}^{n-1}$ (see \eqref{defi-trace}) and for each $x\in{\mathbb{R}}^n$ we have
\begin{align}\label{uatf}
|v_\varepsilon(x)| &\leq|v_\varepsilon(x)-v_\varepsilon(0)|+|v_\varepsilon(0)| 
\nonumber\\[4pt]
&\leq\max\big\{[v_\varepsilon]_{\dot{\mathscr{C}^{\omega}}(\mathbb{R}^{n}_{+},\mathbb{C}^M)}\,,\,
|v_\varepsilon(0)|\big\}(1+\omega(|x|)).
\end{align}
From this and Proposition~\ref{uniqprop} we then conclude that 
\begin{equation}\label{uniqeq2}
\sup_{\mathbb{R}^n_{+}\cap B(0,r)}|\nabla v_\varepsilon| 
\leq\frac{C}{r}\sup_{\mathbb{R}^n_{+}\cap B(0,2r)}|v_\varepsilon|\leq C_\varepsilon\frac{1+\omega(2r)}{r},
\end{equation} 
and from Lemma~\ref{wlemma}\eqref{wlimit} we see that the right side of \eqref{uniqeq2} tends to 
$0$ as $r\to\infty$. This forces $\nabla v_\varepsilon\equiv 0$, and since 
$v_\varepsilon\in\mathscr{C}^{\infty}(\mathbb{R}^n_{+},\mathbb{C}^M)\cap
\dot{\mathscr{C}}^{\omega}(\overline{\mathbb{R}^n_{+}},\mathbb{C}^M)$ 
with $\restr{v_\varepsilon}{\partial\mathbb{R}^n_{+}}=0$ we ultimately conclude that 
$v_\varepsilon\equiv 0$. Consequently,  
\begin{equation}\label{claimepsilon} 
u(x',t+\varepsilon)=(P_t^L\ast f_\varepsilon)(x'),\qquad\forall\,(x',t)\in\mathbb{R}^n_{+}. 
\end{equation}
Since, as noted earlier, 
$\restr{u}{\partial\mathbb{R}^n_{+}}\in\dot{\mathscr{C}}^{\omega}(\mathbb{R}^{n-1},\mathbb{C}^M)$,
for every $x'\in\mathbb{R}^{n-1}$ and $\varepsilon>0$ we may now write 
\begin{align}\label{ay6RR}
\big|u(x',t+\varepsilon)-\big(P_t^L\ast(\restr{u}{\partial\mathbb{R}^n_{+}})\big)(x')\big|
&=\big|\big(P_t^L * (f_\varepsilon-\restr{u}{\partial\mathbb{R}^n_{+}})\big)(x')\big|
\nonumber\\[4pt]
&\leq\|P_t^L\|_{L^1(\mathbb{R}^{n-1})}\sup_{y'\in\mathbb{R}^{n-1}}
\big|f_\varepsilon(y') -\restr{u}{\partial\mathbb{R}^n_{+}}(y')\big|
\nonumber\\[4pt]
&=\|P^L\|_{L^1(\mathbb{R}^{n-1})}\sup_{y'\in\mathbb{R}^{n-1}}
\big|u(y',\varepsilon) -\restr{u}{\partial\mathbb{R}^n_{+}}(y')\big|
\nonumber\\[4pt]
&\leq\|P^L\|_{L^1(\mathbb{R}^{n-1})}
[u]_{\dot{\mathscr{C}^{\omega}}(\overline{\mathbb{R}^{n}_{+}},\mathbb{C}^M)}\,\omega(\varepsilon).
\end{align}
From \eqref{PoissonDecay} we know that $\|P^L\|_{L^1(\mathbb{R}^{n-1})}<\infty$. 
Upon letting $\varepsilon\to 0^{+}$ and using that $\omega$ vanishes in the limit at the origin,
we see that \eqref{ay6RR} implies \eqref{claimeq}. This finishes the proof of Proposition~\ref{fatou}.
\end{proof}

\section{Well-Posedness Results}\label{section:well-general}

We are now ready to prove well-posedness results. We first consider the case in which 
the boundary data belong to generalized H\"older spaces and we note that, in such a scenario, 
the only requirement on the growth function is \eqref{omega-cond:b}.

\begin{theorem}\label{mainthmBMO2b}
Let $L$ be a constant complex coefficient $M\times M$ system as in \eqref{ars} satisfying the 
strong ellipticity condition \eqref{LegHad}. Also, let $\omega$ be a growth function satisfying 
\eqref{omega-cond:b}. Then the generalized H\"older Dirichlet problem for $L$ in 
$\mathbb{R}^{n}_{+}$, formulated as
\begin{equation}\label{BVP2b}
\left\lbrace
\begin{array}{l}
u\in\mathscr{C}^{\infty}(\mathbb{R}^{n}_{+},\mathbb{C}^M),
\\[4pt]
Lu=0\,\,\text{ in }\,\,\mathbb{R}^n_{+},
\\[4pt] 
\left[u\right]_{\dot{\mathscr{C}}^{\omega}(\mathbb{R}^n_{+},\mathbb{C}^M)}<\infty,
\\[6pt]
\restr{u}{\partial\mathbb{R}^n_{+}}^{{}^{\rm lim}}=f\in\dot{\mathscr{C}}^{\omega}(\mathbb{R}^{n-1},\mathbb{C}^M)
\,\,\text{ on }\,\,\mathbb{R}^{n-1},
\end{array}
\right.
\end{equation}
is well-posed. More specifically, there exists a unique solution which is given by  
\begin{equation}\label{eqn-Dir-Holder:u}
u(x',t)=(P_t^L\ast f)(x'),\qquad\forall\,(x',t)\in{\mathbb{R}}^n_{+},
\end{equation}
where $P^L$ denotes the Poisson kernel for the system $L$ in $\mathbb{R}^{n}_+$ from Theorem~\ref{PoissonConvolution}. 
In addition, $u$ extends to a function in $\dot{\mathscr{C}}^{\omega}(\overline{\mathbb{R}^n_{+}},\mathbb{C}^M)$ with 
$\restr{u}{\partial\mathbb{R}^n_{+}}=f$, and there exists a finite constant $C=C(n,L,\omega)\geq 1$ 
such that
\begin{equation}\label{mainthmBMOeq12b}
C^{-1}[f]_{\dot{\mathscr{C}}^{\omega}(\mathbb{R}^{n-1},\mathbb{C}^M)}
\leq[u]_{\dot{\mathscr{C}}^{\omega}(\mathbb{R}^n_{+},\mathbb{C}^M)}
\leq C[f]_{\dot{\mathscr{C}}^{\omega}(\mathbb{R}^{n-1},\mathbb{C}^M)}.
\end{equation} 
\end{theorem}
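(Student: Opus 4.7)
The plan is to assemble Theorem~\ref{mainthmBMO2b} from three previously established pieces: the existence-plus-upper-bound of Proposition~\ref{propfholder}, the boundary-extension property of Lemma~\ref{wlemma}\eqref{holderclosure}, and the Fatou-type representation of Proposition~\ref{fatou}. Concretely, I would proceed in three steps.

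First, for existence, given $f\in\dot{\mathscr{C}}^{\omega}(\mathbb{R}^{n-1},\mathbb{C}^M)$ I would take the candidate $u(x',t):=(P_t^L\ast f)(x')$ for $(x',t)\in\mathbb{R}^n_+$, which is well-defined by Lemma~\ref{wlemma2}\eqref{inclusionitem} (the inclusion $\dot{\mathscr{C}}^\omega\subset L^1(dx'/(1+|x'|^n))^M$) and Theorem~\ref{PoissonConvolution}\eqref{convo-poiss-sols}. Proposition~\ref{propfholder} then directly furnishes smoothness, $Lu=0$, the continuous extension $u\in\dot{\mathscr{C}}^{\omega}(\overline{\mathbb{R}^n_+},\mathbb{C}^M)$ with $\restr{u}{\partial\mathbb{R}^n_+}=f$ pointwise, and the right-hand inequality in \eqref{mainthmBMOeq12b} with constant $CC_\omega(1+C_\omega)$. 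Since $u$ extends continuously up to the boundary with everywhere-pointwise value $f$, the unrestricted limit condition $\restr{u}{\partial\mathbb{R}^n_+}^{{}^{\rm lim}}=f$ in \eqref{BVP2b} is automatic, so all items of \eqref{BVP2b} are verified.

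Second, for the reverse bound in \eqref{mainthmBMOeq12b}, let $u$ be any solution of \eqref{BVP2b}. Since $u\in\mathscr{C}^\infty(\mathbb{R}^n_+,\mathbb{C}^M)\cap\dot{\mathscr{C}}^{\omega}(\mathbb{R}^n_+,\mathbb{C}^M)$, Lemma~\ref{wlemma}\eqref{holderclosure} extends $u$ continuously to $\overline{\mathbb{R}^n_+}$ with $[u]_{\dot{\mathscr{C}}^\omega(\overline{\mathbb{R}^n_+},\mathbb{C}^M)}\le 2C_\omega[u]_{\dot{\mathscr{C}}^\omega(\mathbb{R}^n_+,\mathbb{C}^M)}$, and the boundary values of this extension must agree with $f$ (by the pointwise limit condition in \eqref{BVP2b}). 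For $x',y'\in\mathbb{R}^{n-1}$ one then estimates
\begin{equation*}
|f(x')-f(y')|=|u(x',0)-u(y',0)|\le[u]_{\dot{\mathscr{C}}^\omega(\overline{\mathbb{R}^n_+},\mathbb{C}^M)}\,\omega(|x'-y'|)\le 2C_\omega[u]_{\dot{\mathscr{C}}^\omega(\mathbb{R}^n_+,\mathbb{C}^M)}\,\omega(|x'-y'|),
\end{equation*}
which yields the left-hand inequality in \eqref{mainthmBMOeq12b}.

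Third, for uniqueness, suppose $u_1,u_2$ are two solutions of \eqref{BVP2b} with the same datum $f$. Each satisfies the hypotheses of Proposition~\ref{fatou}; moreover, since (by the argument above) the continuous extension of each $u_i$ to $\overline{\mathbb{R}^n_+}$ has boundary trace equal to $f$, Proposition~\ref{fatou} yields the identical representation
\begin{equation*}
u_i(x',t)=\bigl(P_t^L\ast f\bigr)(x'),\qquad(x',t)\in\mathbb{R}^n_+,\quad i=1,2,
\end{equation*}
so $u_1=u_2$. The main obstacle is concentrated entirely in the uniqueness step: the three bullets above are essentially plug-and-play once one has the Fatou-type identification of Proposition~\ref{fatou}, and it is that proposition (with its vanishing-at-infinity reduction via Proposition~\ref{uniqprop} and Lemma~\ref{wlemma}\eqref{wlimit}) that does the substantive work and for which hypothesis \eqref{omega-cond:b} is crucial.
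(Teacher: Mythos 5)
Your proposal is correct and follows essentially the same route as the paper: existence and the upper bound come from Proposition~\ref{propfholder}, the lower bound from Lemma~\ref{wlemma}\eqref{holderclosure} applied to the boundary restriction, and uniqueness from the Fatou-type representation of Proposition~\ref{fatou}. No gaps.
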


\begin{proof}
Given $f\in\dot{\mathscr{C}}^{\omega}(\mathbb{R}^{n-1},\mathbb{C}^M)$, define $u$ 
as in \eqref{eqn-Dir-Holder:u}. Proposition~\ref{propfholder} then implies that 
$u$ satisfies all conditions in \eqref{BVP2b}. Also, $u$ extends to a function 
in $\dot{\mathscr{C}}^{\omega}(\overline{\mathbb{R}^n_{+}},\mathbb{C}^M)$ with 
$\restr{u}{\partial\mathbb{R}^n_{+}}=f$, and the second inequality in 
\eqref{mainthmBMOeq12b} holds. Moreover, \eqref{holderclosureeq} yields
\begin{equation}\label{mainthmBMOeq3}
[f]_{\dot{\mathscr{C}}^{\omega}(\mathbb{R}^{n-1},\mathbb{C}^M)} 
=[\restr{u}{\partial\mathbb{R}^n_{+}}]_{\dot{\mathscr{C}}^{\omega}(\mathbb{R}^{n-1},\mathbb{C}^M)}
\leq[u]_{\dot{\mathscr{C}}^{\omega}(\overline{\mathbb{R}^{n}_{+}},\mathbb{C}^M)} 
\leq 2C_\omega [u]_{\dot{\mathscr{C}}^{\omega}(\mathbb{R}^{n}_{+},\mathbb{C}^M)},
\end{equation}
so that the first inequality in \eqref{mainthmBMOeq12b} follows.

It remains to prove that the solution is unique. However, this follows at once from 
Proposition~\ref{fatou}. Indeed, the first three conditions in \eqref{BVP2b} imply \eqref{claimeq} 
and since $\restr{u}{\partial\mathbb{R}^n_{+}}=f$ we conclude that necessarily  
$u(x',t)=\left(P_t^L\ast f\right)(x')$ for every $(x',t)\in\mathbb{R}^n_{+}$.
\end{proof}

Here is the well-posedness for the generalized Morrey-Campanato Dirichlet problem. 
In this case, the growth function is assumed to satisfy both \eqref{omega-cond:a} 
and \eqref{omega-cond:b}.

\begin{theorem}\label{mainthmBMO2}
Let $L$ be a constant complex coefficient $M\times M$ system as in \eqref{ars} satisfying the 
strong ellipticity condition \eqref{LegHad}. Fix $p\in[1,\infty)$ along with $q\in(0,\infty]$, 
and let $\omega$ be a growth function satisfying \eqref{omega-cond:a} and \eqref{omega-cond:b}. 
Then the generalized Morrey-Campanato Dirichlet problem for $L$ in $\mathbb{R}^{n}_{+}$, namely
\begin{equation}\label{BVP2}
\left\lbrace
\begin{array}{l}
u\in\mathscr{C}^{\infty}(\mathbb{R}^{n}_{+},\mathbb{C}^M),
\\[4pt]
Lu=0\,\,\text{ in }\,\,\mathbb{R}^n_{+},
\\[4pt] 
\norm{u}_{**}^{(\omega,q)}<\infty,
\\[4pt]
\restr{u}{\partial\mathbb{R}^n_{+}}^{{}^{\rm nt.lim}}=f\in\mathscr{E}^{\omega,p}(\mathbb{R}^{n-1},\mathbb{C}^M)
\,\,\text{ a.e. on }\,\,\mathbb{R}^{n-1},
\end{array}
\right.
\end{equation}
is well-posed. More specifically, there exists a unique solution which is given by  
\begin{equation}\label{eqn-Dir:M-C:u}
u(x',t)=(P_t^L\ast f)(x'),\qquad\forall\,(x',t)\in{\mathbb{R}}^n_{+},
\end{equation}
where $P^L$ denotes the Poisson kernel for the system $L$ in $\mathbb{R}^{n}_+$ from Theorem~\ref{PoissonConvolution}. 
Moreover, with $W$ defined as in \eqref{omega-cond:WDef}, the solution $u$ extends to a function in 
$\dot{\mathscr{C}}^{W}(\overline{\mathbb{R}^{n}_{+}},\mathbb{C}^M)$ with 
$\restr{u}{\partial\mathbb{R}^n_{+}}=f$ a.e. on $\mathbb{R}^{n-1}$, and there 
exists a finite constant $C=C(n,L,\omega,p,q)\geq 1$ for which 
\begin{equation}\label{mainthmBMOeq12}
C^{-1}\norm{f}_{\mathscr{E}^{W,p}(\mathbb{R}^{n-1},\mathbb{C}^M)}
\leq\norm{u}_{**}^{(\omega,q)}\leq C\norm{f}_{\mathscr{E}^{\omega,p}(\mathbb{R}^{n-1},\mathbb{C}^M)}.
\end{equation}
Furthermore, all results remain valid if $\norm{\cdot}_{**}^{(\omega,q)}$ 
is replaced everywhere by $\norm{\cdot}_{**}^{(\omega,\exp)}$.
\end{theorem}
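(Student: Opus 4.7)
\medskip

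\textbf{Proof plan for Theorem~\ref{mainthmBMO2}.} The strategy mirrors that of Theorem~\ref{mainthmBMO2b}: existence comes from convolving the datum with the Poisson kernel, the quantitative estimates follow from the toolkit developed in Sections~\ref{section:props-elliptic}--\ref{section:Existence}, and uniqueness is extracted from the Fatou-type Proposition~\ref{fatou}. The twist compared with the H\"older setting is that the natural regularity class for a solution is governed by the auxiliary growth function $W$ of \eqref{omega-cond:WDef} rather than $\omega$ itself, which is why Lemma~\ref{wlemma3} is indispensable.

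\emph{Step 1 (existence and the upper estimate).} Given $f\in\mathscr{E}^{\omega,p}(\mathbb{R}^{n-1},\mathbb{C}^M)$, set $u(x',t):=(P_t^L\ast f)(x')$ as in \eqref{eqn-Dir:M-C:u}. Proposition~\ref{propstep1} immediately delivers every bulk property required in \eqref{BVP2}, together with the right-hand inequality of \eqref{mainthmBMOeq12} for the chosen $q\in(0,\infty]$ and likewise for $\|\cdot\|_{**}^{(\omega,\exp)}$.

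\emph{Step 2 (the $\dot{\mathscr{C}}^W$-extension and the identification of the trace).} Combining \eqref{uqeq:1} with Proposition~\ref{prop:proper-sols}\eqref{sup-q-car} (or directly using the version with $\|\cdot\|_{**}^{(\omega,\exp)}$) yields $\|u\|_{**}^{(\omega,\infty)}<\infty$. Because $\omega$ satisfies both \eqref{omega-cond:a} and \eqref{omega-cond:b}, Proposition~\ref{prop:proper-sols}\eqref{CW-car-q} applies and produces a continuous extension
\begin{equation*}
u\in\dot{\mathscr{C}}^{W}(\overline{\mathbb{R}^n_{+}},\mathbb{C}^M).
\end{equation*}
Since this extension is continuous up to the boundary, its value on $\partial\mathbb{R}^n_{+}$ coincides with the non-tangential limit wherever the latter exists. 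Therefore the boundary condition in \eqref{BVP2} forces $\restr{u}{\partial\mathbb{R}^n_{+}}=f$ almost everywhere on $\mathbb{R}^{n-1}$.

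\emph{Step 3 (the lower estimate).} Having placed $\restr{u}{\partial\mathbb{R}^n_{+}}$ in $\dot{\mathscr{C}}^{W}(\mathbb{R}^{n-1},\mathbb{C}^M)$, Lemma~\ref{wlemma2}\eqref{inclusionitem} applied with $W$ in place of $\omega$ (legitimate by Lemma~\ref{wlemma3}) followed by Lemma~\ref{wlemma}\eqref{holderclosure}, Proposition~\ref{prop:proper-sols}\eqref{CW-car-q} and Proposition~\ref{prop:proper-sols}\eqref{sup-q-car} chains into
\begin{equation*}
\|f\|_{\mathscr{E}^{W,p}(\mathbb{R}^{n-1},\mathbb{C}^M)}
\lesssim [\restr{u}{\partial\mathbb{R}^n_{+}}]_{\dot{\mathscr{C}}^{W}(\mathbb{R}^{n-1},\mathbb{C}^M)}
\lesssim [u]_{\dot{\mathscr{C}}^{W}(\overline{\mathbb{R}^n_{+}},\mathbb{C}^M)}
\lesssim \|u\|_{**}^{(\omega,\infty)}
\lesssim \|u\|_{**}^{(\omega,q)},
\end{equation*}
which is the left-hand inequality in \eqref{mainthmBMOeq12}. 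The analogous chain with $\|u\|_{**}^{(\omega,\exp)}$ at the far right is obtained by invoking the $\exp$-variant of Proposition~\ref{prop:proper-sols}\eqref{sup-q-car} via \eqref{uqeq:1}.

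\emph{Step 4 (uniqueness; the main obstacle).} Let $u$ be any solution of \eqref{BVP2}. Step 2 shows that $u$ automatically belongs to $\dot{\mathscr{C}}^{W}(\overline{\mathbb{R}^n_{+}},\mathbb{C}^M)$, so we would like to invoke the Fatou-type representation \eqref{claimeq}. The delicate point is that Proposition~\ref{fatou} was stated only for growth functions satisfying \eqref{omega-cond:b}, while we have assumed that hypothesis on $\omega$, not on $W$. This gap is closed precisely by Lemma~\ref{wlemma3}, which asserts that $W$ again satisfies \eqref{omega-cond:b} (with constant $C_W\leq 1+C_\omega^2$). Applying Proposition~\ref{fatou} with $W$ in the role of the growth function therefore yields
\begin{equation*}
u(x',t)=\bigl(P_t^L\ast\restr{u}{\partial\mathbb{R}^n_{+}}\bigr)(x'),\qquad \forall\,(x',t)\in\mathbb{R}^n_{+}.
\end{equation*}
Combining this with the a.e.\ identification $\restr{u}{\partial\mathbb{R}^n_{+}}=f$ from Step~2 (invariance of convolution with $P_t^L$ under a.e.\ modification of the integrand) delivers $u(x',t)=(P_t^L\ast f)(x')$, completing the proof. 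The $\exp$-version requires no additional ideas: one simply substitutes $\norm{\cdot}_{**}^{(\omega,\exp)}$ for $\norm{\cdot}_{**}^{(\omega,q)}$ in Steps~1--3 and appeals to the same instances of Propositions~\ref{prop:proper-sols} and \ref{propstep1}.
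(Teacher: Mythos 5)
Your proposal is correct and follows essentially the same route as the paper: existence and the upper bound via Proposition~\ref{propstep1}, the $\dot{\mathscr{C}}^{W}$-extension and trace identification via Proposition~\ref{prop:proper-sols} parts \eqref{sup-q-car} and \eqref{CW-car-q}, the lower bound through the chain $\norm{f}_{\mathscr{E}^{W,p}}\lesssim[\restr{u}{\partial\mathbb{R}^n_{+}}]_{\dot{\mathscr{C}}^{W}}\lesssim\norm{u}_{**}^{(\omega,\infty)}\lesssim\norm{u}_{**}^{(\omega,q)}$, and uniqueness by applying Proposition~\ref{fatou} with $W$ as the growth function, justified by Lemma~\ref{wlemma3}. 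This matches the paper's argument step for step, including the key observation that the Fatou-type result must be invoked for $W$ rather than $\omega$.
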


\begin{proof}
Having fixed $f\in\mathscr{E}^{\omega,p}(\mathbb{R}^{n-1},\mathbb{C}^M)$, if $u$ is defined as in 
\eqref{eqn-Dir:M-C:u} then Proposition~\ref{propstep1} implies the validity of all conditions 
in \eqref{BVP2} and also of the second inequality in \eqref{mainthmBMOeq12} 
(even replacing $q$ by $\exp$). In the case $q=\infty$ we invoke 
Proposition~\ref{prop:proper-sols}\eqref{CW-car-q} to obtain that 
$u\in\dot{\mathscr{C}}^{W}(\overline{\mathbb{R}^{n}_{+}},\mathbb{C}^M)$ 
in the sense of Lemma~\ref{wlemma}\eqref{holderclosure}. Note that we also have
\begin{align}\label{8fafrafr}
[\restr{u}{\partial\mathbb{R}^n_{+}}]_{\dot{\mathscr{C}}^{W}(\mathbb{R}^{n-1},\mathbb{C}^M)} 
\leq[u]_{\dot{\mathscr{C}}^{W}(\overline{\mathbb{R}^{n}_{+}},\mathbb{C}^M)} 
\leq 2C_{W}[u]_{\dot{\mathscr{C}}^{W}(\mathbb{R}^{n}_{+},\mathbb{C}^M)}
\leq C(C_\omega)^4\norm{u}_{**}^{(\omega,\infty)}
\end{align}
thanks to \eqref{holderclosureeq} (for the growth function $W$), Lemma~\ref{wlemma3}, and \eqref{mainthmBMOeq4}. 

Given that, on the one hand,  
$\restr{u}{\partial\mathbb{R}^n_{+}}=\restr{u}{\partial\mathbb{R}^n_{+}}^{{}^{\rm nt.lim}}$ 
everywhere in $\mathbb{R}^{n-1}$ due to the fact that $u\in\dot{\mathscr{C}}^{W}(\overline{\mathbb{R}^{n}_{+}},\mathbb{C}^M)$,  
and that, on the other hand, $\restr{u}{\partial\mathbb{R}^n_{+}}^{{}^{\rm nt.lim}}=f $ a.e. in $\mathbb{R}^{n-1}$,
we conclude that $\restr{u}{\partial\mathbb{R}^n_{+}}=f$ a.e. in $\mathbb{R}^{n-1}$. In addition, 
\eqref{easyinclusion} (applied to $W$), Lemma~\ref{wlemma3}, and \eqref{8fafrafr} permit us to estimate  
\begin{multline}\label{mainthmBMOeq3b}
\norm{f}_{\mathscr{E}^{W,p}(\mathbb{R}^{n-1},\mathbb{C}^M)}
=\|\restr{u}{\partial\mathbb{R}^n_{+}}\|_{\mathscr{E}^{W,p}(\mathbb{R}^{n-1},\mathbb{C}^M)}
\leq\sqrt{n-1}\,C_W[\restr{u}{\partial\mathbb{R}^n_{+}}]_{\dot{\mathscr{C}^{W}}(\mathbb{R}^{n-1},\mathbb{C}^M)}
\\[4pt]
\leq C(C_\omega)^6\norm{u}_{**}^{(\omega,\infty)}\leq C(C_\omega)^7\norm{u}_{**}^{(\omega,q)}
\leq C(C_\omega)^7\norm{u}_{**}^{(\omega,\exp)},
\end{multline}
where $0<q<\infty$ and where we have also used Proposition~\ref{prop:proper-sols}\eqref{sup-q-car} and \eqref{uqeq:1}.

To prove that the solution is unique, we note that having $\norm{u}_{**}^{(\omega,q)}<\infty$ 
for a given $q\in(0,\infty]$, or even $\norm{u}_{**}^{(\omega,\exp)}<\infty$, implies that 
$\norm{u}_{**}^{(\omega,\infty)}<\infty$ by Proposition~\ref{prop:proper-sols}\eqref{sup-q-car} 
and \eqref{uqeq:1}. Having established this, Proposition~\ref{prop:proper-sols}\eqref{CW-car-q} applies 
and yields that $u\in\dot{\mathscr{C}}^{W}(\overline{\mathbb{R}^{n}_{+}},\mathbb{C}^M)$. 
Consequently, $\restr{u}{\partial\mathbb{R}^n_{+}}=\restr{u}{\partial\mathbb{R}^n_{+}}^{{}^{\rm nt.lim}}$ 
everywhere in $\mathbb{R}^{n-1}$, and if we also take into account the boundary condition from \eqref{BVP2}, 
we conclude that $\restr{u}{\partial\mathbb{R}^n_{+}}=f$ a.e. on $\mathbb{R}^{n-1}$. 
Moreover, since Lemma~\ref{wlemma3} ensures that $W$ is a growth function satisfying \eqref{omega-cond:b},
we may invoke Proposition~\ref{fatou} to write 
\begin{equation}\label{7ttrR}
u(x',t)=\left(P_t^L\ast(\restr{u}{\partial\mathbb{R}^n_{+}})\right)(x')=\left(P_t^L\ast f\right)(x'),
\qquad\forall\,(x',t)\in\mathbb{R}^n_{+}.
\end{equation}
The proof of the theorem is therefore finished. 
\end{proof}

\begin{remark}
Theorems~\ref{mainthmBMO2b} and \ref{mainthmBMO2} are closely related. To elaborate in this, 
fix a growth function $\omega$ satisfying \eqref{omega-cond:a} and \eqref{omega-cond:b}. 
From \eqref{easyinclusion} and Proposition~\ref{propstep1} it follows that, given any
$f\in\dot{\mathscr{C}}^{\omega}(\mathbb{R}^{n-1},\mathbb{C}^M)$, the unique solution of the 
boundary value problem \eqref{BVP2b}, i.e., $u(x',t)=(P_t^L\ast f)(x')$ for $(x',t)\in\mathbb{R}^{n}_{+}$, 
also solves \eqref{BVP2}, regarding now $f$ as a function in 
$\mathscr{E}^{\omega,p}(\mathbb{R}^{n-1},\mathbb{C}^M)$ (cf. \eqref{inclusionE})
with $p\in[1,\infty)$ and $q\in(0,\infty]$ arbitrary (and even with  
$\norm{\cdot}_{**}^{(\omega,q)}$ replaced by $\norm{\cdot}_{**}^{(\omega,\exp)}$). 
As such, $u$ satisfies \eqref{mainthmBMOeq12} whenever \eqref{omega-cond:a} holds. 

This being said, the fact that $f\in\mathscr{E}^{\omega,p}(\mathbb{R}^{n-1},\mathbb{C}^M)$ 
does not guarantee, in general, that the corresponding solution satisfies 
$u\in\dot{\mathscr{C}}^{\omega}(\mathbb{R}^n_{+},\mathbb{C}^M)$, even though we have 
established above that the solution to the boundary value problem \eqref{BVP2} belongs 
to $\dot{\mathscr{C}}^{W}(\mathbb{R}^n_{+},\mathbb{C}^M)$. Note that, as seen from 
\eqref{7rsSS}-\eqref{7tEEE} and \eqref{wW}, the space $\dot{\mathscr{C}}^{W}(\mathbb{R}^n_{+},\mathbb{C}^M)$
contains $\dot{\mathscr{C}}^{\omega}(\mathbb{R}^n_{+},\mathbb{C}^M)$.

This aspect is fully clarified with the help of Example~\ref{example} discussed further below, where 
we construct some growth function $\omega$ satisfying \eqref{omega-cond:a}, \eqref{omega-cond:b}, and 
for which the space $\mathscr{E}^{\omega,1}(\mathbb{R}^{n-1},\mathbb{C})$ 
is strictly bigger than $\dot{\mathscr{C}}^{\omega}(\mathbb{R}^{n-1},\mathbb{C})$. 
Its relevance for the issue at hand is as follows. Consider the boundary problem \eqref{BVP2} 
formulated with $L$ being the Laplacian in ${\mathbb{R}}^n$ and with 
$f\in\mathscr{E}^{\omega,1}(\mathbb{R}^{n-1},\mathbb{C})\setminus
\dot{\mathscr{C}}^{\omega}(\mathbb{R}^{n-1},\mathbb{C})$ as boundary datum. Its solution 
$u$ then necessarily satisfies $u\notin\dot{\mathscr{C}}^{\omega}(\mathbb{R}^n_{+},\mathbb{C})$,
for otherwise Lemma~\ref{wlemma}\eqref{holderclosure} would imply 
$u\in\dot{\mathscr{C}}^{\omega}(\overline{\mathbb{R}^n_{+}},\mathbb{C}^M)$ and since 
$\restr{u}{\partial\mathbb{R}^n_{+}}=\restr{u}{\partial\mathbb{R}^n_{+}}^{{}^{\rm nt.lim}}=f$ 
a.e. on $\mathbb{R}^{n-1}$ and $f$ is continuous in $\mathbb{R}^{n-1}$ we would conclude that 
$f$ coincides everywhere with 
$\restr{u}{\partial\mathbb{R}^n_{+}}\in\dot{\mathscr{C}}^{\omega}(\mathbb{R}^{n-1},\mathbb{C})$, a contradiction.
\end{remark}

In spite of the previous remark, Theorem~\ref{mainthmBMO} states that the boundary problems 
\eqref{BVP2b} and \eqref{BVP2} are actually equivalent under the stronger assumption \eqref{omega-cond:main}
on the growth function. Here is the proof of Theorem~\ref{mainthmBMO}.

\vskip 0.08in
\begin{proof}[Proof of Theorem~\ref{mainthmBMO}]
We start with the observation that \eqref{omega-cond:main} and Lemma~\ref{wlemma3} 
yield $C_0^{-1}W(t)\leq\omega(t)\leq C_0 W(t)$ for each $t\in(0,\infty)$. Therefore, 
\begin{equation}\label{spaceidentity1} 
\dot{\mathscr{C}^{\omega}}(\mathbb{R}^{n}_{+},\mathbb{C}^M)
=\dot{\mathscr{C}^{W}}(\mathbb{R}^{n}_{+},\mathbb{C}^M),\quad
\dot{\mathscr{C}^{\omega}}(\overline{\mathbb{R}^{n}_{+}},\mathbb{C}^M) 
=\dot{\mathscr{C}^{W}}(\overline{\mathbb{R}^{n}_{+}},\mathbb{C}^M), 
\end{equation} 
and 
\begin{equation}\label{spaceidentity2}
\mathscr{E}^{\omega,p}(\mathbb{R}^{n-1},\mathbb{C}^M) 
=\mathscr{E}^{W,p}(\mathbb{R}^{n-1},\mathbb{C}^M), 
\end{equation} 
as vector spaces, with equivalent norms.

Having made these identifications, we now proceed to observe that \eqref{bvp-Hol-Dir:main} follows 
directly from Theorem~\ref{mainthmBMO2b}, while \eqref{bvp-MC-Dir:main} is implied by 
Theorem~\ref{mainthmBMO2} with the help of \eqref{spaceidentity1} and \eqref{spaceidentity2}. 
To deal with \eqref{equiv:main}, we first observe that the left-to-right inclusion follows from 
Lemma~\ref{wlemma2}\eqref{inclusionitem}, whereas \eqref{easyinclusion} provides the accompanying 
estimate for the norms. For the converse inclusion, fix 
$f\in\mathscr{E}^{\omega,p}(\mathbb{R}^{n-1},\mathbb{C}^M)$ and set 
$u(x',t):=(P_t^L\ast f)(x')$ for every $(x',t)\in\mathbb{R}^{n}_{+}$. 
Theorem~\ref{mainthmBMO2} and \eqref{spaceidentity1} then imply that 
$u\in\dot{\mathscr{C}^{\omega}}(\overline{\mathbb{R}^{n}_{+}},\mathbb{C}^M)$ 
with $\restr{u}{\partial\mathbb{R}^n_{+}}= f$ a.e. on $\mathbb{R}^{n-1}$. 
Introduce $\widetilde{f}:=\restr{u}{\partial\mathbb{R}^n_{+}}$ and note that 
$\widetilde{f}\in\dot{\mathscr{C}^{\omega}}(\mathbb{R}^{n-1},\mathbb{C}^M)$ 
with $\widetilde{f}=f$ a.e. on $\mathbb{R}^{n-1}$. 
Then $u(x',t)=(P_t^L\ast\widetilde{f})(x')$ and, thanks to \eqref{mainthmBMOeq12b}, 
\eqref{relation-all-norms:1}, and \eqref{mainthmBMOeq12}, we have 
\begin{equation}\label{y6tgg}
[\widetilde{f}\,]_{\dot{\mathscr{C}}^{\omega}(\mathbb{R}^{n-1},\mathbb{C}^M)} 
\leq C[u]_{\dot{\mathscr{C}}^{\omega}(\mathbb{R}^n_{+},\mathbb{C}^M)} 
\leq C\norm{f}_{\mathscr{E}^{\omega,p}(\mathbb{R}^{n-1},\mathbb{C}^M)}.
\end{equation}
This completes the treatment of \eqref{equiv:main}, and finishes the proof of Theorem~\ref{mainthmBMO}.
\end{proof}

We are now in a position to give the proof of Corollary~\ref{maincorBMO}.

\vskip 0.08in
\begin{proof}[Proof of Corollary~\ref{maincorBMO}]
We start by observing that \eqref{eq:qfrafr} is a direct consequence of  
Proposition~\ref{prop:proper-sols}\eqref{all-comp}. In particular, the last three equalities 
in \eqref{HLMO} follow at once. Also, the fact that the second set in the first 
line of \eqref{HLMO} is contained in $\dot{\mathscr{C}}^{\omega}(\mathbb{R}^{n-1},\mathbb{C}^M)$ 
is a consequence of Lemma~\ref{wlemma}\eqref{holderclosure}. 
Finally, given any $f\in\dot{\mathscr{C}}^{\omega}(\mathbb{R}^{n-1},\mathbb{C}^M)$, if $u$ is the solution 
of \eqref{BVPb} corresponding to this choice of boundary datum, then $\restr{u}{\partial\mathbb{R}^n_{+}}=f$ 
and $u$ also satisfies the required conditions to be an element in the second set displayed in \eqref{HLMO}.
\end{proof}

The following example shows that conditions \eqref{omega-cond:a} and \eqref{omega-cond:b} 
do not imply \eqref{maincorBMOeq}.

\begin{example}\label{example}	
Fix two real numbers $\alpha,\beta\in(0,1)$ and consider the growth function 
$\omega:(0,\infty)\to(0,\infty)$ defined for each $t>0$ as
\begin{equation}\label{6tgtfa}
\omega(t):=
\left\lbrace
\begin{array}{ll}
t^{\alpha}, &\text{ if }t\leq 1,
\\[4pt]
1+(\log t)^{\beta}, &\text{ if }t>1.
\end{array}
\right.
\end{equation} 
Clearly, $\omega$ satisfies \eqref{omega-cond:a}, and we also claim that 
$\omega$ satisfies \eqref{omega-cond:b}. Indeed, for $t\leq 1$, 
\begin{equation}\label{65tr}
\int_t^{\infty}\frac{\omega(s)}{s}\frac{ds}{s}=\int_{t}^1 s^{\alpha-1}\frac{ds}{s} 
+\int_1^{\infty}\frac{1+(\log s)^{\beta}}{s^2}\,ds\leq C(t^{\alpha-1}+1)\leq 2C\,t^{\alpha-1}.
\end{equation}
For $t\in[1,\infty)$, define
\begin{equation}\label{65fff}
F(t):=\frac{\displaystyle t\int_t^{\infty}\frac{\omega(s)}{s}\frac{ds}{s}}{\omega(t)} 
=\frac{\displaystyle\int_t^{\infty}\frac{1+(\log s)^{\beta}}{s^2\,ds}}{\displaystyle\frac{1+(\log t)^{\beta}}{t}},
\end{equation}
which is a continuous function in $[1,\infty)$ and satisfies $F(1)<\infty$. 
Moreover, using L'H\^{o}pital's rule,
\begin{equation}\label{6tfff}
\lim_{t\to\infty}F(t)=\lim_{t\to\infty}\frac{-(1+(\log t)^\beta)}{\beta(\log t)^{\beta-1}-(1+(\log t)^\beta)}=1.
\end{equation} 
Hence, $F$ is bounded, which amounts to having $\omega$ satisfy \eqref{omega-cond:b}. 
The function $W$, defined as in \eqref{omega-cond:WDef}, is currently given by
\begin{equation}\label{63dd}
W(t)=
\left\lbrace
\begin{array}{ll}
\dfrac1{\alpha} t^{\alpha}, &\text{ if }t\leq 1,
\\[10pt]
\dfrac{1}{\alpha}+\dfrac1{\beta+1}(\log t)^{\beta+1}+\log t, &\text{ if }t>1.
\end{array}
\right.
\end{equation}
Since \eqref{omega-cond:main} would imply $W(t)\leq C\omega(t)$ which is not the case for $t$ sufficiently large, 
we conclude that the growth function $\omega$ satisfies \eqref{omega-cond:a} and \eqref{omega-cond:b}
but it does not satisfy \eqref{omega-cond:main}.

For this choice of $\omega$, we now proceed to check that 
$\mathscr{E}^{\omega,1}(\mathbb{R}^{n-1},\mathbb{C})\neq\dot{\mathscr{C}}^{\omega}(\mathbb{R}^{n-1},\mathbb{C})$.
To this end, consider the function
\begin{equation}
f(x):=\log_{+}|x_1|,\qquad\forall\,x=(x_1,x_2,\dots,x_{n-1})\in\mathbb{R}^{n-1},
\end{equation}
where $\log_{+}t:=\max\{0,\log t\}$. With $e_1=:(1,0,\dots,0)\in\mathbb{R}^{n-1}$ we then have
\begin{equation}\label{6r4d5}
\sup_{x\neq y}\frac{|f(x)-f(y)|}{\omega(|x-y|)} 
\geq\lim_{x_1\to\infty}\frac{|f(x_1e_1)-f(e_1)|}{\omega(|x_1e_1-e_1|)} 
=\lim_{x_1\to\infty}\frac{\log x_1}{1+(\log (x_1-1))^{\beta}}=\infty,
\end{equation} 
since $\beta<1$. This means that $f\notin\dot{\mathscr{C}}^{\omega}(\mathbb{R}^{n-1},\mathbb{C})$. 
To prove that $f\in\mathscr{E}^{\omega,1}(\mathbb{R}^{n-1},\mathbb{C})$, consider 
$\widetilde{Q}:=(a,b)\times Q \subset\mathbb{R}^{n-1}$, where $Q$ is an arbitrary cube in 
$\mathbb{R}^{n-2}$ and $a,b\in\mathbb{R}$ are arbitrary numbers satisfying $a<b$. Then,
\begin{align}\label{eq:example1}
\norm{f}_{\mathscr{E}^{\omega,1}(\mathbb{R}^{n-1},\mathbb{C})} 
&\leq\sup_{\widetilde{Q}\subset\mathbb{R}^{n-1}}\frac1{\omega(\ell(\widetilde{Q}))}
\fint_{\widetilde{Q}}\fint_{\widetilde{Q}}|f(x)-f(y)|\,dx\,dy 
\nonumber\\[4pt]
&\leq\sup_{a<b}\frac{1}{\omega(b-a)}H(a,b),
\end{align}
where 
\begin{equation}\label{eq:example1-hhh}
H(a,b):=\fint_a^b\fint_a^b\big|\log_{+}|x_1|-\log_{+}|y_1|\big|\,dx_1\,dy_1.
\end{equation}
We shall now prove that the right-hand side of \eqref{eq:example1} is finite considering several different cases. 

\vskip 0.08in
\noindent{\bf Case~I: $\boldsymbol{ 1\leq a<b}$.} In this scenario, define 
\begin{align}\label{eq:example2-ggg}
G(\lambda):=1+2\lambda-2\lambda(\lambda+1)\log\left(1+\frac1{\lambda}\right),\quad\forall\,\lambda>0.
\end{align} 
Note that $G$ is continuous in $(0,\infty$), $G(0)=1$, and by 
L'H\^{o}pital's rule, $\lim_{\lambda\to\infty}G(\lambda)=0$, hence $G$ is bounded.
Also, 
\begin{align}\label{eq:example2}
H(a,b)=\frac{b^2-a^2-2ab\log (b/a)}{(b-a)^2}=G\big(a/(b-a)\big).
\end{align} 
Consequently, whenever $b-a\geq 1$ we have
\begin{equation}\label{eq:example3}
H(a,b)=G\big(a/(b-a)\big)\leq C\leq C\big(1+(\log (b-a)\big)^{\beta})=C\omega(b-a).
\end{equation} 
Again by L'H\^{o}pital's rule, $\lim_{\lambda\to\infty}\lambda^{\alpha} G(\lambda)=0$, 
hence $\lambda^{\alpha}G(\lambda)\leq C$ for every $\lambda>0$. Therefore, whenever $0<b-a<1$ we may write 
\begin{equation}\label{eq:example4}
H(a,b)=G\big(a/(b-a)\big)\leq C\left(\frac{b-a}{a}\right)^{\alpha}\leq C(b-a)^\alpha=C\omega(b-a).
\end{equation}
All these show that $H(a,b)\leq C\omega(b-a)$ in this case.

\vskip 0.08in
\noindent{\bf Case~II: $\boldsymbol{a<b\leq -1}$.} This case is analogous to the previous one by symmetry.

\vskip 0.08in
\noindent{\bf Case~III: $\boldsymbol{-1\leq a<b\leq 1}$.} This case is straightforward 
since $H(a,b)=0$, given that $\log_{+}|x_1|=\log_{+}|y_1|=0$ whenever $a<x_1,y_1<b$. 

\vskip 0.08in
\noindent{\bf Case~IV: $\boldsymbol{-1<a<1<b}$.} In this case we obtain
\begin{align}\label{eq:example5}
H(a,b) &=\frac{1}{(b-a)^2}\int_1^b\int_1^b|\log x_1-\log y_1|\,dx_1\,dy_1
\nonumber\\[4pt] 
&\qquad+\frac1{(b-a)^2}\int_a^1\int_1^b\log x_1\,dx_1\,dy_1 
+\frac{1}{(b-a)^2}\int_1^b\int_a^1\log y_1\,dx_1\,dy_1
\nonumber\\[4pt] 
&\leq\frac{(b-1)^2}{(b-a)^2}H(1,b)+2\frac{(1-a)(b\log b-b+1)}{(b-a)^2}.
\end{align} 
For the first term in the right-hand side of \eqref{eq:example5}, we use \eqref{eq:example3} 
and \eqref{eq:example4} (written with $a:=1$) and obtain, keeping in mind that in this case $a<1$,
\begin{equation}\label{765FF}
\frac{(b-1)^2}{(b-a)^2}H(1,b)\leq C\omega(b-1)\left(\frac{b-1}{b-a}\right)^2\leq C\omega(b-a).
\end{equation}
To bound the second term in the right-hand side of \eqref{eq:example5}, we first use the 
fact that $1-a<2$ and $\log t\leq t-1$ for every $t\geq 1$ to obtain
\begin{align}
\frac{(1-a)(b\log b-b+1)}{(b-a)^2} &\leq 2\frac{b(b-1)-b+1}{(b-a)^2}
\leq 2\left(\frac{b-1}{b-a}\right)^2\leq 2 
\nonumber\\[4pt]
&\leq 2\big(1+(\log(b-a))^{\beta}\big)=2\omega(b-a),
\end{align}
whenever $b-a\geq 1$. To study the case when $b-a<1$, bring in the auxiliary function
\begin{equation}\label{ttFF}
\widetilde{G}(\lambda):=\frac{\lambda\log \lambda-\lambda+1}{(\lambda-1)^{1+\alpha}},\qquad\forall\lambda>1.
\end{equation}
By L'H\^{o}pital's rule, $\lim_{\lambda\to 1^{+}}\widetilde{G}(\lambda)=0$, hence $\widetilde{G}(\lambda)\leq C$ 
for each $\lambda\in(1,2]$. If $b-a<1$, we clearly have $1<b\leq 2$ which, in turn, permits us to estimate
\begin{align}\label{6ttffa}
\frac{(1-a)(b\log b-b+1)}{(b-a)^2}
&=\frac{(1-a)(b-1)^{1+\alpha}G(b)}{(b-a)^2}
\nonumber\\[4pt]
&\leq\frac{C(b-1)^{1+\alpha}}{b-a}\leq C(b-a)^{\alpha}=C\omega(b-a).
\end{align}
Consequently, we have obtained that $H(a,b)\leq C\omega(b-a)$ in this case as well. 

\vskip 0.08in
\noindent{\bf Case~V: $\boldsymbol{a<-1<b<1}$.} This is analogue to Case~IV, again by symmetry.

\vskip 0.08in
\noindent{\bf Case~VI: $\boldsymbol{a<-1,\,\,b>1}$.} We break the interval $(a,b)$ into two 
intervals $(a,0)$ and $(0,b)$ to obtain
\begin{equation}\label{6t5ff}
H(a,b)\leq\frac{1}{(b-a)^2}(I+II),
\end{equation}
where, using Case~IV and Case~V, 
\begin{align}\label{6g5f5}
I &:=\int_a^0\int_a^0\big|\log_{+}|x_1|-\log_{+}|y_1|\big|\,dx_1\,dy_1 
+\int_0^b\int_0^b|\log_{+}x_1-\log_{+}y_1|\, dx_1\,dy_1
\nonumber\\[4pt] 
&=H(a,0)(0-a)^2+H(0,b)(b-0)^2\leq C|a|^2\omega(|a|)+C\,b^2\omega(b)
\nonumber\\[4pt] 
&\leq 2C(b-a)^2\omega(b-a).
\end{align} 
Similarly, by Case~IV,
\begin{align}\label{9g9h9h}
II &:=\int_a^0\int_0^b\big|\log_{+}|x_1|-\log_{+}|y_1|\big|\, dx_1\,dy_1 
+\int_0^b\int_a^0\big|\log_{+}|x_1|-\log_{+}|y_1|\big|\,dx_1\,dy_1
\nonumber\\[4pt] 
&\leq 2\big(\max\{|a|,b\}-0\big)^2H\big(\max\{|a|,b\},0\big)
\nonumber\\[4pt] 
&\leq 2C\max\{|a|,b\}^2\,\omega(\max\{|a|,b\}) 
\nonumber\\[4pt] 
&\leq 2C(b-a)^2\,\omega(b-a).
\end{align}
Thus, $H(a,b)\leq C\omega(b-a)$ in this case also.

Collectively, the results in Cases~I-VI prove that 
$f\in\mathscr{E}^{\omega,1}(\mathbb{R}^{n-1},\mathbb{C})$.
\end{example}

\appendix

\section{John-Nirenberg's Inequality Adapted to Growth Functions}\label{section:JN}

In  what follows we assume that all cubes are half-open, that is, they can be written 
in the form $Q=[a_1,a_1+\ell(Q))\times\dots\times [a_{n-1},a_{n-1}+\ell(Q))$ with 
$a_i\in\mathbb{R}^{n-1}$ and $\ell(Q)>0$. Notice that since $\partial Q$ has Lebesgue 
measure zero the half-open assumption is harmless. Subdividing dydically yields  
the collection of (half-open) dyadic-subcubes of a given cube $Q$, which we shall denote by $\mathbb{D}_{Q}$.
For the following statement, and with the aim of considering global results, it is also 
convenient to consider the case $Q=\mathbb{R}^{n-1}$ in which scenario we take $\mathbb{D}_{Q}$ 
to be the classical dyadic grid generated by $[0,1)^{n-1}$, or any other dyadic grid. Let us also recall 
the definition of the dyadic Hardy-Littlewood maximal function localized to a given cube $Q$, i.e., 
\begin{equation}\label{eq:def-Md}
\big(M^d_Q f\big)(x):=\sup_{x\in Q'\in\mathbb{D}_Q}\fint_{Q'}|f(y')|\,dy',\qquad x\in Q,
\end{equation}
for each $f\in L^1(Q)$. The following result is an extension of the John-Nirenberg inequality 
obtained in \cite{Hofmann-Mayboroda, HofMarMay} (when $\omega\equiv 1$) adapted to our growth function.  

\begin{lemma}\label{lem:appendix}
Let $F\in L^2_{\loc}(\mathbb{R}^n_{+})$ and let $\varphi:[0,\infty)\to[0,\infty)$ be a non-decreasing function. 
Let $Q_0\subset\mathbb{R}^{n-1}$ be an arbitrary half-open cube, or $Q_0=\mathbb{R}^{n-1}$. Assume that there 
are numbers $\alpha\in(0,1)$ and $N\in(0,\infty)$ such that
\begin{equation}\label{eq:appendixA1}
\bigg|\bigg\{x'\in Q:\,\frac{1}{\varphi(\ell(Q))}\bigg(\int_0^{\ell(Q)}\int_{|x'-y'|<\kappa s}
|F(y',s)|^2\,dy'\frac{ds}{s^n}\bigg)^{1/2}>N\bigg\}\bigg|\leq\alpha|Q|
\end{equation} 
for every cube $Q\in\mathbb{D}_{Q_0}$ and $\kappa:= 1+2\sqrt{n-1}$. Then, for every $t>0$ 
\begin{multline}\label{eq:F-levelsets}
\sup_{Q\in\mathbb{D}_{Q_0}}\frac{1}{|Q|}\bigg|\bigg\{x'\in Q:\,\frac{1}{\varphi(\ell(Q))}
\bigg(\int_0^{\ell(Q)}\int_{|x'-y'|<\kappa s}|F(y',s)|^2\,dy'\frac{ds}{s^n}\bigg)^{1/2}>t\bigg\}\bigg|
\\[4pt]
\leq\frac{1}{\alpha}\,e^{-\tfrac{\log(\alpha^{-1})}{N}\,t}.
\end{multline}

Hence, for each $q\in(0,\infty)$ there exists a finite constant $C=C(\alpha,q)\geq 1$ such that
\begin{equation}\label{eq:appendixA2}
\sup_{Q\in\mathbb{D}_{Q_0}}\frac{1}{\varphi(\ell(Q))}\bigg(\fint_Q\bigg(\int_{0}^{\ell(Q)}
\int_{|x'-y'|<s}|F(y',s)|^2\,dy'\frac{ds}{s^n}\bigg)^{q/2}\,dx'\bigg)^{1/q}\leq CN.
\end{equation}
Moreover, there exists some finite $C=C(\alpha)\geq 1$ such that
\begin{equation}\label{eq:appendixA2-exp}
\sup_{Q\in\mathbb{D}_{Q_0}}\frac{1}{\varphi(\ell(Q))} 
\bigg\|\bigg(\int_{0}^{\ell(Q)}\int_{|\,\cdot\,-y'|<s}|F(y',s)|^2\,dy'\frac{ds}{s^n}\bigg)^{1/2}\bigg\|_{\exp L,Q}\leq CN.
\end{equation}
\end{lemma}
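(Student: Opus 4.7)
The plan is to mimic Calder\'on's classical derivation of the exponential John-Nirenberg inequality, as signaled in the introduction. Once the level-set decay \eqref{eq:F-levelsets} is in hand, the bounds \eqref{eq:appendixA2} and \eqref{eq:appendixA2-exp} follow by routine layer-cake integration: abbreviating $G_Q(x') := \varphi(\ell(Q))^{-1}\bigl(\int_0^{\ell(Q)}\int_{|x'-y'|<\kappa s}|F|^2\,dy'\,ds/s^n\bigr)^{1/2}$, the identity $\fint_Q G_Q^q\,dx' = q\int_0^\infty t^{q-1}|\{G_Q > t\}|/|Q|\,dt$ combined with \eqref{eq:F-levelsets} delivers \eqref{eq:appendixA2}, while for \eqref{eq:appendixA2-exp} I compute $\fint_Q(e^{G_Q/\lambda}-1)\,dx'$ by the same device and select $\lambda = C(\alpha)\,N$ large enough to make the integral $\leq 1$. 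The passage from aperture $\kappa$ on the hypothesis side to aperture $1$ on the conclusion side of \eqref{eq:appendixA2} and \eqref{eq:appendixA2-exp} is automatic since cones of aperture $1$ are contained in cones of aperture $\kappa$.

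For \eqref{eq:F-levelsets} set $\phi(t) := \sup_{Q \in \mathbb{D}_{Q_0}}|E_t^Q|/|Q|$ with $E_t^Q := \{x' \in Q : G_Q(x') > t\}$, so that the hypothesis reads $\phi(N) \leq \alpha$. I target the self-improvement $\phi(t+N) \leq (\alpha/\beta)\,\phi(t)$ for every $t \geq 0$ and a free parameter $\beta \in (\alpha,1)$; iterating produces $\phi(kN) \leq \alpha\,(\alpha/\beta)^{k-1}$, which by monotonicity yields \eqref{eq:F-levelsets} (the advertised decay rate $\log(\alpha^{-1})/N$ being recovered as $\beta \uparrow 1$). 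For the one-step inequality, fix $Q$ and $t$ and perform a Calder\'on-Zygmund stopping on $E_t^Q$: let $\{Q_j\} \subset \mathbb{D}_Q$ be the maximal dyadic subcubes with density $|E_t^Q \cap Q_j|/|Q_j| > \beta$. The standard accounting gives $E_t^Q \subset \bigcup_j Q_j$ a.e., $\sum_j |Q_j| \leq \beta^{-1}|E_t^Q|$, and $|E_t^Q \cap \tilde{Q}_j|/|\tilde{Q}_j| \leq \beta < 1$ on each parent, yielding a \emph{good point} $z'_j \in \tilde{Q}_j$ with $G_Q(z'_j) \leq t$.

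For $x' \in Q_j$ use the pointwise identity $T_Q^\kappa(F)(x')^2 = T_{Q_j}^\kappa(F)(x')^2 + R_j(x')^2$, where $R_j(x')^2 := \int_{\ell(Q_j)}^{\ell(Q)}\int_{|x'-y'|<\kappa s}|F|^2\,dy'\,ds/s^n$. If one establishes the uniform bound $R_j(x') \leq t\,\varphi(\ell(Q))$ for $x' \in Q_j$, then every $x' \in E_{t+N}^Q \cap Q_j$ satisfies $T_{Q_j}^\kappa(F)(x')^2 > \bigl((t+N)^2 - t^2\bigr)\varphi(\ell(Q))^2 \geq N^2\varphi(\ell(Q_j))^2$ (by monotonicity of $\varphi$), placing $x' \in E_N^{Q_j}$. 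The hypothesis applied on $Q_j$ then gives $|E_{t+N}^Q \cap Q_j| \leq \alpha|Q_j|$, and summing over $j$ produces $|E_{t+N}^Q| \leq (\alpha/\beta)|E_t^Q|$, closing the iteration. The specific value $\kappa = 1 + 2\sqrt{n-1}$ is built into the hypothesis precisely to make the geometric transfer underlying the bound on $R_j$ feasible: since $|z'_j - x'| \leq \sqrt{n-1}\,\ell(\tilde{Q}_j) = 2\sqrt{n-1}\,\ell(Q_j) \leq 2\sqrt{n-1}\,s$ for $s \geq \ell(Q_j)$, a cone of aperture $1$ at $x'$ sits inside a cone of aperture $\kappa$ at $z'_j$.

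The technical heart of the plan, and the step I anticipate as the main obstacle, is turning this geometric comparison into the uniform bound $R_j(x') \leq t\,\varphi(\ell(Q))$: the cone appearing in $R_j$ itself has aperture $\kappa$ rather than $1$, and a single translation to $z'_j$ naively produces aperture $\kappa + 2\sqrt{n-1}$, which is strictly larger than $\kappa$. Overcoming this will require a dyadic splitting of the $s$-integral defining $R_j$, paired with a telescoping choice of ancestors of $Q_j$ and their associated good points at each annulus $s \in [2^{k-1}\ell(Q_j), 2^k\ell(Q_j))$, so that the shift-to-aperture budget fits inside $\kappa$ at every level. Executing this bookkeeping so that the collective error aggregates to $t\,\varphi(\ell(Q))$, while keeping the iteration constant $\alpha/\beta$ strictly less than $1$, is the most delicate step.
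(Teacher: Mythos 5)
Your overall architecture --- a self-improving level-set inequality $\phi(t+N)\le(\alpha/\beta)\,\phi(t)$ obtained from a dyadic stopping time with good points in parent cubes, iterated with $\beta\uparrow 1$ and followed by layer-cake integration --- is exactly the Calder\'on-style scheme the paper uses (packaged there as the abstract Proposition~\ref{prop:general-exp-SI} about pairs $G_Q\le H_Q$ with $G_Q(x')\le G_{Q'}(x')+H_Q(y')$ for $x'\in Q'$, $y'\in\widehat{Q'}$). The layer-cake derivations of \eqref{eq:appendixA2} and \eqref{eq:appendixA2-exp} from the level-set decay are routine and correct.

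The gap is the one you flag yourself, and it is not repairable along the lines you sketch. You work throughout with a single functional of aperture $\kappa$, and the step you need is $R_j(x')\le t\,\varphi(\ell(Q))$ for $x'\in Q_j$, obtained by transporting the truncated aperture-$\kappa$ cone at $x'$ to a good point $z_j'$. Any such transport requires $\{|x'-y'|<\kappa s\}\subset\{|z_j'-y'|<\kappa's\}$ with $\kappa'\ge\kappa+|x'-z_j'|/s$, and since a good point is only guaranteed to exist \emph{somewhere} in an ancestor of $Q_j$ of side length comparable to $s$ (it may sit in the far corner), the shift $|x'-z_j'|$ is of order $\sqrt{n-1}\,s$ at every scale, no matter which ancestor or which dyadic annulus in $s$ you choose. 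The telescoping therefore never brings the aperture back down to $\kappa$, while the good point controls nothing of aperture larger than $\kappa$; the bookkeeping cannot close. The paper's resolution is an aperture \emph{asymmetry} that your setup lacks: the level sets being iterated are those of the aperture-$1$ functional $G_Q$, while the hypothesis \eqref{eq:appendixA1} and the good points control the aperture-$\kappa$ functional $H_Q$. Then the tail $\int_{\ell(Q')}^{\ell(Q)}\int_{|x'-z'|<s}$ at $x'\in Q'$ genuinely fits inside the aperture-$\kappa$ cone at any $y'\in\widehat{Q'}$, since $|y'-z'|\le 2\sqrt{n-1}\,\ell(Q')+s\le\kappa s$ for $s\ge\ell(Q')$ --- this is precisely what fixes $\kappa=1+2\sqrt{n-1}$. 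So you should run your iteration on $E^Q_t:=\{G_Q>t\}$ with $G_Q$ of aperture $1$, draw the good points from the complement of $\{H_Q>N\}$ (which has density at least $1-\alpha$ in every dyadic cube by \eqref{eq:appendixA1}), and note that the aperture-$1$ decay is all that \eqref{eq:appendixA2} and \eqref{eq:appendixA2-exp} require; the aperture-$\kappa$ form of \eqref{eq:F-levelsets} is not what this argument produces.
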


\medskip

The previous result can be proved using the arguments in \cite{Hofmann-Mayboroda, HofMarMay} 
with appropriate modifications. Here we present an alternative abstract argument 
based on ideas that go back to Calder\'on, as presented in \cite{Neri} 
(see also \cite{Perez-Paseky, Perez-Lerner}). This also contains as a particular 
case the classical John-Nirenberg result concerning the exponential integrability of {\rm BMO} functions. 

\begin{proposition}\label{prop:general-exp-SI}
Let $Q_0\subset\mathbb{R}^{n-1}$ be an arbitrary half-open cube, or $Q_0=\mathbb{R}^{n-1}$. 
For every $Q\in\mathbb{D}_{Q_0}$ assume that there exist two non-negative functions 
$G_Q,\,H_Q\in L^1_{\loc}(\mathbb{R}^{n-1})$ such that
\begin{equation}\label{eq:FQ-GQ:1}
G_Q(x')\leq H_Q(x')\,\,\text{ for almost every }\,\,x'\in Q,
\end{equation}
and, for every $Q'\in\mathbb{D}_{Q}\setminus\{Q\}$,
\begin{equation}\label{eq:FQ-GQ:2}
G_Q(x')\leq G_{Q'}(x')+H_Q(y')\,\,\text{ for a.e. $x\in Q'$ and for a.e. }\,\,y'\in\widehat{Q'},
\end{equation}
where $\widehat{Q'}$ is the dyadic parent of $Q'$. For each $\alpha\in(0,1)$ define
\begin{equation}\label{eq:defi-mfrak}
\mathfrak{m}_{\alpha}:=\sup_{Q\in\mathbb{D}_{Q_0}}\inf\big\{\lambda>0:\,|\{x'\in Q: H_Q(x')>\lambda\}|
\leq\alpha|Q|\big\}.
\end{equation}

Then, for every $\alpha\in(0,1)$ one has
\begin{equation}\label{eq:JN-point}
\sup_{Q\in\mathbb{D}_{Q_0}}\frac{|\{x\in Q:\,G_Q(x')>t\}|}{|Q|}
\leq\frac{1}{\alpha}\,e^{-\log(\alpha^{-1})\,\tfrac{t}{\mathfrak{m}_{\alpha}}},\qquad\forall\,t>0.
\end{equation}
As a consequence, 
\begin{equation}\label{eq:exp-L-SI}
\sup_{Q\in\mathbb{D}_{Q_0}}\|G_Q\|_{\exp L,Q}\leq\frac{1+\alpha^{-1}}{\log (\alpha^{-1})}\,\mathfrak{m}_{\alpha}
\end{equation}
and for every $q\in(0,\infty)$ there exists a finite constant $C=C(q)>0$ such that
\begin{equation}\label{eq:Lq-SI}
\sup_{Q\in\mathbb{D}_{Q_0}}\Big(\fint_{Q} G_Q(x')^q\,dx'\Big)^{1/q}
\leq C_q\frac{1}{\alpha^{1/q}\log (\alpha^{-1})}\,\mathfrak{m}_{\alpha}.
\end{equation}
\end{proposition}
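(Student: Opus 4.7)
The plan is to first establish the pointwise level-set bound \eqref{eq:JN-point} by an iterative Calder\'on-style argument, and then deduce \eqref{eq:Lq-SI} and \eqref{eq:exp-L-SI} by routine layer-cake computations. Introduce the distribution function
\[
\phi(t):=\sup_{Q\in\mathbb{D}_{Q_0}}\frac{|\{x'\in Q:\,G_Q(x')>t\}|}{|Q|},\qquad t>0,
\]
which is trivially bounded by $1$. The heart of the argument is the \emph{self-improvement inequality}
\[
\phi(t+\lambda)\le\alpha\,\phi(t)\qquad\text{for every }t>0\text{ and every }\lambda>\mathfrak{m}_\alpha.
\]

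To prove this, fix $Q\in\mathbb{D}_{Q_0}$, $\lambda>\mathfrak{m}_\alpha$, and an auxiliary parameter $\beta\in(\alpha,1)$. Since $|\{H_Q>\lambda\}\cap Q|\le\alpha|Q|<\beta|Q|$ by the definition of $\mathfrak{m}_\alpha$, a dyadic Calder\'on--Zygmund stopping on $\mathbf{1}_{\{H_Q>\lambda\}}$ at level $\beta$ inside $Q$ produces a (possibly empty) family $\{Q_j\}_j$ of pairwise disjoint maximal dyadic subcubes of $Q$ satisfying $|Q_j\cap\{H_Q>\lambda\}|>\beta|Q_j|$. Three standard consequences follow. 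First, $\sum_j|Q_j|\le\beta^{-1}|\{H_Q>\lambda\}\cap Q|\le(\alpha/\beta)|Q|$. Second, by maximality the dyadic parent $\widehat{Q_j}$ satisfies $|\widehat{Q_j}\cap\{H_Q\le\lambda\}|\ge(1-\beta)|\widehat{Q_j}|>0$; hence, by \eqref{eq:FQ-GQ:2}, one may select $y'\in\widehat{Q_j}$ with $H_Q(y')\le\lambda$ and conclude that $G_Q(x')\le G_{Q_j}(x')+\lambda$ for a.e.\ $x'\in Q_j$. Third, the Lebesgue differentiation theorem forces $H_Q\le\lambda$ a.e.\ on $Q\setminus\bigcup_j Q_j$, which combined with \eqref{eq:FQ-GQ:1} yields $G_Q\le\lambda$ a.e.\ there. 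Assembling these pieces,
\[
|\{G_Q>t+\lambda\}\cap Q|\le\sum_j|\{G_{Q_j}>t\}\cap Q_j|\le\phi(t)\sum_j|Q_j|\le(\alpha/\beta)\,\phi(t)\,|Q|,
\]
and passing to the supremum over $Q$ and then letting $\beta\uparrow 1$ proves the self-improvement.

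Iterating produces $\phi(k\lambda)\le\alpha^k$ for every $k\in\mathbb{N}\cup\{0\}$. For general $t>0$, write $t=k\lambda+r$ with $0\le r<\lambda$ and use monotonicity: $\phi(t)\le\phi(k\lambda)\le\alpha^k\le\alpha^{-1}\alpha^{t/\lambda}=\alpha^{-1}e^{-\log(\alpha^{-1})\,t/\lambda}$; letting $\lambda\downarrow\mathfrak{m}_\alpha$ yields \eqref{eq:JN-point}. From here \eqref{eq:Lq-SI} follows via $\fint_Q G_Q^q\,dx'=q\int_0^\infty t^{q-1}\,|\{G_Q>t\}\cap Q|/|Q|\,dt$ and an explicit gamma-type integral. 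For \eqref{eq:exp-L-SI} one rewrites $\fint_Q (e^{G_Q/t}-1)\,dx'$ as a Laplace-type integral of $e^{s/t}$ against the distribution function of $G_Q$, splits the integration at the critical point $s=\mathfrak{m}_\alpha$ where the bound $\min\{1,\alpha^{-1}e^{-\log(\alpha^{-1})s/\mathfrak{m}_\alpha}\}$ changes regime, and checks directly that the particular choice $t=(1+\alpha^{-1})\mathfrak{m}_\alpha/\log(\alpha^{-1})$ renders the resulting expression $\le 1$.

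The main obstacle is the second bullet in the Calder\'on--Zygmund step: hypothesis \eqref{eq:FQ-GQ:2} is formulated on the \emph{parent} $\widehat{Q_j}$ rather than on $Q_j$ itself, so a naive stopping performed at level $\alpha$ would leave no room to locate a point $y'\in\widehat{Q_j}$ with $H_Q(y')\le\lambda$. Introducing the auxiliary parameter $\beta\in(\alpha,1)$ creates positive-measure slack in $\widehat{Q_j}\cap\{H_Q\le\lambda\}$---this is precisely the mechanism by which \eqref{eq:FQ-GQ:2} is fed into the iteration---and the associated loss $\alpha/\beta$ is then absorbed harmlessly by sending $\beta\uparrow 1$ once the estimate for $\phi(t+\lambda)$ has been secured.
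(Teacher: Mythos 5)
Your argument is correct and coincides with the paper's own proof in all essentials: the same dyadic Calder\'on--Zygmund stopping on the level set $\{H_Q>\lambda\}$ at an auxiliary height $\beta\in(\alpha,1)$, the same use of the positive-measure slack $(1-\beta)|\widehat{Q_j}|$ in the parent cube to feed \eqref{eq:FQ-GQ:2} into the iteration, the same limit $\beta\uparrow1$ and geometric iteration yielding \eqref{eq:JN-point}, and the same layer-cake computations for \eqref{eq:exp-L-SI} and \eqref{eq:Lq-SI}. The only cosmetic differences (iterating $\phi(t+k\lambda)\le\alpha^k\phi(t)$ for $t>0$ rather than literally $\phi(k\lambda)\le\alpha^k$, and an unnecessary splitting of the $\exp L$ integral at $s=\mathfrak{m}_\alpha$) are harmless.
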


\medskip

Before proving this result and Lemma~\ref{lem:appendix}, let us illustrate how 
Proposition~\ref{prop:general-exp-SI} yields the classical John-Nirenberg result regarding the 
exponential integrability of {\rm BMO} functions. Concretely, pick $f\in\BMO(\mathbb{R}^{n-1})$. 
Fix an arbitrary cube $Q_0$ and for every $Q\in\mathbb{D}_{Q_0}$ define $G_Q:=|f-f_Q|$ and 
$H_Q:=2^{n-1}M^d_Q\big(|f-f_Q|\big)$ (cf. \eqref{eq:def-Md}).
Clearly, \eqref{eq:FQ-GQ:1} holds by Lebesgue's Differentiation Theorem. 
Moreover, for every $Q'\in\mathbb{D}_{Q}\setminus Q$, $x'\in Q'$, and $y'\in\widehat{Q'}$ we have
\begin{multline}\label{5f3r55}
G_Q(x')\leq|f(x')-f_{Q'}|+|f_Q'-f_Q|\leq G_{Q'}(x')+\fint_{Q'}|f(z')-f_Q|\,dz'
\\[4pt]
\leq G_{Q'}(x')+2^{n-1}\fint_{\widehat{Q'}}|f(z')-f_Q|\,dz'
\leq G_{Q'}(x')+H_{Q'}(y'),
\end{multline}
and \eqref{eq:FQ-GQ:2} follows. Going further, by the weak-type $(1,1)$ of the 
dyadic Hardy-Littlewood maximal function, for every $\lambda>0$ we may write 
\begin{equation}\label{6h6g6}
|\{x'\in Q:\,H_Q(x')>\lambda\}|\leq\frac{2^{n-1}}{\lambda}\int_{Q}|f(y')-f_Q|\,dy'
\leq\frac{2^{n-1}\|f\|_{\BMO(\mathbb{R}^{n-1})}}{\lambda}|Q|.
\end{equation}
In particular, choosing for instance $\alpha:=e^{-1}$, if we use the previous 
estimate with $\lambda:=2^{n-1}\|f\|_{\BMO(\mathbb{R}^{n-1})}/\alpha$ we obtain 
$\mathfrak{m}_\alpha\le 2^{n-1}\|f\|_{\BMO(\mathbb{R}^{n-1})}/\alpha$. 
Thus, \eqref{eq:JN-point} yields 
\begin{align}\label{6g444}
\frac{|\{x\in Q_0:\,|f(x')-f_{Q_0}|>t\}|}{|Q_0|}
&\leq\frac1\alpha\,e^{-\tfrac{\alpha\log(\alpha^{-1})}{2^{n-1}}\tfrac{t}{\|f\|_{\BMO(\mathbb{R}^{n-1})}}}
\nonumber\\[6pt]
&=e\cdot e^{-\tfrac1 {2^{n-1}e}\tfrac{t}{\|f\|_{\BMO(\mathbb{R}^{n-1})}}}
\end{align}
while \eqref{eq:exp-L-SI} gives 
\begin{equation}\label{63222}
\|f-f_{Q_0}\|_{\exp L,Q_0}\leq(1+e)\,e\,2^{n-1}\|f\|_{\BMO(\mathbb{R}^{n-1})}
\end{equation}
which are the well-known John-Nirenberg inequalities. 

We now turn to the proof of Lemma~\ref{lem:appendix}. 

\vskip 0.08in
\begin{proof}[Proof of Lemma~\ref{lem:appendix}]
Let $F$, $\alpha$, and $N$ be fixed as in the statement of the lemma.
For every $Q\in\mathbb{D}_{Q_0}$ and $x' \in \mathbb{R}^{n-1}$,
define 
\begin{equation}\label{53222}
G_Q(x'):=\frac{1}{\varphi(\ell(Q))} \Big( \int_{0}^{\ell(Q)}
\int_{|x'-z'|<s}|F(z',s)|^2\,dz'\frac{ds}{s^n} \Big)^{1/2}
\end{equation}
and
\begin{equation}
H_Q(x')=\frac{1}{\varphi(\ell(Q))} \Big(
\int_{0}^{\ell(Q)}\int_{|x'-z'|<\kappa s}|F(z',s)|^2\,dz'\frac{ds}{s^n} \Big)^{1/2}. 
\end{equation}
Note that \eqref{eq:FQ-GQ:1} is trivially verified since $\kappa>1$. To proceed, fix  
$Q'\in\mathbb{D}_{Q}$ along with $x'\in Q'$ and $y'\in\widehat{Q'}$. If $|x'-z'|<s$ 
with $\ell(Q')\leq s\le\ell(Q)$ then 
\begin{equation}\label{g5FFF}
|y'-z'|\leq|y'-x'|+|x'-z'|<2\sqrt{n-1}\,\ell(Q')+s\leq\kappa s.
\end{equation}
Therefore, since $\varphi$ is non-decreasing,
\begin{multline}\label{522rFF}
G_Q(x')\leq\frac{\varphi(\ell(Q'))}{\varphi(\ell(Q))}G_{Q'}(x')
+\frac{1}{\varphi(\ell(Q))} 
\Big( \int_{\ell(Q')}^{\ell(Q)}\int_{|x'-z'|<s}|F(z',s)|^2\,dz'\frac{ds}{s^n} \Big)^{1/2}
\\[4pt]
\leq G_{Q'}(x')+H_Q(y'),
\end{multline}
establishing \eqref{eq:FQ-GQ:2}. Moreover, \eqref{eq:appendixA1} gives immediately that 
$\mathfrak{m}_\alpha\le N$. Granted this, \eqref{eq:JN-point}, \eqref{eq:Lq-SI}, and \eqref{eq:exp-L-SI}, 
(with $\alpha\in(0,1)$ given by \eqref{eq:appendixA1}) prove, 
respectively \eqref{eq:F-levelsets}, \eqref{eq:appendixA2}, and \eqref{eq:appendixA2-exp}. 
\end{proof}

Finally, we give the proof of Proposition~\ref{prop:general-exp-SI}.

\vskip 0.08in
\begin{proof}[Proof of Proposition~\ref{prop:general-exp-SI}]
We start by introducing some notation. Set
\begin{equation}
\Xi(t):=\sup_{Q\in\mathbb{D}_{Q_0}}\frac{|E_Q(t)|}{|Q|}
:=\sup_{Q\in\mathbb{D}_{Q_0}}\frac{|\{x'\in Q: G_Q(x')>t\}|}{|Q|},\qquad 0<t<\infty.
\end{equation}
Fix $\alpha\in(0,1)$, let $\varepsilon>0$ be arbitrary, and write 
$\lambda_\varepsilon=\mathfrak{m}_\alpha+\varepsilon$. From \eqref{eq:defi-mfrak} it follows that
\begin{equation}\label{eq:esti-unif-level-sets}
|F_{Q,\varepsilon}|:=|\{x'\in Q:\,H_Q(x')>\lambda_\varepsilon\}|\leq\alpha|Q|,
\qquad\forall\,Q\in\mathbb{D}_{Q_0}.
\end{equation}
Fix now $Q\in\mathbb{D}_{Q_0}$, $\beta\in(\alpha,1)$ 
(we will eventually let $\beta \to 1^{+})$ and
set
\begin{equation}\label{643rUHG}
\Omega_Q:=\{x'\in Q:\,M_Q^d(1_{F_{Q,\varepsilon}})(x')>\beta\}.
\end{equation}
Note that \eqref{eq:esti-unif-level-sets} ensures that 
\begin{equation}\label{6f4rdDD}
\fint_Q 1_{F_{Q,\varepsilon}}(y')\,dy'=\frac{|F_{Q,\varepsilon}|}{|Q|}\leq\alpha<\beta,
\end{equation}
hence we can extract a family of pairwise disjoint stopping-time cubes 
$\{Q_j\}_j\subset\mathbb{D}_{Q}\setminus\{Q\}$ so that $\Omega_Q=\cup_j Q_j$ and for every $j$
\begin{equation}\label{eq:stopping-time}
\frac{|F_{Q,\varepsilon}\cap Q_j|}{|Q_j|}>\beta,
\quad\frac{|F_{Q,\varepsilon}\cap Q'|}{|Q'|}\leq\beta,
\quad Q_j\subsetneq Q'\in\mathbb{D}_Q.
\end{equation}

Let $t>\lambda_\varepsilon$ and note that \eqref{eq:FQ-GQ:1} gives
\begin{equation}\label{64321-AA}
\lambda_\varepsilon<t<G_Q(x')\leq H_Q(x')\,\,\text{ for a.e. }\,\,x'\in E_{Q}(t).
\end{equation}
which implies that
\begin{equation}\label{tdr-aafDD}
\beta<1={\mathbf{1}}_{F_{Q,\varepsilon}}(x')\leq M_Q^d\big({\mathbf{1}}_{F_{Q,\varepsilon}}\big)(x')
\,\,\text{ for a.e. }\,\,x'\in E_{Q}(t).
\end{equation}
Hence,
\begin{equation}\label{6tfFDdad}
|E_Q(t)|=|E_Q(t)\cap\Omega_Q|=\sum_j |E_Q(t)\cap Q_j|.
\end{equation}
For every $j$, by the second estimate in \eqref{eq:stopping-time} 
applied to $\widehat{Q}_j$, the dyadic parent of $Q_j$, we have  
$|F_{Q,\varepsilon}\cap\widehat{Q}_j|/|\widehat{Q}_j|\leq\beta<1$, therefore 
$|\widehat{Q}_j\setminus F_{Q,\varepsilon}|/|\widehat{Q}_j|>1-\beta>0$. 
In particular, \eqref{eq:FQ-GQ:2} guarantees that we can find 
$\widehat{x}_j'\in\widehat{Q}_j\setminus F_{Q,\varepsilon}$, such that for a.e. $x'\in Q_j$ we have
\begin{align}\label{6t4-EE}
G_Q(x')\leq G_{Q_j}(x')+ H_Q(\widehat{x}_j')\leq G_{Q_j}(x')+\lambda_\varepsilon.
\end{align}
Consequently, $G_{Q_j}(x')>t-\lambda_\varepsilon$ for a.e. $x'\in E_Q(t)\cap Q_j$ which further implies
\begin{equation}\label{7543EDa}
|E_Q(t)\cap Q_j|\leq|\{x'\in Q_j:\,G_{Q_j}(x')>t-\lambda_\varepsilon\}|
\leq\Xi(t-\lambda_\varepsilon)|Q_j|.
\end{equation}
In turn, this permits us to estimate
\begin{multline}
|E_Q(t)|=\sum_j|E_Q(t)\cap Q_j|\leq\Xi(t-\lambda_\varepsilon)\sum_j|Q_j|
\leq\Xi(t-\lambda_\varepsilon)\frac1{\beta}\sum_j |F_{Q,\varepsilon}\cap Q_j|
\\[4pt]
\leq\Xi(t-\lambda_\varepsilon)\frac1\beta|F_{Q,\varepsilon}|
\leq\Xi(t-\lambda_\varepsilon)\frac\alpha\beta|Q|,
\end{multline}
where we have used \eqref{eq:stopping-time}, that the cubes $\{Q_j\}_j$ are pairwise 
disjoint and, finally, \eqref{eq:esti-unif-level-sets}. Dividing by $|Q|$ and taking the 
supremum over all $Q\in\mathbb{D}_{Q_0}$ we arrive at 
\begin{equation}\label{6tgEDac}
\Xi(t)\leq\frac\alpha\beta\Xi(t-\lambda_\varepsilon),\qquad t>\lambda_\varepsilon. 
\end{equation}
Since this is valid for all $\beta \in (\alpha,1)$, we can now let $\beta\to 1^{+}$, iterate the previous expression, and use 
the fact that $\Xi(t)\leq 1$ to conclude that
\begin{equation}\label{RDDSS}
\Xi(t)\leq\frac1{\alpha}\alpha^{\frac{t}{\lambda_\varepsilon}}
=\frac{1}{\alpha}\,e^{-\log(\alpha^{-1})\,\frac{t}{\lambda_\varepsilon}},\qquad t>0. 
\end{equation}
Recalling that $\lambda_\varepsilon=\mathfrak{m}_\alpha+\varepsilon$ and letting $\varepsilon\to 0^{+}$ 
establishes \eqref{eq:JN-point}. 

We shall next indicate how \eqref{eq:JN-point} implies \eqref{eq:exp-L-SI}. Concretely, if we take 
$t:=\frac{1+\alpha^{-1}}{\log(\alpha^{-1})}\,\mathfrak{m}_{\alpha}$ we see that 
\eqref{eq:JN-point} gives
\begin{multline}\label{y54fDXa}
\fint_{Q}\Big(e^{\frac{G_Q(x')}{t}}-1\Big)\,dx'
=\int_0^\infty\frac{|\{x'\in Q:\,G_Q(x')/t>\lambda\}|}{|Q|}e^{\lambda}\,d\lambda
\\[4pt]
\leq\frac{1}{\alpha}\int_0^\infty e^{-\log(\alpha^{-1})\,\frac{\lambda t}{\mathfrak{m}_{\alpha}}} 
e^{\lambda}\,d\lambda=\frac{1}{\alpha}\int_0^\infty e^{-\alpha^{-1}\lambda}\,d\lambda=1.
\end{multline}
With this in hand, \eqref{eq:exp-L-SI} follows with the help of \eqref{eq:Lux-norm}.

At this stage, there remains to justify \eqref{eq:Lq-SI}. This can be done invoking 
again \eqref{eq:JN-point}:
\begin{multline}\label{6rRD}
\fint_{Q} G_Q(x')^q\,dx'=\int_0^\infty\frac{|\{x'\in Q:\,G_Q(x')>\lambda\}|}{|Q|}\,q\,\lambda^q\,\frac{d\lambda}{\lambda}
\leq\frac{1}{\alpha}\int_0^\infty e^{-\log(\alpha^{-1})\,\frac{\lambda}{\mathfrak{m}_{\alpha}}}\,q\,\lambda^q\,
\frac{d\lambda}{\lambda}
\\[4pt]
=\frac{1}{\alpha}\left(\frac{\mathfrak{m}_{\alpha}}{\log(\alpha^{-1})}\right)^q
\int_0^\infty e^{-\lambda}\,q\,\lambda^q\,\frac{d\lambda}{\lambda}
=C_q\frac1{\alpha}\left(\frac{\mathfrak{m}_{\alpha}}{\log(\alpha^{-1})}\right)^q.
\end{multline}
This completes the proof of Proposition~\ref{prop:general-exp-SI}.
\end{proof}

%
  

\def\cprime{$'$} \def\cprime{$'$} \def\cprime{$'$} \def\cprime{$'$}

\end{document}